\newcommand{\id}{\operatorname{id}}
\newcommand{\im}{\operatorname{im}}
\newcommand{\ob}{\operatorname{ob}}
\newcommand{\pr}{\operatorname{pr}}
\newcommand{\ev}{\operatorname{ev}}
\newcommand{\adj}{\operatorname{adj}}
\newcommand{\res}{\operatorname{res}}
\newcommand{\Hom}{\operatorname{Hom}}
\newcommand{\End}{\operatorname{End}}
\newcommand{\Ext}{\operatorname{Ext}}
\newcommand{\Mod}{\operatorname{Mod}}
\newcommand{\ind}{\operatorname{ind}}
\newcommand{\QCoh}{\operatorname{QCoh}}
\def\cal@symb#1|#2{\expandafter\def\csname #2#1\endcsname{\mathcal{#1}}}
\def\calsymbols#1#2{\@for\@tmpz:=#2\do{\expandafter\cal@symb\@tmpz|{#1}}}
\def\b@symb#1|#2{\expandafter\def\csname #2#1\endcsname{\mathbf{#1}}}
\def\bsymbols#1#2{\@for\@tmpz:=#2\do{\expandafter\b@symb\@tmpz|{#1}}}
\def\bb@symb#1|#2{\expandafter\def\csname #2#1\endcsname{\mathbb{#1}}}
\def\bbsymbols#1#2{\@for\@tmpz:=#2\do{\expandafter\b@symb\@tmpz|{#1}}}
\def\frak@symb#1|#2{\expandafter\def\csname #2#1\endcsname{\mathfrak{#1}}}
\def\fraksymbols#1#2{\@for\@tmpz:=#2\do{\expandafter\frak@symb\@tmpz|{#1}}}
\theoremstyle{plain}
\newtheorem{theorem}{Theorem}[section]
\newtheorem{corollary}[theorem]{Corollary}
\newtheorem{lemma}[theorem]{Lemma}
\newtheorem{proposition}[theorem]{Proposition}
\newtheorem{definition}[theorem]{Definition}
\theoremstyle{remark}
\newtheorem{remark}[theorem]{Remark}
\title{\textbf{The central sheaf of a Grothendieck category}}
\author{Konstantin Ardakov, Peter Schneider}
\date{\today}
\address{Mathematical Institute, University of Oxford, Oxford OX2 6GG}
\email{ardakov@maths.ox.ac.uk}
\urladdr{http://people.maths.ox.ac.uk/ardakov/}
\address{ Universit\"at M\"unster,  Mathematisches Institut,  Einsteinstr. 62, 48291 M\"unster, Germany}
\email{pschnei@wwu.de}
\urladdr{https://www.uni-muenster.de/Arithm/schneider/}
\begin{document}
\begin{abstract} The center $Z(\cA)$ of an abelian category $\cA$ is the endomorphism ring of the identity functor on that category. A localizing subcategory of a Grothendieck category $\cC$ is said to be \emph{stable} if it is stable under essential extensions. The set $\mathbf{L}^{st}(\cC)$ of stable localizing subcategories of $\cC$ is partially ordered under reverse inclusion. We show $\cL \mapsto Z(\cC/\cL)$ defines a sheaf of commutative rings on $\mathbf{L}^{st}(\cC)$ with respect to finite coverings. When $\cC$ is assumed to be locally noetherian, we also show that the sheaf condition holds for arbitrary coverings.
\end{abstract}

\maketitle

\section{Introduction}

\subsection{Background}\label{sec:bgnd}
In his thesis \cite{Gab}, Gabriel proved that a noetherian scheme $X$ can be reconstructed from the category of quasi-coherent sheaves $\QCoh(X)$ on $X$. This category is an example of a \emph{Grothendieck category}: it is abelian, it has small coproducts (hence small colimits) such that filtered colimits are exact, and it has a generator. It also enjoys two additional properties: it is locally noetherian, and every localizing subcategory is stable.

To prove his reconstruction theorem, Gabriel introduces the injective spectrum $\mathbf{Sp}(\cC)$ of any Grothendieck category $\cC$ (see $\S \ref{sec:StabMain}$ below for more details) and shows that for $\cC = \QCoh(X)$, this spectrum is in a natural bijection with $X$. The open subsets $U$ of the scheme $X$, through their images in $\mathbf{Sp}(\QCoh(X))$, give rise to certain localizing subcategories $\cL_U$ of $\QCoh(X)$. Then he shows that the ring of sections $\mathcal{O}_X(U)$ of the structure sheaf of the scheme $X$ over $U$ is naturally isomorphic to the center $Z(\QCoh(X)/\cL_U)$ of the quotient category $\QCoh(X)/\cL_U$. Gabriel's theorem was later generalised to arbitrary quasi-separated schemes (\cite{Ros}, \cite{Bra}).

In this paper, motivated by considerations arising from the mod-$p$ representation theory of $p$-adic reductive groups, we generalise Gabriel's approach to the construction of a `central' sheaf for any Grothendieck category $\cC$. There exist fairly natural non-trivial examples of Grothendieck categories $\cC$ where $\mathbf{Sp}(\cC)$ is the empty set, the so-called \emph{continuous spectral categories}, (cf. \cite{Pop} p. 324). Therefore it seems appropriate to take the viewpoint of `pointless topology' and try to work with the presheaf of centers $\cL \mapsto Z(\cC/\cL)$ directly.

\subsection{Stability and main results}\label{sec:StabMain}

Taking cue again from our primary example of interest, namely the category $\Mod(G)$ of smooth $k$-linear representations of a connected reductive $p$-adic group $G$ with coefficients in a field $k$, we note that although $\Mod(G)$ definitely exhibits some serious non-commutative behaviour, this category appears to be nevertheless closer to being `commutative' than, say, the category of modules over a complicated non-commutative ring such as a Weyl algebra. To support this intuition, note that at least when $k = \mathbb{C}$ is the field of complex numbers, Bernstein's famous results about the structure of $\Mod(G)$ (cf.\ \cite{Ber}) imply that the connected components of this category are Morita equivalent to algebras that are finitely generated modules over their noetherian centers.

For a general non-commutative ring $A$ the localizing subcategories (or the hereditary torsion theories, \cite{Go2}, \cite{Ste}) of the Grothendieck category $\Mod(A)$ can exhibit a rather wild behaviour. The basic outcome of this paper is the observation that, at least as far as the behaviour of centers is concerned, the situation improves considerably if we restrict our attention only to the stable localizing subcategories. We view the set $\mathbf{L}^{st}(\cC)$ of stable localizing subcategories of $\cC$ as a category with objects being the elements of $\mathbf{L}^{st}(\cC)$ and where the morphisms $\cL_2 \rightarrow \cL_1$ are given by the inclusions $\cL_1 \subseteq \cL_2$. For $\cL$ and $\cL_1, \ldots, \cL_n$ in $\mathbf{L}^{st}(\cC)$ we call $\{\cL_i\}_{1 \leq i \leq n}$ a \emph{covering} of $\cL$ if $\cL = \bigcap_i \cL_i$. Our first main result reads as follows.

\begin{theorem}\label{thm:intro-central-sheaf} For any Grothendieck category $\cC$,  $Z_\cC(\cL) := Z(\cC/\cL)$ is a sheaf on $\mathbf{L}^{st}(\cC)$.
\end{theorem}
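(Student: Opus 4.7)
Unfolding the sheaf condition, for a finite cover $\cL=\bigcap_{i=1}^n\cL_i$ in $\mathbf{L}^{st}(\cC)$ one must show that
\[
Z(\cC/\cL)\;\longrightarrow\;\prod_{i}Z(\cC/\cL_{i})\;\rightrightarrows\;\prod_{i,j}Z(\cC/\cL_{ij})
\]
is an equalizer of commutative rings, where $\cL_{ij}$ denotes the categorical product of $\cL_i,\cL_j$ in $\mathbf{L}^{st}(\cC)$ --- concretely, the smallest stable localizing subcategory containing $\cL_i\cup\cL_j$, which exists because intersections of stable localizing subcategories are again stable localizing. Replacing $\cC$ by $\cC/\cL$ reduces to the case $\cL=0$, so that $\bigcap_i\cL_i=0$.

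\emph{Separation.} Suppose $\alpha\in Z(\cC)$ restricts to zero in every $Z(\cC/\cL_i)$. Then for each $X\in\cC$ the endomorphism $\alpha_X$ is killed by the quotient $Q_i\colon\cC\to\cC/\cL_i$; since $Q_i$ is exact and annihilates precisely the objects of $\cL_i$, this is equivalent to $\im(\alpha_X)\in\cL_i$. Thus $\im(\alpha_X)\in\bigcap_i\cL_i=0$, so $\alpha_X=0$ for all $X$ and $\alpha=0$.

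\emph{Gluing.} For this the stability hypothesis is essential. Fix an injective $E\in\cC$ and any $i$. Stability forces the injective hull of $\tau_i(E)$ to remain in $\cL_i$, and since $\tau_i(E)$ is the largest subobject of $E$ lying in $\cL_i$, this forces $\tau_i(E)$ itself to be injective and hence a direct summand of $E$. Since $\tau_i\tau_j(E)\in\cL_i\cap\cL_j=0$ for $i\neq j$, the injective subobjects $\tau_1(E),\dots,\tau_n(E)$ pairwise meet in zero, so their sum is internally direct; being injective, it splits off, giving $E=\bigoplus_i\tau_i(E)\oplus G$ for an injective $G$ that is $\cL_j$-torsion-free for every $j$. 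A short argument using the filtration description of objects in a join of localizing subcategories shows that $G$ is even $\cL_{jk}$-torsion-free, hence $\cL_{jk}$-closed, and so lies in each $\cC/\cL_{jk}$; similarly each summand $\tau_i(E)$ lies in $\cC/\cL_{jk}$ for all $j,k\neq i$. Given a compatible family $(\alpha_i)$, the overlap condition means $\alpha_j$ and $\alpha_k$ restrict to identical endomorphisms on every object of $\cC/\cL_{jk}$, so I may unambiguously define $\alpha_E\in\End(E)$ by acting with any $\alpha_j$ ($j\neq i$) on the summand $\tau_i(E)$ and with any $\alpha_j$ on $G$.

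It remains to verify that $\alpha_E$ is independent of the chosen complement $G$, is natural in $E$, and extends to all objects. Functoriality of each $\tau_i$ ensures that morphisms between injectives respect the canonical pieces, while recasting the $G$-contribution via the canonical quotient $E\big/\bigoplus_i\tau_i(E)$ should address both the choice issue and naturality simultaneously. For an arbitrary $X\in\cC$ I would then choose an injective copresentation $0\to X\to E\to E'$; the composite $X\hookrightarrow E\xrightarrow{\alpha_E}E\to E'$ equals $\alpha_{E'}\circ 0=0$ by naturality, so $\alpha_E$ restricts to a well-defined $\alpha_X\in\End(X)$, producing the sought central element. The main obstacle I expect is exactly this well-definedness and naturality step: the complement $G$ is only canonical up to choice, and one must verify that the piecewise construction assembles into a genuine natural transformation of $\id_\cC$ using only the double-overlap compatibility --- which is all that is available in a poset topology, where no higher fiber products intervene, but which must be used carefully along every triple $\cL_i,\cL_j,\cL_k$ entering the torsion decomposition of $E$.
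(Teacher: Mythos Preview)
Your argument contains a concrete error in the gluing step: after reducing to $\bigcap_{i=1}^n \cL_i = 0$ you assert that $\tau_i\tau_j(E)\in\cL_i\cap\cL_j=0$ for $i\neq j$, but the vanishing of the full intersection does not force the \emph{pairwise} intersections $\cL_i\cap\cL_j$ to vanish once $n\geq 3$. Consequently your direct-sum decomposition $E=\bigoplus_i\tau_i(E)\oplus G$ is not available, and the piecewise definition of $\alpha_E$ collapses. The paper avoids this by first reducing to $n=2$ via induction; note that this reduction already needs the distributivity of the lattice $\mathbf{L}(\cC)$ (Prop.~\ref{prop:distributive}), which you do not invoke, in order to identify the overlap $\cL_1\vee(\cL_2\cap\cdots\cap\cL_n)$ with $\bigcap_{j\geq 2}(\cL_1\vee\cL_j)$.

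Even restricted to $n=2$, your route is genuinely different from the paper's and is left incomplete precisely where the real work lies. The paper proceeds categorically: it shows that the natural functor $R\colon \cC/(\cL_0\cap\cL_1)\to (\cC/\cL_0)\prod_{\cC/(\cL_0\vee\cL_1)}(\cC/\cL_1)$ into Gabriel's recollement is an \emph{equivalence} when both $\cL_i$ are stable (Prop.~\ref{prop:R-equiv}), and then quotes Gabriel's computation of the center of a recollement to get the equalizer diagram for free. This packages your ``naturality and well-definedness'' worries into a single clean categorical statement, and the stability hypothesis enters exactly once, through the key Lemma~\ref{lem:Ugeneral} controlling the derived section functors. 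Your hands-on approach via the splitting $E=\tau_1(E)\oplus\tau_2(E)\oplus G$ is in spirit the same decomposition underlying Cor.~\ref{cor:stableL3} and Cor.~\ref{cor: t123}, and could in principle be pushed through for $n=2$; but the verification you flag as ``the main obstacle'' --- that the piecewise $\alpha_E$ is natural in $E$, independent of the complement $G$, and compatible with cross-terms $\Hom(\tau_1(E),G)$, $\Hom(\tau_2(E),G)$ --- is exactly what the recollement equivalence encodes and is not a formality.
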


This result will be a consequence of the following purely categorical statement, which we prove in Prop.\ \ref{prop:R-equiv}. We first recall that Gabriel (cf. \cite{Gab} p. 439) introduced for any diagram of exact functors between abelian categories
\begin{equation*}
  \xymatrix@R=0.5cm{
  \cA_0  \ar[dr]             \\
                & \cA          \\
  \cA_1  \ar[ur]               }
\end{equation*}
the category $\cA_0\, \underset{\cA}{\prod}\, A_1$ which he calls the \emph{recollement of $\cA_0$ and $\cA_1$ along $\cA$}. We apply this with $\cA_i := \cC/\cL_i$ and $\cA := \cC/(\cL_0 \vee \cL_1)$ and the corresponding quotient functors, where $\cL_0$ and $\cL_1$ are two localizing subcategories of our Grothendieck category $\cC$ and $\cL_0 \vee \cL_1$ denotes the smallest localizing subcategory which contains $\cL_0$ and $\cL_1$. It will be easy to see that the corresponding recollement receives a natural functor
\begin{equation*}
  \cC/(\cL_0 \cap \cL_1) \longrightarrow (\cC/\cL_0) \underset{\cC / (\cL_0 \vee \cL_1)}{\prod}{} (\cC/\cL_1) \ .
\end{equation*}
Our main result is that this functor is an equivalence provided $\cL_0$ and $\cL_1$ are \textit{stable}.

The restriction to the case of finite coverings in Thm.\ \ref{thm:intro-central-sheaf} can be removed if we impose a finiteness condition on the Grothendieck category $\cC$ as follows.

Recall that the injective spectrum $\mathbf{Sp}(\cC)$ of \emph{any} Grothendieck category $\cC$ is the set of isomorphism classes of indecomposable injective objects of $\cC$ (cf.\ \cite{Pop} p.\ 331). If $\cL$ is a localizing subcategory of $\cC$, then an object of $\cC$ is said to be \emph{$\cL$-torsion-free} if it has no non-zero subobjects in $\cL$, and the subset $A(\cL) \subseteq \mathbf{Sp}(\cC)$ is by definition the set of isomorphism classes of indecomposable $\cL$-torsion-free injective objects. A subset $A \subseteq \mathbf{Sp}(\cC)$ will be called \emph{stable} if it is of the form $A(\cL)$ for some stable localizing subcategory $\cL$ of $\cC$.

Recall that $\cC$ is said to be \emph{locally noetherian} if it admits a set of noetherian generators. The work of Gabriel \cite{Gab} implies that when $\cC$ is locally noetherian, the map $\cL \mapsto A(\cL)$ gives a bijection between the set $\mathbf{L}^{st}(\cC)$ and the set of stable subsets of $\mathbf{Sp}(\cC)$. It is easy to see that the stable subsets form a topology on $\mathbf{Sp}(\cC)$ that we call the \emph{stable topology}. For any stable subset $A \subseteq \mathbf{Sp}(\cC)$, let $\cL_A$ denote the corresponding stable localizing subcategory of $\cC$ such that $A(\cL_A) = A$. The map $A \mapsto \cL_A$ is inclusion reversing. Hence, for stable subsets $A_1 \subseteq A_2$, we have a quotient functor $\cC/\cL_{A_2} \rightarrow \cC/\cL_{A_1}$. This means that the rule $A \mapsto \cZ_\cC(A) := Z(\cC/\cL_A)$ defines a presheaf $\cZ_\cC$ on $\mathbf{Sp}(\cC)$.

Our second main result is then the following

\begin{theorem}\label{thm:intro-ln-sheaf} Suppose that $\cC$ is a locally noetherian Grothendieck category. Then $\cZ_\cC$ is a sheaf on $\mathbf{Sp}(\cC)$ for the stable topology.
\end{theorem}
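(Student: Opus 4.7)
The plan is to reduce the general sheaf condition to the finite case established in Theorem \ref{thm:intro-central-sheaf}. Let $A = \bigcup_{i \in I} A_i$ be a covering in the stable topology and let $(z_i)_{i \in I} \in \prod_i \cZ_\cC(A_i)$ be a compatible family. For each finite subset $F \subseteq I$ I put $A_F := \bigcup_{i \in F} A_i$; since $\{A_i\}_{i \in F}$ is a finite covering of $A_F$, Theorem \ref{thm:intro-central-sheaf} produces a unique $z_F \in \cZ_\cC(A_F)$ restricting to $z_i$ for each $i \in F$, and this uniqueness makes the $z_F$ into a coherent system as $F$ ranges over the filtered poset of finite subsets of $I$. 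The problem therefore reduces to showing that the canonical map $\cZ_\cC(A) \to \varprojlim_F \cZ_\cC(A_F)$ is a bijection.

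For this, the locally noetherian hypothesis is essential. Noetherian objects of $\cC$ generate, and any natural endomorphism of the identity functor commutes with filtered colimits; hence for any localizing subcategory $\cL$ of $\cC$, writing $L_\cL : \cC \to \cC/\cL$ for the quotient functor, the center $Z(\cC/\cL)$ may be described as the equalizer of the two natural maps
\begin{equation*}
\prod_{N} \End_{\cC/\cL}(L_\cL(N)) \rightrightarrows \prod_{\phi} \Hom_{\cC/\cL}(L_\cL(M), L_\cL(N)),
\end{equation*}
where $N, M$ range over noetherian objects of $\cC$ and $\phi$ over all morphisms $L_\cL(M) \to L_\cL(N)$ in $\cC/\cL$. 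Combining this with the identity $\cL_A = \bigcap_F \cL_{A_F}$ --- which is a direct consequence of the Gabriel bijection $\cL \leftrightarrow A(\cL)$ in the locally noetherian setting together with $A = \bigcup_F A_F$ --- and using that limits commute with limits, the desired bijectivity reduces to establishing, for each pair of noetherian $M, N \in \cC$, the Hom identity
\begin{equation*}
\Hom_{\cC/\cL_A}(L_{\cL_A}(M), L_{\cL_A}(N)) = \varprojlim_F \Hom_{\cC/\cL_{A_F}}(L_{\cL_{A_F}}(M), L_{\cL_{A_F}}(N)).
\end{equation*}

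To prove this Hom identity I would pass to the right-adjoint section functor of each Gabriel localization, which identifies both sides with subgroups of $\Hom_\cC(M, E(N))$: the left-hand side becomes $\Hom_\cC(M, \bar N^{(A)})$ and the right-hand side becomes $\bigcap_F \Hom_\cC(M, \bar N^{(A_F)})$, where $\bar N^{(\cL)} \subseteq E(N)$ denotes the $\cL$-closure of $N$. Because $\cC$ is locally noetherian, $E(N)$ is a finite direct sum of indecomposable injectives, and the stability of each $\cL_{A_F}$ provides an injective splitting $E(N) = I_F \oplus J_F$ with $I_F$ the $\cL_{A_F}$-torsion summand and $J_F$ its injective $\cL_{A_F}$-torsion-free complement. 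For any finite $F$ large enough that every indecomposable summand of $E(N)$ whose class lies in $A$ already lies in $A_F$, the complement $J_F$ stabilizes to a common ambient injective $\bar E$ containing both $\bar N^{(A)}$ and $\bar N^{(A_F)}$ as subobjects. Combined with $\cL_A = \bigcap_F \cL_{A_F}$, the universal property of the $\cL_A$-closure then yields the decisive identification $\bar N^{(A)} = \bigcap_F \bar N^{(A_F)}$ inside $\bar E$; since $\Hom_\cC(M, -)$ preserves intersections of subobjects, the Hom identity follows.

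I expect the closure-intersection identity $\bar N^{(A)} = \bigcap_F \bar N^{(A_F)}$ to be the main technical hurdle. It is precisely the step where both the locally noetherian hypothesis --- furnishing the finite indecomposable decomposition of $E(N)$ --- and the stability condition --- furnishing the splitting $E(N) = I_F \oplus J_F$ --- are indispensable; together they ensure that along the cofinal system of sufficiently large finite $F$ the ambient injective stabilizes, so that the intersection of the various closures can be carried out inside a single injective object of $\cC$.
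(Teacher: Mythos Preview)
Your route is genuinely different from the paper's. The paper does not reduce to the finite case at all. Instead, it constructs an auxiliary presheaf $\cF$ on $\mathbf{Sp}(\cC)$ by setting $\cF(A)$ equal to the set of families $(z_x)_{x\in A}\in\prod_{x\in A}Z(\End_\cC(E_x))$ satisfying $z_y v=v z_x$ for every $v\in\Hom_\cC(E_x,E_y)$. This $\cF$ is visibly a sheaf for the stable topology: the only nontrivial point in gluing is that if $v:E_x\to E_y$ is nonzero and $y\in A_i$, then $x\in A_i$ as well, which is exactly the characterisation of stable subsets. The bulk of the work is then to produce an isomorphism $\cZ_\cC\cong\cF$ of presheaves; this uses that $E_A=\bigoplus_{x\in A}E_x$ is a good injective cogenerator of $\cC/\cL_A$, a Morita-type theorem that $Z(\cC/\cL_A)\cong Z(\End_\cC(E_A))$ via evaluation, and a matrix computation identifying the latter with $\cF(A)$. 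The paper's approach thus trades your inverse-limit analysis for an explicit, pointwise model of the sheaf.

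Your argument is viable, but several steps are sketchier than they look. First, the phrase ``limits commute with limits'' does not literally apply, because the index set of morphisms $\phi$ in your equalizer description depends on $\cL$; what you actually need is that the Hom identity is injective, so that naturality can be checked after passing to each $\cC/\cL_{A_F}$ --- this works but should be said. Second, the assertion that both sides of the Hom identity embed into $\Hom_\cC(M,E(N))$ with transition maps given by inclusion is only correct on the cofinal system of large $F$ (for small $F$ the transition maps involve projections $J_{F'}\twoheadrightarrow J_F$); you do pass to this cofinal system, but you should also check that on it the abstract transition map $S_{F'}L_{F'}(N)\to S_F L_F(N)$ agrees with the inclusion $\bar N^{(A_{F'})}\hookrightarrow\bar N^{(A_F)}$ inside $\bar E$ --- this follows because any two such extensions of $\id_{\bar N_0}$ differ by a map from an $\cL_{A_{F'}}$-torsion object into the $\cL_{A_{F'}}$-torsion-free $\bar E$. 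Third, for the closure-intersection identity $\bar N^{(A)}=\bigcap_F\bar N^{(A_F)}$ you invoke the ``universal property of the $\cL_A$-closure''; concretely this amounts to the chain $(\bigcap_F\bar N^{(A_F)})/\bar N_0\subseteq\bigcap_F(\bar N^{(A_F)}/\bar N_0)\in\bigcap_F\cL_{A_F}=\cL_A$, where the last equality uses the Gabriel bijection and the fact that $\bigcup_F A_F=A$ is already stable (hence Ziegler-closed). With these points made precise, your argument goes through.
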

Equivalently, the presheaf $Z_\cC$ appearing in Thm. \ref{thm:intro-central-sheaf} satisfies the sheaf condition for arbitrary coverings under the assumption that $\cC$ is locally noetherian.

In the case where $\cC = \Mod(R)$ for some non-commutative noetherian ring $R$, a sheaf of non-commutative rings on $\mathbf{Sp}(\cC)$ in the stable topology is constructed in \cite{GM} and \cite{Lou}. Our sheaf  $\cZ_{\cC}$ is isomorphic to the center of their sheaf. But the techniques in these papers rely heavily on the theory of rings of quotients (\cite{Ste}) and do not generalise to the case of a general locally noetherian category.

We will show elsewhere that $\cC = \Mod(G)$ is a locally noetherian category when $G$ is the group $SL_2(\mathbb{Q}_p)$. We will also explain how to calculate $\mathbf{Sp}(\cC)$ and give a uniform construction of a large family of stable subsets of $\mathbf{Sp}(\cC)$. The resulting sheaf $\cZ_{\cC}$ is then closely related to the recent work of Dotto-Emerton-Gee \cite{DEG}.

In fact, in a lecture in Dublin in July 2019 describing the forthcoming work \cite{DEG}, Emerton talked about the category $\Mod_\chi(G)$ of mod-$p$ smooth $G = GL_2(\mathbb{Q}_p)$-representations with a fixed central character $\chi$. He made the very inspiring suggestion that the center of this category is small and uninteresting, but that this only reflects the fact that this center is the ring of global sections of an interesting sheaf formed by the centers of the quotient categories of $\Mod_\chi(G)$. In our paper \cite{AS} we showed that, indeed, the center of the category $\Mod(G)$ of mod-$p$ smooth $G$-representations of any connected algebraic $p$-adic group $G$ only depends on the center of $G$. The present paper now sets up a general formalism of central sheaves.

The authors acknowledge support from Deutsche Forschungsgemeinschaft (DFG, German Research Foundation) under Germany's Excellence Strategy EXC 2044 –390685587, Mathematics Münster: Dynamics–Geometry–Structure. The second author also acknowledges support from the Mathematical Institute and Brasenose College, Oxford.

\section{The setting}

Let $\fA$ be any abelian category. Its center $Z(\fA)$ is, by definition, the endomorphism ring of the identity functor on $\fA$. It is a commutative ring, which has the following partial functoriality properties:
\begin{itemize}
  \item[--] For any full abelian subcategory $\fA'$ of $\fA$ restriction induces a ring homomorphism $Z(\fA) \rightarrow Z(\fA')$.
  \item[--] Any quotient functor $\fA \rightarrow \fA/\fB$ with respect to a Serre subcategory $\fB$ of $\fA$ induces a ring homomorphism $Z(\fA) \rightarrow Z(\fA/\fB)$.
\end{itemize}

Suppose now that $\cC$ is a Grothendieck category. This means that $\cC$ is an abelian category which has small coproducts (hence small colimits) such that filtered colimits are exact and which has a generator. Such a category is locally small (i.e., the subobjects of any given object form a set) and has enough injective objects (hence injective hulls) as well as small products. In this case we may consider the following construction.

Recall that a full subcategory $\cL$ of $\cC$ is called \emph{localizing} if it is closed under the formation of subobjects, quotient objects, extensions, and arbitrary direct sums. In particular, it is strictly full and abelian.

We note that the collection of all localizing subcategories of $\cC$ is a set (\cite{Pop} Note 3 on p.\ 298), which we will denote by $\mathbf{L}(\cC)$.

Let $\cL$ be a localizing subcategory of $\cC$. Since it is, in particular, a Serre subcategory we may form the quotient category $\cC/\cL$ together with the exact quotient functor $T : \cC \rightarrow \cC/\cL$ (\cite{Gab} Prop.\ 1 on p.\ 367). The functor $T$ has a right adjoint $S : \cC/\cL \rightarrow \cC$, called the \textit{section functor}, which is left exact and fully faithful (\cite{Gab} Prop.\ 8 on p.\ 377 and Prop.\ 2 on p.\ 369 and \cite{Pop} p.\ 177). We also have, for any object $Y$ in $\cC$, a largest subobject $t_\cL(Y)$ of $Y$ which is maximal among all subobjects of $Y$ which lie in $\cL$ (\cite{Gab} Cor.\ 1 on p.\ 375).

\begin{lemma}\phantomsection\label{lem:quotGroth}
\begin{enumerate}
\item $\cL$ and $\cC / \cL$ are both Grothendieck categories, and the quotient functor $T : \cC \rightarrow \cC / \cL$ commutes with inductive limits.
\item $T \circ S \cong \id_{\cC/\cL}$.
\item The section functor $S : \cC / \cL \rightarrow \cC$ preserves injective objects as well as essential extensions.
\end{enumerate}
\end{lemma}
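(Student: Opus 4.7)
The plan for (1) is to invoke Gabriel's thesis directly. The quotient $\cC/\cL$ is Grothendieck: for any generator $U$ of $\cC$, its image $T(U)$ generates $\cC/\cL$; the functor $T$ preserves all colimits since it admits the right adjoint $S$ already recorded above; and exactness of filtered colimits in $\cC/\cL$ descends from the corresponding property of $\cC$ via $T$. To see that $\cL$ is itself Grothendieck, one observes that $\cL$ is a Serre subcategory closed under arbitrary coproducts, hence has all small colimits computed inside $\cC$, and filtered colimits remain exact for the same reason; a generator is then furnished by the set $\{t_\cL(U/V) : V \subseteq U\}$ of $\cL$-torsion subobjects of quotients of a generator $U$ of $\cC$.

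For (2), the plan is simply to invoke the classical categorical fact that, for an adjunction $T \dashv S$, the counit $TS \Rightarrow \id$ is a natural isomorphism if and only if the right adjoint $S$ is fully faithful. Full faithfulness of $S$ has already been recorded, so this yields the claim at once.

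The substantive part is (3). Preservation of injectives is formal: for $I$ injective in $\cC/\cL$, the functor $\Hom_\cC(-, SI) \cong \Hom_{\cC/\cL}(T(-), I)$ is exact because $T$ is exact. For essential extensions, consider $X \hookrightarrow Y$ essential in $\cC/\cL$ and any nonzero subobject $Z \subseteq SY$; one must show $Z \cap SX \neq 0$. First I would argue that $TZ \neq 0$: since $SY$ is $\cL$-closed in the sense of Gabriel, it contains no nonzero subobject lying in $\cL$, so the torsion subobject $t_\cL(Z)$ vanishes and $TZ \neq 0$. Then, using the exactness of $T$ (which makes $T$ preserve intersections of subobjects) together with the isomorphism from (2), one computes
\[
T(Z \cap SX) \;\cong\; TZ \cap TSX \;\cong\; TZ \cap X
\]
as subobjects of $Y$; essentiality of $X \subseteq Y$ forces this to be nonzero, whence $Z \cap SX$ is itself nonzero.

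I expect the main obstacle to be merely bookkeeping: extracting from the standard references that $SY$ is $\cL$-torsion-free and that the exact functor $T$ commutes with intersections of subobjects. Both facts are routine consequences of the adjunction $T \dashv S$ together with exactness of $T$, and no new idea beyond careful reference-chasing should be required.
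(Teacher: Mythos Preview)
Your proposal is correct. For parts (1), (2), and the injective-preservation half of (3) you are doing exactly what the paper does: invoking Gabriel's thesis for (1), the standard ``fully faithful right adjoint $\Leftrightarrow$ counit is an isomorphism'' fact for (2), and the adjunction $\Hom_\cC(-,SI) \cong \Hom_{\cC/\cL}(T(-),I)$ together with exactness of $T$ for injectives.

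The only genuine difference is in the essential-extension half of (3). The paper simply cites \cite{Pop} Cor.~4.4.7, whereas you supply a direct argument: $SY$ is $\cL$-torsion-free, hence any nonzero $Z \subseteq SY$ has $TZ \neq 0$; then exactness of $T$ (so $T$ preserves pullbacks, in particular intersections of subobjects) plus $TS \cong \id$ gives $T(Z \cap SX) \cong TZ \cap X \neq 0$ by essentiality in $\cC/\cL$. This is precisely the content of the Popescu reference, so your argument is a self-contained unpacking of the citation rather than a different route. The only place to be slightly careful is the identification of $TSX$ with $X$ \emph{as a subobject of} $TSY \cong Y$, but this follows from naturality of the counit, as you implicitly use.
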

\begin{proof}
(1) is \cite{Gab} Prop.\ 9 on p.\ 378. (2) is \cite{Gab} Prop.\ 3(a) on p.\ 371. (3) The preservation of injective objects follows from the exactness of $T$ and the adjointness $\Hom_{\cC/\cL}(T(-),I) = \Hom_{\cC}(-,S(I))$. For the preservation of essential extensions see \cite{Pop} Cor.\ 4.4.7.
\end{proof}

\begin{remark}\label{rem:iso}
Let $\cL \subseteq \cL'$ be localizing subcategories of $\cC$; we have:
\begin{itemize}
\item[i.] $\cL'/\cL$ is a localizing subcategory of $\cC/\cL$;
\item[ii.] the natural functor $\cC/\cL \rightarrow \cC/\cL'$ induces an equivalence $(\cC/\cL) \big/ (\cL'/\cL) \xrightarrow{\cong} \cC/\cL'$;
\item[iii.] The section functor $S : \cC/\cL \rightarrow \cC$ sends $\cL'/\cL$ to $\cL'$.
\end{itemize}
\end{remark}
\begin{proof}
That $\cL'/\cL$ is a Serre subcategory of $\cC/\cL$ together with the asserted equivalence is a special case of \cite{Pop} Ex.\ 4.3.6. By Lemma \ref{lem:quotGroth} the functor $T$ commutes with arbitrary direct sums. Any family of objects in $\cL'/\cL$ can be lifted to a family in $\cL'$. Since $\cL'$ is closed under the formation of direct sums (in $\cC$) we conclude that $\cL'/\cL$ is closed under the formation of direct sums (in $\cC/\cL$). It follows (\cite{Gab} Prop.\ 8 on p.\ 377 or \cite{Pop} Prop.\ 4.6.3) that $\cL'/\cL$ is localizing in $\cC/\cL$.

For iii. let $T(X)$ be any object in $\cL'/\cL$ for some object $X$ in $\cL'$, and consider the exact sequence $0 \rightarrow t_{\cL}(X) \rightarrow X \rightarrow ST(X) \rightarrow C \rightarrow 0$. The outer terms $t_{\cL}(X)$ and $C$ lie in $\cL$ and hence also in $\cL'$ because $\cL \subseteq \cL'$. Applying the exact quotient functor $T' : \cC \rightarrow \cC/\cL'$ to this exact sequence shows that $T'(ST(X)) \cong T'(X) = 0$. Hence $S(T(X))$ lies in $\ker(T') = \cL'$ as required.
\end{proof}

Remark \ref{rem:iso} says that for localizing subcategories $\cL \subseteq \cL'$ of $\cC$ the natural functor $\cC/\cL \rightarrow \cC/\cL'$ is a quotient functor and hence induces a natural ring homomorphism $Z(\cC/\cL) \rightarrow Z(\cC/\cL')$ between the corresponding centers. Instead of just considering the center $Z(\cC)$ we propose to investigate the ``presheaf'' of commutative rings $\cL \mapsto Z(\cC/\cL)$ on $\mathbf{L}(\cC)$. This will require the concept of stability of $\cL$, which we will discuss in the next section.

Here we only remark that obviously the intersection of any family of localizing subcategories again is localizing. Therefore, for any two localizing subcategories $\cL_1, \cL_2$ of $\cC$ the smallest localizing subcategory $\cL_1 \vee \cL_2$ of $\cC$ which contains $\cL_1$ and $\cL_2$ is well defined.

\section{Stability}\label{sec:stability}

Throughout the paper we let $\cC$ denote a Grothendieck category. A localizing subcategory $\cL$ of $\cC$ is called \textit{stable} if it is closed under the passage to essential extensions. Obviously the intersection of any family of stable localizing subcategories is stable.

\begin{lemma}\label{lem:stable-T-inj}
 For any stable localizing subcategory $\cL$ of $\cC$ the quotient functor $T : \cC \rightarrow \cC / \cL$ preserves injective objects.
\end{lemma}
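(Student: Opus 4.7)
The plan is to use stability to reduce to an $\cL$-torsion-free injective, and then promote an injective hull constructed in $\cC/\cL$ back to $\cC$ via the section functor $S$. Given an injective object $I$ of $\cC$, I would first show that the torsion subobject $t_\cL(I)$ is itself injective. Since $\cC$ is a Grothendieck category, the injective hull $E$ of $t_\cL(I)$ can be realized as a direct summand of $I$, so $t_\cL(I) \subseteq E \subseteq I$ with the first inclusion essential. This is the only place where stability enters: it forces $E \in \cL$, and then maximality of $t_\cL(I)$ among $\cL$-subobjects of $I$ yields $E = t_\cL(I)$. Consequently $I = t_\cL(I) \oplus I'$ with $I'$ injective and $\cL$-torsion-free, and since $T(t_\cL(I)) = 0$ it suffices to prove that $T(I')$ is injective in $\cC/\cL$.

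Next I would check that the unit morphism $\eta : I' \to ST(I')$ is an isomorphism. Its kernel is $t_\cL(I') = 0$, and applying the exact functor $T$ to $0 \to I' \to ST(I') \to \coker\eta \to 0$ together with $TS \cong \id$ (Lemma \ref{lem:quotGroth}(2)) shows that $\coker\eta \in \cL$. The injectivity of $I'$ splits this short exact sequence as $ST(I') \cong I' \oplus \coker\eta$, but $ST(I')$ is itself $\cL$-torsion-free: by adjunction, $\Hom_\cC(N, S(-)) = \Hom_{\cC/\cL}(T(N), -) = 0$ for any $N \in \cL$, so no nonzero $\cL$-subobject embeds into $S(T(I'))$. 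Hence $\coker\eta = 0$.

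Finally, I would embed $T(I')$ into its injective hull $J$ inside the Grothendieck category $\cC/\cL$ and apply $S$. By Lemma \ref{lem:quotGroth}(3), the resulting monomorphism $I' \cong ST(I') \hookrightarrow S(J)$ is essential in $\cC$, and $S(J)$ is injective in $\cC$; injectivity of $I'$ then forces $I' \cong S(J)$. Applying $T$ and using $TS \cong \id_{\cC/\cL}$ gives $T(I) \cong T(I') \cong TS(J) \cong J$, which is injective. The main obstacle is the very first step: stability is used precisely to ensure that $t_\cL(I)$ splits off as a direct summand of $I$, and without it there is no reason for either $t_\cL(I)$ or the quotient $I/t_\cL(I)$ to be injective, which would collapse the entire reduction to the $\cL$-torsion-free case.
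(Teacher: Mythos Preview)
Your argument is correct. The paper's own proof is a one-line citation to \cite{Gab} Cor.\ 3 on p.\ 375, so you have supplied a genuine proof where the paper defers to the literature. Interestingly, your route inverts the paper's logical order: you first establish directly from stability that $t_\cL(I)$ is injective and splits off (this is the content of Prop.\ \ref{prop: Gab375Cor2}(2),(3), which the paper obtains by a separate citation to Gabriel), and only then deduce that $T$ preserves injectives; the paper instead cites Lemma \ref{lem:stable-T-inj} first and uses it to prove Prop.\ \ref{prop: Gab375Cor2}(1). Your order has the advantage of making transparent exactly where stability is used---namely, to force the injective hull of $t_\cL(I)$ to remain in $\cL$---and the remainder of the argument (that $\eta : I' \to ST(I')$ is an isomorphism for $\cL$-torsion-free injective $I'$, and that $S$ sends an injective hull of $T(I')$ back to $I'$) relies only on Lemma \ref{lem:quotGroth} and standard adjunction facts, so there is no circularity with the paper's later results.
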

\begin{proof}
\cite{Gab} Cor.\ 3 on p.\ 375.
\end{proof}

\begin{proposition}\label{prop: Gab375Cor2}
   Suppose that $\cL$ is a stable localizing subcategory of $\cC$ and let $Y$ be an injective object in $\cC$. We have:
\begin{enumerate}
\item $ST(Y)$ is injective in $\cC$.
\item There is an isomorphism $Y\cong t_{\cL}(Y) \oplus ST(Y)$ in $\cC$.
\item $t_{\cL}(Y)$ is injective in $\cC$.
\end{enumerate}
\end{proposition}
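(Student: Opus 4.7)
The plan is to establish (1) directly from the already-available lemmas, to establish (3) from the definition of stability by an injective-hull argument, and to deduce (2) from (3) together with formal properties of the adjunction $T \dashv S$. For (1): stability of $\cL$ implies that $T$ preserves injectives by Lemma \ref{lem:stable-T-inj}, and Lemma \ref{lem:quotGroth}(3) says the same for $S$, so $ST(Y)$ is injective.

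For (3), let $E$ be an injective hull of $t_\cL(Y)$ in $\cC$. The inclusion $t_\cL(Y) \hookrightarrow Y$ extends along $t_\cL(Y) \hookrightarrow E$ to a map $E \to Y$ by injectivity of $Y$, and this map is necessarily monic because $t_\cL(Y) \subseteq E$ is essential. Thus $E$ identifies with a subobject of $Y$ which is an essential extension of the $\cL$-object $t_\cL(Y)$. Stability of $\cL$ now forces $E \in \cL$, and then maximality of $t_\cL(Y)$ forces $E = t_\cL(Y)$. Hence $t_\cL(Y)$ is injective.

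For (2): by (3), the inclusion $t_\cL(Y) \hookrightarrow Y$ splits, yielding $Y \cong t_\cL(Y) \oplus Y'$ with $Y'$ injective. Any subobject of $Y'$ lying in $\cL$ is contained in $t_\cL(Y) \cap Y' = 0$, so $t_\cL(Y') = 0$. The unit $\eta : Y' \to ST(Y')$ of the adjunction $T \dashv S$ has kernel $t_\cL(Y') = 0$ and cokernel in $\cL$, a standard consequence of the triangle identity together with $TS \cong \id_{\cC/\cL}$. Injectivity of $Y'$ then splits $0 \to Y' \to ST(Y') \to C \to 0$ as $ST(Y') \cong Y' \oplus C$ with $C \in \cL$. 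But the adjunction gives $\Hom_\cC(C, ST(Y')) \cong \Hom_{\cC/\cL}(T(C), T(Y')) = 0$ since $T(C) = 0$, so the summand $C$ vanishes. Finally $T(t_\cL(Y)) = 0$ implies $T(Y) \cong T(Y')$, whence $ST(Y) \cong ST(Y') \cong Y'$, as required.

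The most delicate point is the identification $Y' \cong ST(Y')$ at the end of (2), which rests on the standard description of the kernel and cokernel of the unit of the localization adjunction. Stability of $\cL$ enters crucially only in the essential-extension argument for (3) (and via Lemma \ref{lem:stable-T-inj} in (1)); once (3) is in hand, (2) is a formal consequence of the structure of the adjunction.
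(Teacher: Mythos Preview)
Your proof is correct. For (1) you argue exactly as the paper does. For (2) and (3), however, you reverse the paper's logical order: the paper obtains (2) by a direct citation of \cite{Gab} Cor.\ 2 on p.\ 375 and then deduces (3) trivially from (2) as $t_{\cL}(Y)$ is a summand of an injective, whereas you first prove (3) by an injective-hull argument exploiting stability directly, and only then derive (2) by splitting off $t_{\cL}(Y)$ and identifying the complement with $ST(Y)$ via the unit of the adjunction. Your route is more self-contained, essentially unpacking what Gabriel's corollary does, and the identification $Y' \cong ST(Y')$ that you carry out by hand is the statement that an $\cL$-torsion-free injective is $\cL$-closed (cf.\ \cite{Gab} Lemma 1 on p.\ 370, which the paper invokes elsewhere). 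The paper's approach is shorter but relies on an external reference; yours makes the role of stability in (2) transparent.
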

\begin{proof} (1) This follows from Lemma \ref{lem:stable-T-inj} and Lemma \ref{lem:quotGroth}(3).

(2) This follows from \cite{Gab} Cor.\ 2 on p.\ 375, using the stability assumption on $\cL$.

(3) By (2), $t_{\cL}(Y)$ is a direct summand of the injective object $Y$.
\end{proof}

\begin{lemma}\label{lem: 3IT}
Let $\cL \subseteq \cL'$ be localizing subcategories of $\cC$ such that $\cL$ is stable. Then $\cL'$ is stable in $\cC$ if and only if $\cL'/\cL$ is stable in $\cC/\cL$.
\end{lemma}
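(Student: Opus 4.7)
The plan is to tackle the two implications separately, each time exploiting the adjunction $T \dashv S$ together with the $\cL$-torsion decomposition of injectives (Prop.\ \ref{prop: Gab375Cor2}) afforded by the stability of $\cL$. The forward direction is the lighter of the two: assuming $\cL'$ stable in $\cC$, I would take an essential extension $\bar Y \hookrightarrow \bar Z$ in $\cC/\cL$ with $\bar Y \in \cL'/\cL$, apply $S$ to obtain an essential extension $S\bar Y \hookrightarrow S\bar Z$ in $\cC$ by Lemma \ref{lem:quotGroth}(3), observe that $S\bar Y \in \cL'$ thanks to Remark \ref{rem:iso}(iii), and conclude $S\bar Z \in \cL'$ by the stability of $\cL'$; then $\bar Z \cong TS\bar Z$ lies in $T(\cL') = \cL'/\cL$ by Lemma \ref{lem:quotGroth}(2).

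For the reverse direction, suppose $\cL'/\cL$ is stable in $\cC/\cL$. Given an essential extension $M \hookrightarrow E$ in $\cC$ with $M \in \cL'$, I would first reduce to the case $E$ injective by passing to the injective hull $\hat M$ of $M$, which also essentially contains $E$, so that $E \in \cL'$ as soon as $\hat M$ does (using that $\cL'$ is closed under subobjects). Using Proposition \ref{prop: Gab375Cor2}(2) I would then decompose $E = t_{\cL}(E) \oplus ST(E)$; since $t_{\cL}(E) \in \cL \subseteq \cL'$ and $\cL'$ is extension-closed, it remains to show $ST(E) \in \cL'$, equivalently $T(E) \in \cL'/\cL$. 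Now $T(E)$ is injective in $\cC/\cL$ by Lemma \ref{lem:stable-T-inj}, and $T(M) \in \cL'/\cL$, so the stability hypothesis on $\cL'/\cL$ will close the argument provided I can verify that $T(M) \hookrightarrow T(E)$ is essential in $\cC/\cL$.

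The hard part is thus this essentiality claim, and it is where stability of $\cL$ really bites. For a nonzero subobject $N \subseteq T(E)$, I would embed $S(N) \hookrightarrow ST(E) \hookrightarrow E$, where the second inclusion is precisely the direct summand from Proposition \ref{prop: Gab375Cor2}(2), and note $S(N) \neq 0$ since $TS \cong \id$. Essentiality of $M$ in $E$ then gives $M \cap S(N) \neq 0$. The key observation is that every object in the image of $S$ is $\cL$-torsion-free, being $\cL$-closed by the adjunction $T \dashv S$ together with $T(L) = 0$ for $L \in \cL$; hence the subobject $M \cap S(N)$ is also $\cL$-torsion-free, and in particular not killed by $T$. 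Exactness of $T$ combined with $TS \cong \id$ then rewrites $T(M \cap S(N)) = T(M) \cap N$, producing the required nonzero intersection. The main subtlety I expect to have to handle carefully is securing the embedding $ST(E) \hookrightarrow E$ inside $\cC$; this is not formal but is supplied exactly by the stability of $\cL$ via the decomposition in Proposition \ref{prop: Gab375Cor2}(2).
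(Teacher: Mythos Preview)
Your proposal is correct and follows essentially the same strategy as the paper. The forward direction is identical. For the converse, both arguments reduce to the injective hull, split off $t_{\cL}(E)$ via Prop.~\ref{prop: Gab375Cor2}, and then show that $T$ applied to (a suitable essential subobject of) $E$ yields an essential extension in $\cC/\cL$; the only cosmetic difference is that the paper first restricts to $X' := M \cap ST(E)$ and invokes Gabriel's result that $T$ preserves essential monomorphisms between $\cL$-torsion-free objects, whereas you verify the essentiality of $T(M) \hookrightarrow T(E)$ directly by pulling subobjects back along $S$ --- which amounts to reproving that Gabriel fact in this instance.
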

\begin{proof}
``$\Longrightarrow$'' Let $X \hookrightarrow Y$ be an essential extension in $\cC/\cL$, where $X$ is an object in $\cL'/\cL$. Then $S(X) \hookrightarrow S(Y)$ is an essential extension in $\cC$ by Lemma \ref{lem:quotGroth}(3). Since $S(X)$ lies in $\cL'$ by Remark \ref{rem:iso}.iii and since $\cL'$ is stable in $\cC$ by assumption, we see that $S(Y)$ lies in $\cL'$. Using Lemma \ref{lem:quotGroth}(1), we deduce that $Y \cong TS(Y)$ lies in $\cL'/\cL$ as required.

``$\Longleftarrow$'' Let $X$ be a non-zero object in $\cL'$ and let $Y$ be an injective hull of $X$ in $\cC$; we have to show that $Y$ lies in $\cL'$. Since $\cL$ is stable, using Prop.\ \ref{prop: Gab375Cor2}(2) we find an isomorphism $Y  \cong W \oplus Y'$ where $W := t_{\cL}(Y)$ and $Y' := ST(Y)$. We will show that $Y'$ lies in $\cL'$. We may assume that $Y' \neq 0$, as the claim is clear otherwise.

Consider the essential extension $j : X \hookrightarrow W \oplus Y'$. Since $Y'$ is non-zero, $X' := j(X) \cap Y'$ is essential in $Y'$. Since $Y'$ is $\cL$-torsion-free, $T(X')$ is essential in $T(Y')$ by \cite{Gab} Prop.\ 6 on p.\ 374. Since $X'$ is a subobject of $X$, $X'$ lies in the localizing subcategory $\cL'$, so $T(X')$ lies in $\cL'/\cL$. Since $\cL'/\cL$ is stable in $\cC/\cL$ by assumption, the essential extension $T(Y')$ of $T(X')$ lies in $\cL'/\cL$ as well. Hence $ST(Y')$ lies in $\cL'$ by Remark \ref{rem:iso}.iii. However $ST(Y') = STST(Y) \cong ST(Y) = Y'$, so $Y'$ lies in $\cL'$ as claimed.

Since $W$ lies in $\cL$ and since $\cL \subseteq \cL'$, we see that $Y \cong W \oplus Y'$ lies in $\cL'$ as required.
\end{proof}

Next we will establish that with $\cL_1$ and $\cL_2$ also $\cL_1 \vee \cL_2$ is stable.

\begin{proposition}\label{prop: stableL3}
  Let $\cL_1$ and $\cL_2$ be two localizing subcatgories of $\cC$ such that $\cL_1 \cap \cL_2 = 0$. The strictly full subcategory $\cL_{1,2}$ of $\cC$ whose objects are isomorphic to $X_1 \oplus X_2$ with $X_i$ being an object of $\cL_i$ for $i = 1, 2$ is a stable localizing subcategory of $\cC$.
\end{proposition}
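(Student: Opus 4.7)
The plan is to exploit the disjointness $\cL_1 \cap \cL_2 = 0$ to establish a structural decomposition lemma for subobjects of $X_1 \oplus X_2$ (with $X_i \in \cL_i$), and then to combine this lemma with the injective-hull machinery from Prop.\ \ref{prop: Gab375Cor2} (which uses the stability of $\cL_1$ and $\cL_2$ assumed throughout this section) to verify each of the closure properties defining a stable localizing subcategory.

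First I would prove the structural lemma: any subobject $Y \subseteq X_1 \oplus X_2$ with $X_i \in \cL_i$ decomposes as $Y = (Y \cap X_1) \oplus (Y \cap X_2)$. Indeed, the two projections exhibit $Y/(Y \cap X_1) \hookrightarrow X_2$ and $Y/(Y \cap X_2) \hookrightarrow X_1$, so the further quotient $Y/((Y \cap X_1) + (Y \cap X_2))$ is simultaneously a quotient of an object in $\cL_1$ and of an object in $\cL_2$, forcing it to lie in $\cL_1 \cap \cL_2 = 0$. A dual argument handles quotients of $X_1 \oplus X_2$. Closure of $\cL_{1,2}$ under subobjects, quotients, and arbitrary direct sums then follows immediately, using for direct sums the isomorphism $\bigoplus_\alpha (X_1^{(\alpha)} \oplus X_2^{(\alpha)}) \cong \bigoplus_\alpha X_1^{(\alpha)} \oplus \bigoplus_\alpha X_2^{(\alpha)}$ combined with closure of each $\cL_i$ under direct sums.

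For stability, given $M = M_1 \oplus M_2 \in \cL_{1,2}$ and an essential extension $M \hookrightarrow N$: since injective hulls commute with finite direct sums, $E(N) = E(M) = E(M_1) \oplus E(M_2)$, and by stability of each $\cL_i$ the summand $E(M_i)$ still lies in $\cL_i$, so $E(M) \in \cL_{1,2}$. The structure lemma applied to $N \subseteq E(M)$ now gives $N \in \cL_{1,2}$.

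The main obstacle is closure under extensions, which I would handle by a two-step injective-hull embedding. Given $0 \to A \to B \to C \to 0$ with $A, C \in \cL_{1,2}$, embed $A$ into $E(A) \in \cL_{1,2}$ and extend by injectivity to $\phi : B \to E(A)$. The kernel $K := \ker \phi$ satisfies $K \cap A = 0$ (since $\phi|_A$ is injective), so $K \hookrightarrow B/A = C$ and hence $K \in \cL_{1,2}$ by the structure lemma. Now embed $K$ into $E(K) \in \cL_{1,2}$, extend to $\psi : B \to E(K)$, and consider $\Phi = (\psi, q) : B \to E(K) \oplus B/K$, where $q : B \twoheadrightarrow B/K$ is the quotient. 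Since $\psi|_K$ is the injective inclusion $K \hookrightarrow E(K)$, the kernel of $\Phi$ is $K \cap \ker \psi = 0$, so $\Phi$ is injective. As $B/K \cong \im \phi \subseteq E(A) \in \cL_{1,2}$, the target $E(K) \oplus B/K$ lies in $\cL_{1,2}$, and closure under subobjects finally yields $B \in \cL_{1,2}$.
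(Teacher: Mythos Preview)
Your proposal is correct and follows essentially the same strategy as the paper: establish the decomposition $Y = (Y\cap X_1)\oplus(Y\cap X_2)$ for subobjects, deduce closure under subobjects and quotients, handle coproducts componentwise, and obtain stability from the fact that injective hulls commute with finite direct sums together with stability of each $\cL_i$.

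The one genuine point of divergence is closure under extensions. The paper simply invokes the (dual) Horseshoe Lemma to produce a monomorphism $B\hookrightarrow E(A)\oplus E(C)$ in one stroke, and then applies the subobject decomposition. Your two-step construction---first $\phi:B\to E(A)$, then $\Phi=(\psi,q):B\hookrightarrow E(K)\oplus B/K$---is a valid hand-built substitute, but it is more circuitous: you pass through the auxiliary kernel $K$ and its injective hull, whereas the Horseshoe argument lands directly in $E(A)\oplus E(C)$. Both arguments ultimately exploit the same mechanism (embed $B$ into a direct sum already known to lie in $\cL_{1,2}$ and use subobject closure), so the difference is cosmetic rather than conceptual; the paper's version is just shorter.
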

\begin{proof}
Let $X = X_1 \oplus X_2$ be an object in $\cL_{1,2}$, where $X_i$ lies in $\cL_i$ for $i=1,2$. Let $Y$ be a subobject of $X$ in $\cC$ and let $Y_i = X_i \cap Y$ for $i = 1,2$. Then $Y_1 \oplus Y_2$ is an object in $\cL_{1,2}$ contained in $Y$. We have the monomorphism
\begin{equation*}
  Y/Y_1 = Y/(X_1 \cap Y) \cong (Y + X_1)/X_1 \hookrightarrow X/X_1 \cong X_2 \ ,
\end{equation*}
which shows that $Y/Y_1$ lies in $\cL_2$. On the other hand $Y/Y_1$ admits an epimorphism onto $Y / (Y_1 \oplus Y_2)$ so that the latter lies in $\cL_2$ as well. The same argument works with the roles of the indices $1$ and $2$ exchanged. We deduce that $Y / (Y_1 \oplus Y_2)$ lies in $\cL_1 \cap \cL_2$, and is therefore zero. Hence $Y = Y_1 \oplus Y_2$ lies in $\cL_{1,2}$. By passing to the quotient objects, we see that $X/Y \cong (X_1/Y_1) \oplus (X_2/Y_2)$ also lies in $\cL_{1,2}$.

We have shown that $\cL_{1,2}$ is closed under quotient objects and subobjects. Since the formation of injective hulls commutes with finite direct sums by \cite{Gab} Lemma 2 on p.\ 358, and since $\cL_1$ and $\cL_2$ are both stable in $\cC$, it is clear that $\cL_{1,2}$ is stable under injective hulls in $\cC$. Since $\cL_3$ is clearly closed under the formation of small coproducts in $\cC$, we see that it remains to show that $\cL_{1,2}$ is closed under extensions in $\cC$.

Let $0 \rightarrow X \rightarrow Y \rightarrow Z \rightarrow 0$ be a short exact sequence in $\cC$, where $X$ and $Z$ are objects in $\cL_{1,2}$. Let $I$ and $J$ be injective envelopes of $X$ and $Z$, respectively. Using (the dual of) the Horseshoe Lemma (\cite{Wei} Lemma 2.2.8) we find a commutative diagram in $\cC$
\begin{equation*}
  \xymatrix{
   & 0 \ar[d] & 0 \ar[d] & 0 \ar[d] & \\ 0 \ar[r] & X \ar[d]\ar[r] & Y \ar[r]\ar[d] & Z \ar[r]\ar[d] & 0 \\ 0 \ar[r] & I \ar[r] & I \oplus J \ar[r] & J \ar[r] & 0 }
\end{equation*}
with exact rows and columns. We saw above that $\cL_{1,2}$ is stable under injective envelopes in $\cC$, so $I$ and $J$ lie in $\cL_{1,2}$ because $X$ and $Z$ lie in $\cL_{1,2}$. Hence $I \oplus J$ also lies in $\cL_{1,2}$ and its subobject $Y$ lies in $\cL_{1,2}$ as well by the first paragraph above.
\end{proof}

\begin{corollary}\label{cor:stableL3}
   Suppose that $\cL_1 \cap \cL_2 = 0$. Then $\cL_1 \vee \cL_2 = \cL_{1,2}$, and $\cL_1 \vee \cL_2$ is stable in $\cC$.
\end{corollary}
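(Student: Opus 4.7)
The plan is to establish the two inclusions $\cL_{1,2} \subseteq \cL_1 \vee \cL_2$ and $\cL_1 \vee \cL_2 \subseteq \cL_{1,2}$, after which the stability assertion is immediate from Prop.\ \ref{prop: stableL3}.

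For the inclusion $\cL_{1,2} \subseteq \cL_1 \vee \cL_2$, I would note that the localizing subcategory $\cL_1 \vee \cL_2$ of course contains every object of $\cL_1$ and every object of $\cL_2$. Being closed under extensions and arbitrary direct sums (hence in particular under finite direct sums), it therefore contains $X_1 \oplus X_2$ for every $X_i$ in $\cL_i$. By definition of $\cL_{1,2}$ as a strictly full subcategory, this gives $\cL_{1,2} \subseteq \cL_1 \vee \cL_2$.

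For the reverse inclusion, I would use Prop.\ \ref{prop: stableL3}, which says that $\cL_{1,2}$ is itself a localizing subcategory of $\cC$. Moreover, $\cL_{1,2}$ contains $\cL_1$ (take $X_2 = 0$) and contains $\cL_2$ (take $X_1 = 0$). Since $\cL_1 \vee \cL_2$ is by definition the smallest localizing subcategory containing both $\cL_1$ and $\cL_2$, we conclude $\cL_1 \vee \cL_2 \subseteq \cL_{1,2}$.

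Combining the two inclusions gives $\cL_1 \vee \cL_2 = \cL_{1,2}$. Since Prop.\ \ref{prop: stableL3} asserts that $\cL_{1,2}$ is stable in $\cC$, the second assertion follows. There is no real obstacle here: the corollary is essentially a repackaging of Prop.\ \ref{prop: stableL3} once one observes that the hypothesis $\cL_1 \cap \cL_2 = 0$ forces any localizing subcategory containing $\cL_1$ and $\cL_2$ to contain every direct sum $X_1 \oplus X_2$ with $X_i \in \cL_i$, and conversely that $\cL_{1,2}$ itself is localizing.
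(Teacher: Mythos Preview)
Your proof is correct and follows essentially the same approach as the paper: both establish $\cL_{1,2} \subseteq \cL_1 \vee \cL_2$ by noting that any localizing subcategory containing $\cL_1$ and $\cL_2$ contains all direct sums $X_1 \oplus X_2$, and then obtain the reverse inclusion and stability directly from Prop.~\ref{prop: stableL3}. One small imprecision in your closing remark: the hypothesis $\cL_1 \cap \cL_2 = 0$ is not what forces a localizing subcategory containing $\cL_1$ and $\cL_2$ to contain $X_1 \oplus X_2$ (that holds automatically); rather, the hypothesis is used only through Prop.~\ref{prop: stableL3} to ensure that $\cL_{1,2}$ is itself localizing.
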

\begin{proof}
Clearly $\cL_{1,2}$ must be contained in any localizing subcategory of $\cC$ containing both $\cL_1$ and $\cL_2$, so $\cL_{1,2}$ is contained in $\cL_1 \vee \cL_2$. Conversely, $\cL_{1,2}$ \emph{is} a localizing subcategory of $\cC$ by Prop.\ \ref{prop: stableL3}, so $\cL_1 \vee\cL_2$ is contained in $\cL_{1,2}$. Hence $\cL_1 \vee \cL_2 = \cL_{1,2}$ is stable in $\cC$ by Prop.\ \ref{prop: stableL3}.
\end{proof}

\begin{corollary}\label{cor: t123}
  If $\cL_1 \cap \cL_2 = 0$ then $t_{\cL_1 \vee \cL_2} = t_{\cL_1} \oplus\, t_{\cL_2}$ and $t_{\cL_1} \circ t_{\cL_2} = t_{\cL_2} \circ t_{\cL_1} = 0$.
\end{corollary}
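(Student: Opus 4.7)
The plan is to use Corollary \ref{cor:stableL3}, which identifies $\cL_1 \vee \cL_2$ with the category $\cL_{1,2}$ of direct sums $X_1 \oplus X_2$ with $X_i \in \cL_i$.

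Fix an object $Y$ in $\cC$ and set $T_i := t_{\cL_i}(Y)$ for $i = 1, 2$. First I would observe that $T_1 \cap T_2$ is a subobject of both $T_1$ and $T_2$, hence lies in $\cL_1 \cap \cL_2 = 0$. Consequently the sum $T_1 + T_2 \subseteq Y$ is internally a direct sum $T_1 \oplus T_2$, which belongs to $\cL_{1,2} = \cL_1 \vee \cL_2$ by Cor.\ \ref{cor:stableL3}. By maximality of $t_{\cL_1 \vee \cL_2}(Y)$, this gives the inclusion $T_1 \oplus T_2 \subseteq t_{\cL_1 \vee \cL_2}(Y)$.

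For the reverse inclusion, set $Z := t_{\cL_1 \vee \cL_2}(Y)$. Again by Cor.\ \ref{cor:stableL3} we may write $Z \cong Z_1 \oplus Z_2$ with $Z_i \in \cL_i$. The key step, which I expect to be the only slightly delicate point, is to identify $Z_i$ with $t_{\cL_i}(Z)$. For $i=1$, the inclusion $Z_1 \subseteq t_{\cL_1}(Z)$ is immediate. Conversely, composing $t_{\cL_1}(Z) \hookrightarrow Z$ with the projection $Z \to Z/Z_1 \cong Z_2$ gives a subobject of $Z_2$ that is simultaneously a quotient of $t_{\cL_1}(Z) \in \cL_1$, hence lies in $\cL_1 \cap \cL_2 = 0$; therefore $t_{\cL_1}(Z) \subseteq Z_1$ and equality holds. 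The symmetric argument gives $t_{\cL_2}(Z) = Z_2$. Since $Z$ is a subobject of $Y$, we have $t_{\cL_i}(Z) \subseteq t_{\cL_i}(Y) = T_i$, and therefore $Z = t_{\cL_1}(Z) \oplus t_{\cL_2}(Z) \subseteq T_1 \oplus T_2$, which completes the first claim.

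For the second assertion, $t_{\cL_1}(t_{\cL_2}(Y))$ is by definition a subobject of $t_{\cL_2}(Y)$ lying in $\cL_1$; but as a subobject of $t_{\cL_2}(Y)$ it also lies in $\cL_2$, hence in $\cL_1 \cap \cL_2 = 0$. The vanishing of $t_{\cL_2} \circ t_{\cL_1}$ is obtained by interchanging the indices.
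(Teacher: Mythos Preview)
Your proof is correct and follows essentially the same approach as the paper's: both use Cor.\ \ref{cor:stableL3} to identify $\cL_1 \vee \cL_2$ with $\cL_{1,2}$, observe $T_1 \cap T_2 = 0$ for one inclusion, and decompose $Z := t_{\cL_1 \vee \cL_2}(Y)$ as $Z_1 \oplus Z_2$ for the other. The only difference is a small detour: you prove the stronger statement $Z_i = t_{\cL_i}(Z)$ before concluding $Z_i \subseteq T_i$, whereas the paper simply notes that $Z_i$, being a subobject of $Y$ lying in $\cL_i$, is contained in $t_{\cL_i}(Y) = T_i$ by maximality --- the intermediate identification with $t_{\cL_i}(Z)$ is not needed.
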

\begin{proof}
Let $X$ be an object in $\cC$. Note first that $t_{\cL_1}(X) \cap t_{\cL_2}(X)$ lies in $\cL_1 \cap \cL_2$ and is therefore zero. Hence the sum $t_{\cL_1}(X) + t_{\cL_2}(X)$ is direct, and is therefore contained in $t_{\cL_1 \vee \cL_2}(X)$. On the other hand, $t_{\cL_1 \vee \cL_2}(X)$ is an object in $\cL_1 \vee \cL_2$, so it is of the form $X_1 \oplus X_2$ with $X_i$ lying in $\cL_i$ for $i=1,2$. Then $X_i \leq t_{\cL_i}(X)$ for $i=1,2$, so $t_{\cL_1 \vee \cL_2}(X) = X_1 \oplus X_2$ is contained in $(t_{\cL_1} \oplus t_{\cL_2})(X)$. Finally $t_{\cL_1}(t_{\cL_2}(X))$ and $t_{\cL_2}(t_{\cL_1}(X))$ both lie in $\cL_1 \cap \cL_2$ and are therefore zero.
\end{proof}

\begin{proposition}\label{prop:vee-stable}
   If $\cL_1$ and $\cL_2$ are stable localizing subcategories of $\cC$ then $\cL_1 \vee \cL_2$ is stable as well.
\end{proposition}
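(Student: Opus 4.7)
The plan is to reduce to the already-handled case $\cL_1 \cap \cL_2 = 0$ of Corollary \ref{cor:stableL3} by quotienting out by $\cL := \cL_1 \cap \cL_2$. Since the intersection of stable localizing subcategories is stable, $\cL$ is a stable localizing subcategory of $\cC$ contained in $\cL_1 \vee \cL_2$. By Remark \ref{rem:iso}.i, the images $\cL_i' := \cL_i/\cL$ are localizing subcategories of $\cC/\cL$, and by Lemma \ref{lem: 3IT} (``$\Longrightarrow$'' direction) each $\cL_i'$ is stable in $\cC/\cL$.

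Next I would verify the two compatibilities needed to deploy Corollary \ref{cor:stableL3} in $\cC/\cL$. First, $\cL_1' \cap \cL_2' = 0$: if $X$ is an object of $\cC/\cL$ lying in both $\cL_1/\cL$ and $\cL_2/\cL$, then by Remark \ref{rem:iso}.iii its image $S(X)$ under the section functor lies in $\cL_1 \cap \cL_2 = \cL = \ker(T)$, whence $X \cong TS(X) = 0$ by Lemma \ref{lem:quotGroth}(2). Second, I would identify $\cL_1' \vee \cL_2'$ with $(\cL_1 \vee \cL_2)/\cL$: the inclusion $\supseteq$ is immediate since the right-hand side is a localizing subcategory of $\cC/\cL$ containing both $\cL_i'$; for $\subseteq$, observe that the preimage under the (exact, colimit-preserving) functor $T$ of any localizing subcategory of $\cC/\cL$ is a localizing subcategory of $\cC$, and the preimage of $\cL_1' \vee \cL_2'$ contains $\cL_1$ and $\cL_2$, hence contains $\cL_1 \vee \cL_2$, so $T(\cL_1 \vee \cL_2) \subseteq \cL_1' \vee \cL_2'$.

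With these two facts in hand, Corollary \ref{cor:stableL3} applied inside the Grothendieck category $\cC/\cL$ to the pair $\cL_1', \cL_2'$ yields that $(\cL_1 \vee \cL_2)/\cL = \cL_1' \vee \cL_2'$ is stable in $\cC/\cL$. Finally, since $\cL \subseteq \cL_1 \vee \cL_2$ and $\cL$ is stable in $\cC$, Lemma \ref{lem: 3IT} (``$\Longleftarrow$'' direction) concludes that $\cL_1 \vee \cL_2$ is stable in $\cC$.

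The substantive input is really Corollary \ref{cor:stableL3} (the disjoint case) and Lemma \ref{lem: 3IT} (the transitivity of stability through a quotient); the only real step here is the bookkeeping verification that $\cL_1' \cap \cL_2' = 0$ and that forming joins commutes with passage to the quotient by $\cL$. I expect the identification $(\cL_1 \vee \cL_2)/\cL = \cL_1' \vee \cL_2'$ to be the one place where care is needed, since it relies on the fact that preimages of localizing subcategories under exact cocontinuous functors are localizing; everything else is essentially a direct appeal to earlier results.
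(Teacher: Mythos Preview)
Your proof is correct and follows exactly the paper's strategy: quotient by $\cL = \cL_1 \cap \cL_2$, apply Corollary~\ref{cor:stableL3} to the disjoint images in $\cC/\cL$, then lift back via the reverse direction of Lemma~\ref{lem: 3IT}; the paper outsources the join identification $(\cL_1/\cL) \vee (\cL_2/\cL) = (\cL_1 \vee \cL_2)/\cL$ to \cite{NT} Prop.~3.2 and simply asserts that the intersection of the images is zero, whereas you supply self-contained arguments for both. One cosmetic slip: the labels ``$\supseteq$'' and ``$\subseteq$'' in your verification of that join identity are swapped --- the observation that $(\cL_1 \vee \cL_2)/\cL$ is localizing and contains both $\cL_i'$ yields $\cL_1' \vee \cL_2' \subseteq (\cL_1 \vee \cL_2)/\cL$, while the preimage argument yields the reverse inclusion.
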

\begin{proof}
Since obviously with $\cL_1$ and $\cL_2$ also $\cL_1 \cap \cL_2$ is stable in $\cC$ we use Lemma \ref{lem: 3IT} to confirm that $\cL_1/(\cL_1 \cap \cL_2)$ and $\cL_2/(\cL_1 \cap \cL_2)$ are stable in $\cC/(\cL_1 \cap \cL_2)$. Their intersection is zero so that $(\cL_1/(\cL_1 \cap \cL_2)) \vee (\cL_2/(\cL_1 \cap \cL_2))$ is stable in $\cC/(\cL_1 \cap \cL_2)$ by Cor.\ \ref{cor:stableL3}. Using \cite{NT} Prop.\ 3.2 one checks that $(\cL_1/(\cL_1 \cap \cL_2)) \vee (\cL_2/(\cL_1 \cap \cL_2)) = (\cL_1 \vee \cL_2)/(\cL_1 \cap \cL_2)$. The reverse direction in Lemma \ref{lem: 3IT} finally implies that $\cL_1 \vee \cL_2$ is stable in $\cC$.
\end{proof}

\section{Identifying the Gabriel gluing}\label{sec:gluing}

We fix two localizing subcategories $\cL_0, \cL_1$ of $\cC$. We then have the following natural commutative diagram of Grothendieck categories and exact quotient functors\footnote{For simplicity we take the point of view that quotient categories have the same objects as the categories they are the quotient of.}:
\begin{equation*}
  \xymatrix{
   & \cC / \cL_0 \ar[rd]^{F_0} & \\ \cC/(\cL_0 \cap \cL_1) \ar[ur]^{Q_0}\ar[dr]_{Q_1} & & \cC / (\cL_0 \vee \cL_1) \\ & \cC/\cL_1 \ar[ur]_{F_1} &
   }
\end{equation*}
In this situation \cite{Gab} \S IV.1 on p.\ 439 introduces the \emph{recollement} category
\begin{equation*}
  \cD := (\cC/\cL_0) \underset{\cC / (\cL_0 \vee \cL_1)}{\prod}{} (\cC/\cL_1)
\end{equation*}
In addition \cite{Gab} Prop.\ 1 on p.\ 440 gives us a natural functor
\begin{equation*}
  R : \cC/(\cL_0 \cap \cL_1) \longrightarrow  (\cC/\cL_0) \underset{\cC / (\cL_0 \vee \cL_1)}{\prod}{} (\cC/\cL_1)
\end{equation*}
which fits in the above diagram as follows:
\begin{equation}\label{diag:gluing}
  \xymatrix{
   & & \cC / \cL_0 \ar[rd]^{F_0} & \\ \cC/(\cL_0 \cap \cL_1) \ar[urr]^{Q_0} \ar[drr]_{Q_1} \ar[r]^-{R}  & (\cC/\cL_0) \underset{\cC / (\cL_0 \vee \cL_1)}{\prod}{} (\cC/\cL_1) \ar[ur]_(0.6){T_0} \ar[dr]^(0.6){T_1}& &   \cC / (\cL_0 \vee \cL_1) \\ & &  \cC/\cL_1 \ar[ur]_{F_1} &
   }
\end{equation}

\begin{lemma}\label{lem:Rfaithful}
The functor $R$ is exact and faithful.
\end{lemma}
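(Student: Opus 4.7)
The plan is to unwind the definition of the recollement $\cD$ and the functor $R$, verify componentwise exactness of kernels and cokernels in $\cD$ (whence exactness of $R$), and then derive faithfulness via the section functor for $\cL_0 \cap \cL_1$.

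First, I would recall that $\cD$ consists of triples $(X_0, X_1, \phi)$ with $X_i$ in $\cC/\cL_i$ and $\phi : F_0(X_0) \xrightarrow{\sim} F_1(X_1)$ an isomorphism in $\cC/(\cL_0 \vee \cL_1)$, morphisms being pairs $(f_0,f_1)$ compatible with the gluing isomorphisms. The functor $R$ sends an object $X$ to $(Q_0(X), Q_1(X), \id)$, using the canonical identification $F_0 Q_0 = F_1 Q_1$ as quotient functors onto $\cC/(\cL_0 \vee \cL_1)$ (diagram \eqref{diag:gluing}). To check that $R$ is exact, I would verify that kernels and cokernels in $\cD$ are computed componentwise: exactness of $F_0$ and $F_1$ implies that $F_i(\ker f_i) \cong \ker F_i(f_i)$ and similarly for cokernels, so the isomorphism $\phi$ restricts canonically to kernels and passes to cokernels, providing the required gluing data. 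A sequence in $\cD$ is thus exact iff both projections to $\cC/\cL_0$ and $\cC/\cL_1$ are exact, and since $Q_0$ and $Q_1$ are exact quotient functors, so is $R$.

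For faithfulness, suppose $f : X \to Y$ in $\cC/(\cL_0 \cap \cL_1)$ satisfies $R(f) = 0$, that is, $Q_0(f) = Q_1(f) = 0$. Applying the now-established exactness of $R$ to the canonical epi--mono factorisation $X \twoheadrightarrow \im(f) \hookrightarrow Y$ yields $R(\im f) = 0$, so $Q_0(\im f) = Q_1(\im f) = 0$. By Remark \ref{rem:iso} applied to each inclusion $\cL_0 \cap \cL_1 \subseteq \cL_i$, the kernel of $Q_i : \cC/(\cL_0 \cap \cL_1) \to \cC/\cL_i$ is the localizing subcategory $\cL_i/(\cL_0 \cap \cL_1)$, so $\im f$ lies in both. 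Applying the section functor $S : \cC/(\cL_0 \cap \cL_1) \to \cC$ and invoking Remark \ref{rem:iso}.iii twice, $S(\im f)$ lies in $\cL_0 \cap \cL_1$; Lemma \ref{lem:quotGroth}(2) then gives $\im f \cong T S(\im f) = 0$, whence $f = 0$.

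The step I expect to require the most care is the componentwise description of kernels and cokernels in $\cD$: the gluing-data calculus depends essentially on the exactness of $F_0$ and $F_1$, whereas the rest of the argument is a direct application of the quotient/section formalism already recorded in Lemma \ref{lem:quotGroth} and Remark \ref{rem:iso}.
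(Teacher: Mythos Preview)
Your proof is correct and follows essentially the same approach as the paper: both establish exactness from the componentwise description of $R$ together with the exactness of $Q_0$ and $Q_1$, and both reduce faithfulness to showing that $\im(f)$ lies in $\cL_0/(\cL_0\cap\cL_1)$ and in $\cL_1/(\cL_0\cap\cL_1)$, hence vanishes. The only difference is that the paper asserts this last vanishing directly, whereas you spell it out via the section functor and Remark~\ref{rem:iso}.iii together with Lemma~\ref{lem:quotGroth}(2); this is a welcome bit of extra detail rather than a different argument.
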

\begin{proof}
Given an object $X$ in $\cC/(\cL_0 \cap \cL_1)$, let $\eta_X : F_0 Q_0 (X) \rightarrow F_1 Q_1 (X)$ be the identity map of $X$ viewed in $\cC / (\cL_0 \vee \cL_1)$. Then the functor $R$ is given on objects by $R(X) = (Q_0(X), Q_1(X), \eta_X)$. It follows immediately from the exactness of $Q_0$ and $Q_1$ that $R$ is also exact.

Since $R$ is additive, to show that $R$ is a faithful functor, it is enough to show that $R(f) = 0$ implies $f = 0$. Suppose, then, that $f : X \rightarrow Y$ is such that $R(f) : R(X) \rightarrow R(Y)$ is zero in the recollement category $\cD$. Then the projections of $R(f)$ to $\cC/\cL_0$ and $\cC/\cL_1$ are zero, so both $Q_0(f) : Q_0(X) \rightarrow Q_0(Y)$  and $Q_1(f) : Q_1(X) \rightarrow Q_1(Y)$  are zero. This means that $\im(f)$ lies in $\cL_1/(\cL_0 \cap \cL_1)$ as well as in $\cL_1/(\cL_0 \cap \cL_1)$. So $\im(f) = 0$ and hence $f = 0$. We see that $R$ is faithful.
\end{proof}

All the functors $Q_i$, $F_i$ (by Remark \ref{rem:iso}) and $T_i$ (by \cite{Gab} Lemma 1 on p.\ 442) are quotient functors and hence are exact and have right adjoint section functors $U_i$, $H_i$, and $S_i$ respectively. For the convenience of the reader this is depicted in the following completed diagram:
\begin{equation}\label{diag:gluing2}
  \xymatrix{
   & & \cC / \cL_0 \ar@<-0.5ex>[lldd]^-{U_0} \ar@<-0.5ex>[ldd]^-{S_0} \ar@<1ex>[rdd]^-{F_0} & \\
   &&&     \\
   \cC/(\cL_0 \cap \cL_1) \ar@<1ex>[uurr]^-{Q_0} \ar@<-0.5ex>[ddrr]^-{Q_1}  \ar[r]^-R  & (\cC/\cL_0) \underset{\cC / (\cL_0 \vee \cL_1)}{\prod}{} (\cC/\cL_1) \ar@<1ex>[uur]^-{T_0} \ar@<1ex>[ddr]^-{T_1} & &   \cC / (\cL_0 \vee \cL_1)
   \ar@<-0.5ex>[luu]^-{H_0} \ar@<-0.5ex>[ldd]^{H_1} \\
   &&&    \\
   & &  \cC/\cL_1 \ar@<1ex>[uull]^-{U_1} \ar@<-0.5ex>[luu]^-{S_1} \ar@<1ex>[uur]^-{F_1} &   }
\end{equation}
(We will not use the functors $S_i$, though.)

These section functors allow us to construct a functor
\begin{equation*}
  V : (\cC/\cL_0) \underset{\cC / (\cL_0 \vee \cL_1)}{\prod}{} (\cC/\cL_1) \rightarrow \cC/(\cL_0 \cap \cL_1)
\end{equation*}
in the opposite direction. For this we need:
\begin{itemize}
  \item The composite $U_i \circ H_i$ is a section functor for the composite $F_i \circ Q_i$ for $i = 0, 1$. But $F_0 \circ Q_0 = F_1 \circ Q_1$ (by our convention for quotient categories). Hence we have a natural isomorphism $\iota : U_0 \circ H_0 \xrightarrow{\cong} U_1 \circ H_1$.
  \item Let $\eta_i : \id_{\cC/\cL_i} \rightarrow H_i \circ F_i$ and $\varepsilon_i : F_i \circ H_i \xrightarrow{\cong} \id_{\cC/(\cL_i \vee \cL_{1-i})}$, for $i = 0, 1$, denote the unit and the counit of the adjunction between $F_i$ and $H_i$, respectively.
  \item Let $\delta_i : \id_{\cC/(\cL_i \cap \cL_{1-i})} \rightarrow U_i \circ Q_i$ and $\gamma_i : Q_i \circ U_i \xrightarrow{\cong} \id_{\cC/\cL_i}$, for $i = 0, 1$, denote the unit and the counit of the adjunction between $Q_i$ and $U_i$, respectively.
\end{itemize}
Let now $\Theta = (X_0,X_1,\sigma)$ be an object in $\cD$. Recall that $\sigma$ is an isomorphism $F_0(X_0) \xrightarrow[\cong]{\sigma} F_1(X_1)$. Consider now the solid arrow diagram
\begin{equation}\label{diag:V}
  \xymatrix{
    V(X_0,X_1,\sigma) \ar@{-->}[d]_{\pi_{0,\Theta}} \ar@{-->}[rrrr]^{\pi_{1,\Theta}} &  &&  & U_1(X_1)  \ar[d]^{U_1(\eta_{1,X_1})}  \\
    U_0(X_0)  \ar[rr]^-{U_0(\eta_{0,X_0})} && U_0 H_0 F_0 (X_0) \ar[r]^{U_0 H_0 (\sigma)}_{\cong} & U_0 H_0 F_1 (X_1) \ar[r]^{\iota_{F_1(X_1)}}_{\cong} & U_1 H_1 F_1(X_1)       }
\end{equation}
and take its fiber product $V(X_0,X_1,\sigma)$ in $\cC / (\cL_0 \cap \cL_1)$.

In order to investigate the relation between $R$ and $V$ we first have to recall a few facts about adjunctions.

\begin{lemma}\phantomsection\label{lem:adjoints}
\begin{itemize}
  \item[i.] Let $A : \cA \rightarrow \cB$ be a functor and let $B_i : \cB \rightarrow \cA$, for $i = 0, 1$, be two right adjoint functors to $A$ with corresponding counits $\alpha_i : A \circ B_i \rightarrow \id_\cB$ and units $\beta_i : \id_\cA \rightarrow B_i \circ A$. Then there is a unique natural isomorphism $\nu : B_0 \xrightarrow{\cong} B_1$ such that the diagrams
\begin{equation*}
  \xymatrix{
  B_0 \circ A \ar[rr]^{\nu_{A(-)}} &  &  B_1 \circ A     \\
                & \id_\cA \ar[ul]^{\beta_0}   \ar[ur]_{\beta_1}             }
  \qquad\text{and}\qquad
  \xymatrix{
  A \circ B_0 \ar[rr]^{A(\nu)} \ar[dr]_{\alpha_0}
                &  &    A \circ B_1 \ar[dl]^{\alpha_1}    \\
                & \id_\cB                 }
\end{equation*}
      are commutative; it is given by
\begin{equation*}
   \nu_Y = B_1(\alpha_{0,Y}) \circ \beta_{1,B_0(Y)} : B_0(Y) \rightarrow B_1 A B_0(Y) \rightarrow B_1(Y) \qquad\text{for $Y$ in $\cB$} .
\end{equation*}
  \item[ii.] Let $A : \cA \rightarrow \cA'$ be a functor with right adjoint $B : \cA' \rightarrow \cA$ and counit $\alpha$ and unit $\beta$; furthermore, let $A' : \cA' \rightarrow \cA''$ be a functor with right adjoint $B': \cA'' \rightarrow \cA'$ and counit $\alpha'$ and unit $\beta'$. Then $A' \circ A : \cA \rightarrow \cA''$ has the right adjoint $B \circ B' : \cA'' \rightarrow \cA$; the corresponding counit and unit are the composites
\begin{equation*}
  A' A B B'(-) \xrightarrow{A'(\alpha_{B'(-)})} A' B'(-) \xrightarrow{\alpha'_{-}} \id_{\cA''}(-) \ \text{and}\ \id_\cA(-) \xrightarrow{\beta_{-}} BA(-) \xrightarrow{B(\beta'_{A(-)})} B B' A' A(-) ,
\end{equation*}
  respectively.
\end{itemize}
\end{lemma}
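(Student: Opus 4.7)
Both parts are standard facts about adjunctions; they follow formally from the triangle identities and the hom-set bijections associated to an adjunction, so the plan is simply to organise the bookkeeping.

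For part (i), I would take $\nu_Y := B_1(\alpha_{0,Y}) \circ \beta_{1, B_0(Y)}$ as the definition and verify first that it is natural in $Y$: given $f : Y \to Y'$, naturality of $\beta_1$ along $B_0(f)$ followed by naturality of $\alpha_0$ along $f$ reduces $\nu_{Y'} \circ B_0(f)$ and $B_1(f) \circ \nu_Y$ to the same composite. Next I would check the two prescribed triangles. For the counit triangle $\alpha_{1,Y} \circ A(\nu_Y) = \alpha_{0,Y}$, naturality of $\alpha_1$ applied to the morphism $\alpha_{0,Y}$ rewrites the left-hand side as $\alpha_{0,Y} \circ \alpha_{1, A B_0(Y)} \circ A(\beta_{1, B_0(Y)})$, which collapses to $\alpha_{0,Y}$ by the triangle identity $\alpha_1 \circ A\beta_1 = \id_A$. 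The unit triangle $\nu_{A(X)} \circ \beta_{0,X} = \beta_{1,X}$ is symmetric: naturality of $\beta_1$ applied to $\beta_{0,X}$ pushes it past the unit, and the triangle identity $\alpha_0 \circ A\beta_0 = \id_A$ finishes the job. Uniqueness of a natural $\nu$ satisfying the counit triangle is immediate: the bijection $\Hom_\cA(B_0(Y), B_1(Y)) \cong \Hom_\cB(A B_0(Y), Y)$ coming from the adjunction $(A, B_1)$ sends $\nu_Y$ to $\alpha_{1,Y} \circ A(\nu_Y) = \alpha_{0,Y}$, which pins $\nu_Y$ down. Finally, running the same construction with the roles of $B_0$ and $B_1$ swapped yields a natural transformation in the opposite direction, and uniqueness forces both composites to be identity transformations, so $\nu$ is an isomorphism.

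For part (ii), writing $\gamma_Z := \alpha'_Z \circ A'(\alpha_{B'(Z)})$ for the candidate counit and $\delta_X := B(\beta'_{A(X)}) \circ \beta_X$ for the candidate unit, I would verify the two triangle identities for the pair $(A' \circ A, B \circ B')$ directly. In each case the composite to be simplified contains one application of a naturality square (of $\alpha$ against $B\beta'_{A(-)}$, respectively of $\beta'$ against $A\beta_{-}$) which rearranges the expression into two nested pieces that reduce to identities by the individual triangle identities of the $(A,B)$ and $(A',B')$ adjunctions.

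The arguments involve no genuine obstacle; the only thing to watch is that the naturality squares are combined with the correct triangle identities in the correct order. Once the chain of rewrites is laid out, each step is a one-line application of a hypothesis, so I would keep the actual write-up brief and mainly display the two commutative triangles for $\nu$ together with the two triangle identities for the composite adjunction.
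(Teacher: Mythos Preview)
Your proposal is correct. For part~(i), the paper takes a slightly different organisation: it first obtains $\nu$ via the Yoneda lemma from the composite of the two hom-set adjunction bijections $\psi = \adj_1 \circ \adj_0^{-1}$, which gives naturality and invertibility of $\nu$ for free, then reads off the unit triangle from $\beta_{i,X} = \adj_i(\id_{A(X)})$, derives the explicit formula for $\nu_Y$ from the hom-set description, and finally verifies the counit triangle by the same diagram chase you describe. Your route---posit the formula, verify naturality and both triangles by hand, then get uniqueness from the adjunction bijection and invertibility from symmetry plus uniqueness---is the dual bookkeeping and equally valid; the paper's Yoneda approach is marginally more economical since naturality and isomorphism come without separate checks, while yours makes every step explicit. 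For part~(ii) the paper simply cites \cite{ML} IV.8 Thm.~1, so your direct verification of the triangle identities is more detailed than what the paper provides.
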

\begin{proof}
i. This is well known of course. But for the convenience of the reader we sketch a proof. The two adjunction isomorphisms
\begin{equation*}
  \xymatrix@R=0.5cm{
                &         \Hom_\cA(X,B_0(Y)) \ar[dd]^{\psi_{X,Y}}_{\cong}     \\
  \Hom_\cB(A(X),Y) \ar[ur]^{\adj_0}_{\cong} \ar[dr]_{\adj_1}^{\cong}                 \\
                &         \Hom_\cA(X,B_1(Y))                 }
\end{equation*}
give rise to the natural isomorphism $\psi$. By the Yoneda lemma it has to be of the form $\psi_{X,Y} = \Hom_\cA(X,\nu_Y)$ for a unique natural isomorphism $\nu : B_0 \xrightarrow{\cong} B_1$. The units satisfy
\begin{equation*}
  \beta_{i,X} = \adj_i(\id_{A(X)}) \ .
\end{equation*}
It immediately follows that $\nu_{A(X)} \circ \beta_{0,X} = \beta_{1,X}$ which is the commutativity of the left hand diagram. The counits satisfy
\begin{equation*}
  \adj_i(\alpha_{i,Y}) = \id_{B_i(Y)} \ .
\end{equation*}
By, vice versa, expressing the $\adj_i$ and hence $\psi$ in terms of the units and counits one easily derives the asserted formula for $\nu_Y$.

Finally consider the diagram
\begin{equation*}
  \xymatrix{
    A B_0(Y) \ar@{=}[drr] \ar[rr]^-{A(\beta_{1,B_0(Y)})} && A B_1 A B_0(Y) \ar[d]^{\alpha_{1,AB_0(Y)}} \ar[rr]^-{A B_1(\alpha_{0,Y})} && A B_1(Y) \ar[d]^{\alpha_{1,Y}} \\
              && A B_0(Y) \ar[rr]^-{\alpha_{0,Y}} && Y .   }
\end{equation*}
The left hand part commutes by one of the two triangular identities for the adjunction between $A$ and $B_1$ (\cite{ML} IV.1 Thm.\ 1), the right hand one by the naturality of $\alpha_1$. The composite of the top horizontal arrows is equal to $A(\nu_Y)$ by the formula for $\nu_Y$. This shows the commutativity of the right hand diagram in the assertion.

ii. \cite{ML} IV.8 Thm.\ 1.
\end{proof}

We first consider the solid part fo the diagram \eqref{diag:V} for an object $\Theta$ of the form $\Theta = R(X) = (Q_0(X), Q_1(X), \eta_X)$. Then $U_0H_0(\eta_X)$ is simply the identity morphism. Hence this solid part becomes the solid part of the diagram
\begin{equation}\label{diag:VR}
  \xymatrix{
  X \ar@{-->}[drrrr]^{\delta_{1,X}} \ar@{-->}[ddr]_{\delta_{0,X}} &&&&   \\
   &  &  &  & U_1 Q_1(X)  \ar[d]^{U_1(\eta_{1,Q_1(X)})}  \\
   &  U_0 Q_0(X)  \ar[rr]^-{U_0(\eta_{0,Q_0(X)})} && U_0 H_0 F_0 Q_0(X) = U_0 H_0 F_1 Q_1(X) \ar[r]^-{\iota_{F_1 Q_1(X)}}_-{\cong} & U_1 H_1 F_1Q_1(X)       }
\end{equation}
If $\rho_i$ denotes the unit for the adjunction between $F_1 \circ Q_1$ and $U_i \circ H_i$ then Lemma \ref{lem:adjoints}.ii tells us that $\rho_{i,X} = U_i(\eta_{i,Q_i(X)}) \circ \delta_{i,X}$. Hence the complete diagram \eqref{diag:VR} simplifies to the diagram
\begin{equation*}
  \xymatrix{
                & X \ar[dl]_{\rho_{0,X}} \ar[dr]^{\rho_{1,X}}             \\
 U_0 H_0 F_1 Q_1(X) \ar[rr]^{\iota_{F_1 Q_1(X)}} & & U_1 H_1 F_1 Q_1(X) ,      }
\end{equation*}
which commutes by Lemma \ref{lem:adjoints}.i. By the universal property of the fiber product we therefore obtain a natural transformation
\begin{equation}\label{f:RVunit}
  \id_{\cC/(\cL_0 \cap \cL_1)} \longrightarrow V \circ R \ .
\end{equation}

Next we study the other composite $R \circ V$. But for this we will always \textbf{assume in the following that $\cL_0$ and $\cL_1$ are stable.}

\begin{lemma}\label{lem:U}
   Suppose that $\cL_1 \cap \cL_2 = 0$; we then have:
\begin{itemize}
  \item[i.] $\Ext_\cC^j(X_0, X_1) = 0$ for any $X_0$ in $\cL_0$ and $X_1$ in $\cL_1$ and any $j \geq 0$.
  \item[ii.] For $i = 0, 1$ any object in $\cL_i$ is $\cL_{1-i}$-closed.
  \item[iii.] For $i = 0, 1$ and any $j \geq 0$ the derived functor $R^j U_i$ sends $(\cL_0 \vee \cL_1)/\cL_i$ to $\cL_{1-i}$.
\end{itemize}
\end{lemma}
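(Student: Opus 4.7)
First I would read the hypothesis $\cL_1 \cap \cL_2 = 0$ as $\cL_0 \cap \cL_1 = 0$ (the only two localizing subcategories in play are $\cL_0$ and $\cL_1$, both standing assumed to be stable). My plan is to prove (i) first by constructing a convenient injective resolution that stays inside $\cL_1$; parts (ii) and (iii) then follow quickly.

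For (i), the case $j=0$ is immediate: any morphism $f:X_0 \to X_1$ factors through $\im(f)$, which is simultaneously a quotient of $X_0 \in \cL_0$ and a subobject of $X_1 \in \cL_1$, hence lies in $\cL_0 \cap \cL_1 = 0$. For $j \geq 1$ the plan is to build an injective resolution $0 \to X_1 \to I^0 \to I^1 \to \cdots$ in $\cC$ with every $I^j \in \cL_1$. The inductive step goes as follows: given $Z \in \cL_1$, its injective hull $I$ in $\cC$ is an essential extension of $Z$, so $I \in \cL_1$ by stability of $\cL_1$; the cokernel $I/Z$ lies in $\cL_1$ since $\cL_1$ is closed under quotients; iterate. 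Computing $\Ext_\cC^j(X_0, X_1)$ against this resolution and applying the $j = 0$ case term by term yields the required vanishing.

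For (ii), I would use the standard characterisation that an object $Y$ of $\cC$ is $\cL$-closed exactly when $\Hom_\cC(L,Y) = \Ext_\cC^1(L,Y) = 0$ for every $L \in \cL$; both vanishings are special instances of (i), so any $X_i \in \cL_i$ is $\cL_{1-i}$-closed. For (iii), Corollary \ref{cor:stableL3} says every object of $\cL_0 \vee \cL_1$ is of the form $X_0 \oplus X_1$ with $X_j \in \cL_j$, and in $\cC/\cL_i$ the summand $X_i$ becomes zero; hence every object of $(\cL_0 \vee \cL_1)/\cL_i$ is isomorphic to $Q_i(L)$ for some $L \in \cL_{1-i}$. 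Pick an injective resolution $L \to I^\bullet$ inside $\cL_{1-i}$ as in (i). Since $\cL_i$ is stable, Lemma \ref{lem:stable-T-inj} tells us each $Q_i(I^j)$ is injective in $\cC/\cL_i$, so $Q_i(I^\bullet)$ is an injective resolution of $Q_i(L)$ (using exactness of $Q_i$). By (ii) each $I^j$ is $\cL_i$-closed, so the unit $I^j \to U_i Q_i(I^j)$ is an isomorphism; therefore
\[ R^j U_i(Q_i(L)) \cong H^j(U_i Q_i(I^\bullet)) \cong H^j(I^\bullet), \]
which equals $L$ for $j=0$ and $0$ for $j \geq 1$, and both lie in $\cL_{1-i}$.

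The only delicate step is invoking, in (ii), the Hom/Ext$^1$-characterisation of $\cL$-closed objects, which is standard (e.g.\ \cite{Pop}, Chap.\ 4) and merely needs a citation. Everything else is driven by just two ingredients: the stability of $\cL_0$ and $\cL_1$, which lets us stay inside $\cL_{1-i}$ throughout the injective resolution, and the description of $\cL_0 \vee \cL_1$ from Corollary \ref{cor:stableL3}.
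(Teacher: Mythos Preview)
Your proof is correct. Parts (i) and (ii) match the paper almost exactly: same image argument for $\Hom$-vanishing, same ``injective resolution stays in $\cL_1$ by stability'' for higher $\Ext$, and the same $\Hom$/$\Ext^1$ criterion for closedness (the paper cites \cite{Gab} Lemma~1.b on p.~370 rather than \cite{Pop}, and checks torsion-freeness separately via Cor.~\ref{cor: t123}, but this is cosmetic).

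For (iii) your route genuinely differs from the paper's. The paper works downstairs in $\cC/\cL_i$: after establishing the $j=0$ case as you do, it takes an injective resolution $Y \to I^\bullet$ \emph{in} $\cC/\cL_i$, invokes Cor.~\ref{cor:stableL3} together with Lemma~\ref{lem: 3IT} to see that $(\cL_0 \vee \cL_1)/\cL_i$ is stable in $\cC/\cL_i$ so that every $I^j$ remains in that subcategory, and then feeds each $I^j$ back into the $j=0$ case to conclude $U_i(I^j) \in \cL_{1-i}$. You instead build the injective resolution \emph{upstairs} inside $\cL_{1-i} \subset \cC$, push it down via $Q_i$ (using Lemma~\ref{lem:stable-T-inj} for injectivity), and use (ii) to identify $U_i Q_i(I^\bullet) \cong I^\bullet$. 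Your argument avoids Lemma~\ref{lem: 3IT} entirely and actually proves a sharper statement: $R^j U_i$ \emph{vanishes} on $(\cL_0 \vee \cL_1)/\cL_i$ for $j \geq 1$, not merely that it lands in $\cL_{1-i}$. The paper's route has the mild conceptual advantage of working directly in the category where $U_i$ is defined, but yours is shorter and more informative.
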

\begin{proof}
i. Let $f : X_0 \rightarrow X_1$ be a morphism in $\cC$. Then $\im(f)$ is a quotient object of $X_0$, so it lies in $\cL_0$; on the other hand it is a subobject of $X_1$ so it lies in $\cL_1$. Hence $\im(f)$ lies in $\cL_0 \cap \cL_1$ and is therefore zero. Hence $f = 0$, so $\Hom_{\cC}(X_0,X_1) = 0$.

Choose an injective resolution $X_1 \xrightarrow{\simeq} I^\bullet$ in $\cC$. Since $X_1$ lies in $\cL_1$ which is stable in $\cC$, we see that $I^j$ lies in $\cL_1$ for all $j \geq 0$, so $\Hom_{\cC}(X_0,I^j) = 0$ for all $j \geq 0$ by the first paragraph. Therefore $\Ext^j_{\cC}(X_0,X_1) = H^j(\Hom_{\cC}(X_0, I^\bullet)) = 0$ as well.

ii. Let $X$ be an object in $\cL_i$, so $X = \mathfrak{t}_{\cL_i}(X)$. Then $\mathfrak{t}_{\cL_{1-i}}(X) = \mathfrak{t}_{\cL_{1-i}}(\mathfrak{t}_{\cL_i}(X)) = 0$ by Cor.\ \ref{cor: t123}, so $X$ is $\cL_{1-i}$-torsion-free. Next, any short exact sequence $0 \rightarrow X \rightarrow Y \rightarrow Z \rightarrow 0$ with $Z$ lying in $\cL_{1-i}$ must split, because $\Ext^1_{\cC}(Z, X) = 0$ by i. Hence $X$ is $\cL_{1-i}$-closed by \cite{Gab} Lemma 1.b on p.\ 370.

iii. Every object $Y$ of $(\cL_0 \vee \cL_1)/\cL_i$ is of the form $Y = Q_i(X_i \oplus X_{1-i})$, where $X_j$ lies in $\cL_j$ for $j=0,1$. Since $Q_i (X_i) = 0$, we see that  $U_i (Y) = U_i Q_i (X_{1-i})$. However $X_{1-i}$ lies in $\cL_{1-i}$, so it is $\cL_i$-closed by ii. Therefore $U_i Q_i (X_{1-i}) \cong X_{1-i}$, so $U_i(Y) \cong X_{1-i}$ lies in $\cL_{1-i}$.

Now pick an injective resolution $Y \xrightarrow{\simeq} I^\bullet$ in $\cC/\cL_i$. From Cor.\ \ref{cor:stableL3} we know that $\cL_0 \vee \cL_1$ is stable in $\cC$. Hence $(\cL_0 \vee \cL_1)/\cL_i$  is stable in $\cC/\cL_i$ by Lemma \ref{lem: 3IT}. It follows that the resolution $I^\bullet$ lies, in fact, in $(\cL_0 \vee \cL_1)/\cL_i$. Applying now the first paragraph with each $I^j$ the assertion easily follows.
\end{proof}

\begin{lemma}\label{lem:Ugeneral}
   For $i = 0, 1$ and any $j \geq 0$ the derived functor $R^j U_i$ sends $(\cL_0 \vee \cL_1)/\cL_i$ to $\cL_{1-i}/(\cL_0 \cap \cL_1)$.
\end{lemma}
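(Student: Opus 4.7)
The plan is to reduce the statement to Lemma~\ref{lem:U}(iii) by passing to the quotient Grothendieck category $\cC' := \cC/\cL$, where $\cL := \cL_0 \cap \cL_1$. Set $\cL_i' := \cL_i/\cL$ for $i=0,1$. Since $\cL_0$ and $\cL_1$ are stable, so is $\cL$, and by Lemma~\ref{lem: 3IT} each $\cL_i'$ is a stable localizing subcategory of $\cC'$. Moreover $\cL_0' \cap \cL_1' = 0$: if $X$ lies in this intersection, then Remark~\ref{rem:iso}.iii forces $S(X) \in \cL_0 \cap \cL_1 = \cL$, hence $X \cong TS(X) = 0$. Thus the hypothesis of Lemma~\ref{lem:U} is satisfied by the pair $(\cL_0', \cL_1')$ inside $\cC'$.

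Next I would transport the conclusion of Lemma~\ref{lem:U}(iii) back to $\cC$. By Remark~\ref{rem:iso}.ii applied to $\cL \subseteq \cL_i$, there is a canonical equivalence $E_i : \cC'/\cL_i' \xrightarrow{\cong} \cC/\cL_i$ that intertwines the intrinsic quotient functor $\cC' \to \cC'/\cL_i'$ with $Q_i : \cC' \to \cC/\cL_i$. Uniqueness of right adjoints then shows that the section functor $U_i : \cC/\cL_i \to \cC'$ corresponds under $E_i$ to the intrinsic section functor $U_i' : \cC'/\cL_i' \to \cC'$; because $E_i$ is an exact equivalence that preserves injectives, the derived functors $R^j U_i$ and $R^j U_i'$ are likewise intertwined by $E_i$. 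From the proof of Prop.~\ref{prop:vee-stable} we have $\cL_0' \vee \cL_1' = (\cL_0 \vee \cL_1)/\cL$, so $E_i$ also identifies $(\cL_0' \vee \cL_1')/\cL_i'$ with $(\cL_0 \vee \cL_1)/\cL_i$.

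Applying Lemma~\ref{lem:U}(iii) inside $\cC'$ then yields that $R^j U_i'$ sends $(\cL_0' \vee \cL_1')/\cL_i'$ into $\cL_{1-i}'$; translating back via $E_i$, this is exactly the claim that $R^j U_i$ sends $(\cL_0 \vee \cL_1)/\cL_i$ into $\cL_{1-i}/\cL = \cL_{1-i}/(\cL_0 \cap \cL_1)$. The step requiring the most care is verifying the compatibility of $E_i$ with $U_i$, $U_i'$, and with their higher derived functors; once the uniqueness of right adjoints and the exactness and injective-preservation properties of $E_i$ are invoked, this is essentially bookkeeping.
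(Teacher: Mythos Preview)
Your proposal is correct and follows essentially the same route as the paper: both reduce to Lemma~\ref{lem:U}(iii) by passing to $\cC' = \cC/(\cL_0 \cap \cL_1)$, using Lemma~\ref{lem: 3IT} and the equivalence of Remark~\ref{rem:iso}.ii to transport the section functor and its derived functors between $\cC/\cL_i$ and $\cC'/\cL_i'$. The only cosmetic difference is that the paper reformulates the target condition as ``$R^j(Q_{1-i}U_i)$ kills $(\cL_0\vee\cL_1)/\cL_i$'' and names the equivalence $B_i$ with quasi-inverse $A_i$, whereas you work directly with the image of $R^jU_i$; the underlying argument is identical.
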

\begin{proof}
Write $\cN := \cL_0 \cap \cL_1$. By the universal property of quotient categories we have the following diagram of exact quotient functors:
\begin{equation*}
  \xymatrix{
  \cC \ar[rrr] \ar[d] &&& \cC/\cN \ar[d]^{Q_i'}\ar[dlll]_-{Q_i} \\
  \cC/\cL_i  &&& (\cC/\cN) / (\cL_i/\cN) \ar[lll]^-{B_i} }
\end{equation*}
Note that $Q_i$ is the functor  appearing in the basic diagram \eqref{diag:gluing}. By Remark \ref{rem:iso} the functor $B_i$ is an equivalence. Fix a quasi-inverse $A_i : \cC/\cL_i \xrightarrow{\simeq} (\cC/\cN) / (\cL_i/\cN)$ to $B_i$. Now, $A_iQ_i = A_i B_i Q_i' \cong Q_i'$. Since $B_i$ is a right adjoint to $A_i$ and $U_i$ is a right adjoint to $Q_i$, the functor $U_i' := U_i B_i$ is a right adjoint to $A_i Q_i \cong Q_i'$.

We have to show that $R^j( Q_{1-i} U_i )$ kills $(\cL_0 \vee \cL_1)/\cL_i$ for all $j \geq 0$. This has already been shown in the case where $\cN = 0$ in Lemma \ref{lem:U}.iii. Fix $j \geq 0$. Now, $\cL_0/\cN$ and $\cL_1/\cN$ are stable localizing subcategories of $\cC/\cN$ by Lemma \ref{lem: 3IT}, and their intersection is zero. Hence $R^j( Q_{1-i}' U_i')$ kills $\frac{(\cL_0/\cN) \vee (\cL_1/\cN)}{\cL_i/\cN}$ by this special case.

We have $Q_{1-i}' \cong A_{1-i} Q_{1-i}$ and $U_i' = U_i B_i$. Since $A_{1-i}$ and $B_i$ are equivalences of categories, we deduce that $R^j(Q_{1-i} U_i) B_i$ kills $\frac{(\cL_0/\cN) \vee (\cL_1/\cN)}{\cL_i/\cN}$. As noted already in the proof of Prop.\ \ref{prop:vee-stable}, we have $(\cL_0/\cN) \vee (\cL_1/\cN) = (\cL_0 \vee \cL_1)/\cN$. Hence $R^j(Q_{1-i} U_i)B_i$ kills $\frac{(\cL_0 \vee \cL_1)/\cN}{\cL_i/\cN}$.

Let $X$ be an object of $(\cL_0 \vee \cL_1)/\cL_i$. Then $X = Q_i (Y)$ for some object $Y$ of $(\cL_0 \vee \cL_1)/\cN$. But $Q_i = B_i Q_i'$, so $X = B_i Q_i'(Y)$, where $Q_i'(Y)$ lies in $\frac{(\cL_0 \vee \cL_1)/\cN}{\cL_i/\cN}$. Therefore $R^j(Q_{1-i} U_i) (X) = R^j(Q_{1-i} U_i)B_i Q_i' (Y) = 0$ by the above.
\end{proof}

We claim that the map $Q_0(\pi_{0,\Theta}) : Q_0 V(X_0, X_1, \sigma) \xrightarrow{\cong} Q_0 U_0 (X_0)$ is an isomorphism. Since the functor $Q_0$ is exact applying it to the diagram \eqref{diag:V} results in another fiber product diagram. Hence it is enough to show that the map $Q_0 U_1(\eta_{1,X_1}) : Q_1 U_1(X_1) \rightarrow Q_0 U_1 H_1 F_1(X_1)$ is an isomorphism. By \cite{Gab} Prop.\ 3.b on p.\ 371 kernel and cokernel of the map $\eta_{1,X_1}$ lie in $\ker(F_1) = (\cL_0 \vee \cL_1)/\cL_1$. On the other hand, by Lemma \ref{lem:Ugeneral}, the derived functors $Q_0 R^j U_1 = R^j(Q_0 U_1)$, for any $j \geq 0$, are zero on $(\cL_0 \vee \cL_1)/\cL_1$. It easily follows that $Q_0 U_1(\eta_{1,X_1})$ is an isomorphism. By symmetry we have that $Q_1(\pi_{1,\Theta}) : Q_1 V(X_0, X_1, \sigma) \xrightarrow{\cong} Q_1 U_1 (X_1)$ is an isomorphism as well.

\begin{proposition}\label{prop:R-equiv}
   Suppose that $\cL_0$ and $\cL_1$ are stable. Then
\begin{equation*}
  R V(X_0,X_1,\sigma) \xrightarrow{(\gamma_{0,X_0} \circ Q_0(\pi_{0,\Theta}), \gamma_{1,X_1} \circ Q_1(\pi_{1,\Theta}))} (X_0,X_1,\sigma)   \qquad\text{for $\Theta = (X_0,X_1,\sigma)$}
\end{equation*}
is a natural isomorphism between functors on $\cD$, and $R: \cC/(\cL_0 \cap \cL_1) \xrightarrow{\simeq} \cD$ is an equivalence of categories.
\end{proposition}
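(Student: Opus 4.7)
The plan is to verify two things: first, that the displayed morphism
\begin{equation*}
\beta_\Theta := \bigl(\gamma_{0,X_0} \circ Q_0(\pi_{0,\Theta}),\ \gamma_{1,X_1} \circ Q_1(\pi_{1,\Theta})\bigr)
\end{equation*}
is a natural isomorphism $R\circ V \xrightarrow{\cong} \id_\cD$; and second, that the unit $\alpha : \id_{\cC/(\cL_0 \cap \cL_1)} \to V \circ R$ from \eqref{f:RVunit} is also a natural isomorphism. Combined with the faithfulness of $R$ from Lemma \ref{lem:Rfaithful}, this exhibits $V$ as a quasi-inverse to $R$.

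For the first claim, I would begin by checking that $\beta_\Theta$ is a well-defined morphism in $\cD$. Using the convention $F_0 Q_0 = F_1 Q_1$, so that the gluing datum on $RV(\Theta)$ is the identity, the required compatibility collapses to
\begin{equation*}
\sigma \circ F_0\bigl(\gamma_{0,X_0} \circ Q_0(\pi_{0,\Theta})\bigr) = F_1\bigl(\gamma_{1,X_1} \circ Q_1(\pi_{1,\Theta})\bigr).
\end{equation*}
This can be verified by applying the functor $F_0 Q_0$ to the fiber-product square \eqref{diag:V} defining $V(\Theta)$ and collapsing the right-hand column using the counits $\varepsilon_i : F_i H_i \xrightarrow{\cong} \id$ of the adjunctions $F_i \dashv H_i$, together with the comparison isomorphism $\iota$ supplied by Lemma \ref{lem:adjoints}(i). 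Each component of $\beta_\Theta$ is then a composition of isomorphisms: $Q_i(\pi_{i,\Theta})$ is iso by the paragraph immediately preceding the proposition, while $\gamma_{i,X_i} : Q_i U_i(X_i) \to X_i$ is iso because the section functor $U_i$ is fully faithful (Lemma \ref{lem:quotGroth}(2) applied to the quotient functor $Q_i$). Naturality of $\beta$ in $\Theta$ is immediate from the universal property of the fiber product defining $V$.

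For the second claim, I would verify the triangle identity $\beta_{R(X)} \circ R(\alpha_X) = \id_{R(X)}$. Componentwise this reduces to
\begin{equation*}
\gamma_{i,Q_i(X)} \circ Q_i\bigl(\pi_{i,R(X)} \circ \alpha_X\bigr) = \gamma_{i,Q_i(X)} \circ Q_i(\delta_{i,X}) = \id_{Q_i(X)},
\end{equation*}
where the first equality uses the defining property $\pi_{i,R(X)} \circ \alpha_X = \delta_{i,X}$ of $\alpha_X$ (see the paragraph containing \eqref{f:RVunit}) and the second is the standard triangle identity for the adjunction $Q_i \dashv U_i$. Thus $R(\alpha_X) = \beta_{R(X)}^{-1}$ is an isomorphism. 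Since $R$ is exact and faithful between abelian categories (Lemma \ref{lem:Rfaithful}), it reflects isomorphisms: the kernel and cokernel of any $f$ with $R(f)$ iso are sent to $0$ by the exact $R$, and a faithful additive functor detects zero objects since $R(Z) = 0$ forces $\id_Z = 0$ and hence $Z = 0$. Therefore $\alpha_X$ is iso and $R$ is an equivalence of categories.

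I expect the main obstacle to be the very first step, verifying that $\beta_\Theta$ respects the gluing data: although the argument is formal, it requires careful bookkeeping through the various units, counits, and the comparison $\iota$ entering the construction of $V$ in \eqref{diag:V}. The remainder of the proof is a clean application of the triangle identities together with the standard fact that an exact faithful functor between abelian categories is conservative.
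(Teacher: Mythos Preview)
Your proposal is correct and, for the hard step, follows essentially the same route as the paper: the bulk of the work is the verification that $\beta_\Theta$ is a morphism in $\cD$, i.e.\ that $\sigma \circ F_0(\gamma_{0,X_0}\circ Q_0(\pi_{0,\Theta})) = F_1(\gamma_{1,X_1}\circ Q_1(\pi_{1,\Theta}))$, and the paper carries this out exactly as you describe, by applying $F_0Q_0=F_1Q_1$ to the fiber-product square \eqref{diag:V} and collapsing via the counits $\varepsilon_i$ and the comparison $\iota$ from Lemma~\ref{lem:adjoints}. Your identification of this as the main obstacle is accurate; the paper devotes several large diagrams to it, whereas you only sketch the plan.

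Where you diverge is in the concluding step. The paper simply asserts that $RV\cong\id_\cD$ makes $R$ full and essentially surjective, and then invokes faithfulness (Lemma~\ref{lem:Rfaithful}) to conclude. You instead prove directly that the unit $\alpha$ of \eqref{f:RVunit} is an isomorphism, via the triangle identity $\beta_{R(X)}\circ R(\alpha_X)=\id_{R(X)}$ together with the conservativity of the exact faithful functor $R$. Your argument is cleaner and more self-contained: the fullness claim in the paper is not entirely obvious from $RV\cong\id$ alone, and unpacking it leads one back to precisely your triangle-identity computation (note that the paper explicitly leaves to the reader the verification that $\alpha$ is the unit of an adjunction). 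What the paper's phrasing buys is brevity; what yours buys is an honest exhibition of $V$ as a quasi-inverse. Your use of the characterisation $\pi_{i,R(X)}\circ\alpha_X=\delta_{i,X}$ is correct and is exactly how $\alpha_X$ is produced from the universal property of the fiber product in diagram~\eqref{diag:VR}.
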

\begin{proof}
For the first part of the assertion it remains to show that the diagram
\begin{equation*}
  \xymatrix{
    F_0 Q_0 V(\Theta) \ar[d]_{\eta_{V(\Theta)} = \id} \ar[rrr]^-{F_0(\gamma_{0,X_0} \circ Q_0(\pi_{0,\Theta}))} &&& F_0(X_0) \ar[d]^{\sigma} \\
    F_1 Q_1 V(\Theta) \ar[rrr]^-{F_1(\gamma_{1,X_1} \circ Q_1(\pi_{1,\Theta}))} &&& F_1(X_1)   }
\end{equation*}
is commutative. Consider the commutative diagram:
\begin{equation*}
  \xymatrix{
    Q_0 V(X_0,X_1,\sigma) \ar[d]_{Q_0(\pi_{0,\Theta})} \ar[rrr]^{Q_0(\pi_{1,\Theta})} &  &  & Q_0 U_1(X_1)  \ar[d]^{Q_0 U_1(\eta_{1,X_1})}  \\
    Q_0 U_0(X_0)\  \ar[d]_{\gamma_{0,X_0}}^{\cong}  \ar[r]^-{Q_0 U_0(\eta_{0,X_0})} & \  Q_0 U_0 H_0 F_0 (X_0) \ \ar[d]_{\gamma_{0,H_0 F_0(X_0)}}^{\cong} \ar[r]^{Q_0 U_0 H_0 (\sigma)}_{\cong} & \  Q_0 U_0 H_0 F_1 (X_1) \ \ar[d]_{\gamma_{0,H_0 F_1(X_1)}}^{\cong} \ar[r]^{Q_0(\iota_{F_1(X_1)})}_{\cong} & \ Q_0 U_1 H_1 F_1(X_1)    \\
    X_0 \ar[r]^-{\eta_{0,X_0}} & \quad H_0 F_0(X_0) \ar[r]^{H_0(\sigma)}_{\cong} & H_0 F_1(X_1)  }
\end{equation*}
The upper square is $Q_0$ applied to the diagram \eqref{diag:V}, which is still a fiber product diagram. The lower two squares are commutative by the naturality of the counit $\gamma_0$. We now apply the functor $F_0$. First notice that the bottom line becomes the top line of the diagram
\begin{equation*}
  \xymatrix{
    F_0(X_0) \ \ar@{=}[d] \ar[r]^-{F_0(\eta_{0,X_0})} & \ F_0 H_0 F_0(X_0) \ \ar[r]^{F_0 H_0(\sigma)}_{\cong} \ar[d]_{\varepsilon_{0,F_0(X_0)}}^{\cong} & \ F_0 H_0 F_1(X_1) \ar[d]_{\varepsilon_{0,F_1(X_1)}}^{\cong} \\
    F_0(X_0) \ar@{=}[r]   & F_0(X_0) \ar[r]^{\sigma}_{\cong}   & F_1(X_1) .
    }
\end{equation*}
The commutativity of the left square is a general property of adjunctions (\cite{ML} IV.1 Thm.\ 1(ii)), the one of the right square is the naturality of the counit $\varepsilon_0$. If we combine this diagram with $F_0$ applied to the previous diagram we obtain the commutative diagram
\begin{equation*}
  \xymatrix{
    F_0 Q_0 V(X_0,X_1,\sigma) \ar[dddd]^{F_0(\gamma_{0,X_0} \circ Q_0(\pi_{0,\Theta}))} \ar[rr]^-{F_0 Q_0(\pi_{1,\Theta})} && F_0 Q_0 U_1(X_1) \ar[d]^{F_0 Q_0 U_1(\eta_{1,X_1})} \ar@{=}[r] & F_1 Q_1 U_1(X_1)  \ar[d]^{F_1 Q_1 U_1(\eta_{1,X_1})}  \\
          && F_0 Q_0 U_1 H_1 F_1(X_1)  \ar@{=}[r] &  F_1 Q_1 U_1 H_1 F_1(X_1)         \\
          && F_0 Q_0 U_0 H_0 F_1(X_1) \ar[u]_{F_0 Q_0 (\iota_{F_1(X_1)})}^{\cong} \ar[d]^{F_0(\gamma_{0,H_0 F_1(X_1)})}_{\cong} \ar@{=}[r] & F_1 Q_1 U_0 H_0 F_1(X_1) \ar[u]_{F_1 Q_1 (\iota_{F_1(X_1)})}^{\cong}  \\
          && F_0 H_0 F_1(X_1) \ar[d]^{\varepsilon_{0,F_1(X_1)}}_{\cong}  & \\
    F_0 (X_0)\ar[rr]^{\sigma} && F_1(X_1) & .  }
\end{equation*}
By the analog for $X_1$ of our earlier argument for $X_0$ the diagram
\begin{equation*}
  \xymatrix{
    F_1 Q_1 U_1(X_1) \ar[d]_{F_1 Q_1 U_1(\eta_{1,X_1})} \ar[rr]^-{F_1(\gamma_{1,X_1})}_-{\cong} && F_1(X_1) \ar[d]^{F_1(\eta_{1,X_1})} \ar@{=}[r] & F_1(X_1) \ar@{=}[d] \\
    F_1 Q_1 U_1 H_1 F_1(X_1) \ar[rr]^-{F_1(\gamma_{1,H_1 F_1(X_1)})}_-{\cong} && F_1 H_1 F_1(X_1) \ar[r]^-{\varepsilon_{1,F_1(X_1)}}_-{\cong} & F_1(X_1)
    }
\end{equation*}
is commutative. The combination of these last two diagrams results in the commutative diagram
\begin{equation*}
  \xymatrix{
    F_0 Q_0 V(X_0,X_1,\sigma) \ar[d]_{F_0(\gamma_{0,X_0} \circ Q_0(\pi_{0,\Theta}))} \ar[rrr]^-{F_1(\gamma_{1,X_1} \circ Q_1(\pi_{1,\Theta}))}   &&&  F_1(X_1) \\
    F_0(X_0)  \ar[rrr]^-{\sigma} &&& F_1(X_1) \ar[u]_{?}         }
\end{equation*}
where $?$ is the solid arrow composite isomorphism
\begin{equation*}
  \xymatrix{
    F_0 Q_0 U_0 H_0 F_1(X_1) \ar[d]_{F_0(\gamma_{0,H_0 F_1(X_1)})}^{\cong} \ar[rr]^{F_0 Q_0(\iota_{F_1(X_1)})}_{\cong} && F_0 Q_0 U_1 H_1 F_1(X_1) \ar@{=}[r] & F_1 Q_1 U_1 H_1 F_1(X_1) \ar[d]^{F_1(\gamma_{1,H_1 F_1(X_1)})}_{\cong} \\
    F_0 H_0 F_1(X_1) \ar[d]_{\varepsilon_{0,F_1(X_1)}}^{\cong}  &&& F_1 H_1 F_1(X_1) \ar[d]^{\varepsilon_{1,F_1(X_1)}}_{\cong} \\
    F_1(X_1) \ar[rrr]^{?} &&& F_1(X_1) .  }
\end{equation*}
This reduces us to showing that $?$, in fact, is the identity. By Lemma \ref{lem:adjoints}.ii the compositions of the perpendicular arrows are the counits $\tau_i$ of the adjunctions between $F_0 \circ Q_0 = F_1 \circ Q_1$ and $U_i \circ H_i$. Hence the above diagram can be rewritten as
\begin{equation*}
  \xymatrix{
    F_0 Q_0 U_0 H_0 F_1(X_1)  \ar[d]_{\tau_{0,F_1(X_1)}}^{\cong} \ar[rr]^{F_0 Q_0(\iota_{F_1(X_1)})}_{\cong} && F_0 Q_0 U_1 H_1 F_1(X_1) \ar[d]^{\tau_{1,F_1(X_1)}}_{\cong} \\
    F_1(X_1) \ar[rr]^{\id} && F_1(X_1)  . }
\end{equation*}
It is commutative by Lemma \ref{lem:adjoints}.i.

By the first part of the assertion the functor $R$ is full and is essentially surjective on objects. But by Lemma \ref{lem:Rfaithful} it is also faithful. Hence is must be an equivalence.
\end{proof}

Since we will not use it we leave it to the reader to verify that the natural transformation \eqref{f:RVunit} is the unit for the adjunction between $R$ and $V$.

The above result greatly generalizes \cite{Gab} Prop.\ 2 on p.\ 441.

\begin{proposition}\label{prop:finite-sheaf}
    For any finitely many stable localizing subcategories $\cL_1, \ldots , \cL_n$ of $\cC$ the natural maps
\begin{equation*}
\xymatrix@C=0.5cm{
  0 \ar[r] & Z \big(\cC /(\cL_1 \cap \ldots \cap \cL_n) \big) \ar[r]^{} & \prod_{i=1}^n Z(\cC/\cL_i) \ar@<1ex>[r]{} \ar@<-1ex>[r]{} & \prod_{i,j}Z\big(\cC / (\cL_i \vee \cL_j) \big) }
\end{equation*}
   form an exact sequence.
\end{proposition}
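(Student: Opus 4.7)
\emph{Plan.} I plan to induct on $n$, taking the base case $n = 2$ directly from Prop.\ \ref{prop:R-equiv}; the main technical input is a distributivity identity in the lattice of stable localizing subcategories.

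First, exactness at $Z(\cC/\cL)$ is immediate: if $z \in Z(\cC/\cL)$ has zero image in every $Z(\cC/\cL_i)$, then for each object $X$ of $\cC/\cL$ the morphism $z_X$ has image in $\cL_i/\cL$ for all $i$, hence in $\bigcap_i(\cL_i/\cL) = (\bigcap_i \cL_i)/\cL = \cL/\cL = 0$. The case $n = 2$ then follows by combining Prop.\ \ref{prop:R-equiv} with the general identity $Z(\cA_0 \times_{\cB} \cA_1) = Z(\cA_0) \times_{Z(\cB)} Z(\cA_1)$, which is immediate on unravelling the definition of a natural endomorphism of the identity functor on a $2$-fibre product.

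For the inductive step with $n \geq 3$, set $\cL' := \cL_2 \cap \ldots \cap \cL_n$ (stable as an intersection of stable subcategories). Given a pairwise compatible tuple $(z_i)_{i=1}^n$, the inductive hypothesis applied to $(z_i)_{i \geq 2}$ produces $w \in Z(\cC/\cL')$ with $w|_{\cC/\cL_i} = z_i$ for each $i \geq 2$. Applying the base case to $\{\cL_1, \cL'\}$ reduces the problem to showing that $z_1$ and $w$ have the same image in $Z(\cC/(\cL_1 \vee \cL'))$. For each $i \geq 2$, the inclusion $\cL' \subseteq \cL_i$ gives $\cL_1 \vee \cL' \subseteq \cL_1 \vee \cL_i$, and pairwise compatibility forces the images of both $z_1$ and $w$ in $Z(\cC/(\cL_1 \vee \cL_i))$ to equal $z_i|_{\cC/(\cL_1 \vee \cL_i)}$. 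Their difference therefore lies in the kernel of the restriction map $Z(\cC/(\cL_1 \vee \cL')) \to \prod_{i \geq 2} Z(\cC/(\cL_1 \vee \cL_i))$, which by the same injectivity argument is zero provided the distributive identity
\[ \cL_1 \vee (\cL_2 \cap \ldots \cap \cL_n) = \bigcap_{i = 2}^n (\cL_1 \vee \cL_i) \]
holds in $\mathbf{L}^{st}(\cC)$.

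The main obstacle is therefore this distributive identity; I would first prove it in the three-term form $(\cL_1 \vee \cL_2) \cap (\cL_1 \vee \cL_3) = \cL_1 \vee (\cL_2 \cap \cL_3)$ (the $n$-term version follows by iterated application). Let $X$ belong to the left-hand side; stability of both joins implies the injective hull $I$ of $X$ in $\cC$ also does so. Prop.\ \ref{prop: Gab375Cor2} decomposes $I \cong t_{\cL_1}(I) \oplus I^c$ with $I^c$ injective and $\cL_1$-torsion-free, and $I^c$ remains in $\cL_1 \vee \cL_2$ (and $\cL_1 \vee \cL_3$) as a direct summand. Passing to $\cC/(\cL_1 \cap \cL_2)$, where the images $\overline{\cL_1}, \overline{\cL_2}$ intersect trivially and Cor.\ \ref{cor:stableL3} and Cor.\ \ref{cor: t123} apply, the compatibility of torsion with exact quotient functors together with $t_{\cL_1}(I^c) = 0$ forces the $\overline{\cL_1}$-summand of the image of $I^c$ in $\overline{\cL_1} \oplus \overline{\cL_2}$ to vanish, so this image lies in $\overline{\cL_2}$. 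Since $I^c$ is injective and $\cL_1 \cap \cL_2$-torsion-free, it is $\cL_1 \cap \cL_2$-closed, and Remark \ref{rem:iso}.iii then gives $I^c \in \cL_2$. By symmetry $I^c \in \cL_3$, so $I = t_{\cL_1}(I) \oplus I^c$ is a direct sum of an $\cL_1$-object and an object of $\cL_2 \cap \cL_3$; both summands lie in the localizing subcategory $\cL_1 \vee (\cL_2 \cap \cL_3)$, and closure under subobjects gives $X \in \cL_1 \vee (\cL_2 \cap \cL_3)$.
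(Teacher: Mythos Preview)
Your proof is correct and follows the same skeleton as the paper's: reduce to $n=2$ by induction, and settle that case by combining Prop.\ \ref{prop:R-equiv} with the identification of the center of a recollement as the fibre product of centers. The paper's argument is much terser: it cites Gabriel (\cite{Gab} p.\ 446) for the latter identification and leaves the induction entirely implicit.

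The one genuine difference is your treatment of the distributive identity $\cL_1 \vee (\cL_2 \cap \ldots \cap \cL_n) = \bigcap_{i \geq 2}(\cL_1 \vee \cL_i)$. You prove this from scratch for stable localizing subcategories, using the splitting of injectives from Prop.\ \ref{prop: Gab375Cor2} and the structure of $\cL_1 \vee \cL_2$ modulo the intersection from Cor.\ \ref{cor:stableL3}. This works, but it is more than you need: the paper records in Prop.\ \ref{prop:distributive} (citing Golan) that the \emph{entire} lattice $\mathbf{L}(\cC)$ is distributive, with no stability hypothesis. Invoking that result would collapse the second half of your argument to one line and also clarifies that stability is used only to feed Prop.\ \ref{prop:R-equiv}, not for the combinatorics of the induction. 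Your self-contained route has the virtue of staying within the paper's own toolkit, but the Golan reference is the cleaner way to close the gap.
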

\begin{proof}
By induction w.r.t.\ $n$ we only need to establish the case $n = 2$. By \cite{Gab} p.\ 446 we always have the exact sequence
\begin{equation*}
\xymatrix{
  0 \ar[r] & Z \big((\cC/\cL_1) \underset{\cC / (\cL_1 \vee \cL_2)}{\prod}{} (\cC/\cL_2) \big) \ar[r]^{} & Z(\cC/\cL_1) \times Z(\cC/\cL_2) \ar@<1ex>[r]{} \ar@<-1ex>[r]{} & Z(\cC / (\cL_1 \vee \cL_2)) \ . }
\end{equation*}
But under the stability assumption we may, by Prop.\ \ref{prop:R-equiv}, identify the left hand term with $Z \big(\cC /(\cL_1 \cap \cL_2) \big)$.
\end{proof}

\section{The central sheaf}\label{sec:central-sheaf}

We will make precise in which way $\cL \mapsto Z(\cC/\cL)$ is a sheaf.

The set $\mathbf{L}(\cC)$ partially ordered by inclusion together with $\cap$ and $\vee$ is a lattice. By Prop.\ \ref{prop:vee-stable} the subset $\mathbf{L}^{st}(\cC) \subseteq \mathbf{L}(\cC)$ of all stable localizing subcategories is a sublattice.

\begin{proposition}\label{prop:distributive}
   The lattice $\mathbf{L}(\cC)$ is distributive.
\end{proposition}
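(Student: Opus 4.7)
The inclusion $(\cL_1 \cap \cL_2) \vee (\cL_1 \cap \cL_3) \subseteq \cL_1 \cap (\cL_2 \vee \cL_3)$ is immediate, so the content is the reverse inclusion. My plan is to introduce the full subcategory
$$\cG := \{X \in \cC : \text{every subquotient of } X \text{ lying in } \cL_1 \text{ also lies in } \cM\},$$
where $\cM := (\cL_1 \cap \cL_2) \vee (\cL_1 \cap \cL_3)$. Once $\cG$ is shown to be a localizing subcategory of $\cC$ containing both $\cL_2$ and $\cL_3$, it automatically contains $\cL_2 \vee \cL_3$; then, applying the defining property of $\cG$ to any $X \in \cL_1 \cap (\cL_2 \vee \cL_3)$, viewed as the subquotient $X/0$ of itself, yields $X \in \cM$.

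That $\cL_2, \cL_3 \subseteq \cG$ is clear, since any subquotient of an object in $\cL_j$ remains in $\cL_j$, and if it also lies in $\cL_1$ then it lies in $\cL_1 \cap \cL_j \subseteq \cM$. Closure of $\cG$ under subobjects and quotients is tautological, because any subquotient of a subobject or a quotient of $X$ is again a subquotient of $X$. For closure under extensions, given a short exact sequence $0 \to X \to Y \to Z \to 0$ with $X, Z \in \cG$ and a subquotient $W = B/A$ of $Y$ lying in $\cL_1$, the standard isomorphism theorems yield an exact sequence
$$0 \rightarrow (B \cap X)/(A \cap X) \rightarrow W \rightarrow B/(A + (B \cap X)) \rightarrow 0$$
whose outer terms are respectively a subquotient of $X$ and a subquotient of $Z$; both are a subobject or a quotient of $W \in \cL_1$ and hence lie in $\cL_1$, so both lie in $\cM$. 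Therefore $W \in \cM$.

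The main technical step is closure of $\cG$ under arbitrary direct sums. Given $X_i \in \cG$ for $i \in I$ and a subquotient $W = B/A$ of $X := \bigoplus_{i \in I} X_i$ lying in $\cL_1$, the plan is to use the AB5 axiom of the Grothendieck category $\cC$ to write $B$ as the directed union of its subobjects $B_F := B \cap X_F$, where $X_F := \bigoplus_{i \in F} X_i$ ranges over finite subsets $F \subseteq I$. Closure of $\cG$ under finite direct sums follows from the extension case, so each $W_F := (B_F + A)/A \cong B_F/(A \cap X_F)$, being a subquotient of $X_F \in \cG$ that is itself a subobject of $W \in \cL_1$, lies in $\cM$. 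Since $W$ is the directed union of the $W_F$ and $\cM$, being localizing, is closed under directed unions, $W \in \cM$, completing the argument. The only obstacle I foresee is keeping the subquotient bookkeeping straight, in particular identifying the outer terms in the extension step as subquotients of $X$ and $Z$ and verifying the AB5-based directed union description of $B$ inside $X$.
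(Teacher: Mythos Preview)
Your argument is correct. The subquotient bookkeeping in the extension step works as you indicate: identifying $X$ with its image in $Y$, the kernel term $(B\cap X)/(A\cap X)$ is visibly a subquotient of $X$, while the cokernel $B/(A+(B\cap X))$ is a quotient of $B/(B\cap X)\cong (B+X)/X\subseteq Y/X=Z$, hence a subquotient of $Z$. The direct-sum step is also fine: AB5 gives $B=\bigcup_F(B\cap X_F)$ inside $X=\bigcup_F X_F$, and then $W=\bigcup_F W_F$ with each $W_F\in\cM$, so $W\in\cM$ since a localizing subcategory is closed under filtered colimits (being closed under direct sums and quotients).

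This is, however, a genuinely different route from the paper's proof. The paper follows Golan and reduces distributivity to the statement that an object which is both $\cL_1$-torsion-free and $\cL_2$-torsion-free is automatically $(\cL_1\vee\cL_2)$-torsion-free; this is then proved by observing that if $X$ is $\cL_i$-torsion-free then $\cL_i\subseteq\cL(X)$, the localizing subcategory cogenerated by the injective hull $E(X)$, whence $\cL_1\vee\cL_2\subseteq\cL(X)$ and $X$ is $(\cL_1\vee\cL_2)$-torsion-free. That argument is shorter but leans on the existence of injective hulls and the cogenerator description of localizing subcategories. Your approach is more hands-on and entirely internal to the lattice of subobjects together with AB5; it never invokes injectives or torsion-free classes, and would go through in any AB5 abelian category with a generator. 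The trade-off is length: your auxiliary class $\cG$ requires four separate closure verifications, whereas the paper's proof dispatches the whole thing in a few lines once the reduction is accepted.
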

\begin{proof}
The proof given in \cite{Go2} Prop.\ 29.1 in case $\cC = \Mod(R)$ for any ring $R$ works in general. For the convenience of the reader we provide one more detail. For avoiding confusion we first point out that Golan defines torsion theories always to be hereditary and therefore to correspond to localizing subcategories. The proof in loc.\ cit. reduces the assertion to the following claim: Suppose that an object $X$ in $\cC$ is $\cL_i$-torsion-free for $i = 1, 2$; then it is also $\cL_1 \vee \cL_2$-torsion-free. To see this let $\cL(X)$ denoted the localizing subcategory cogenerated by the injective hull $E(X)$. Since $X$ is $\cL_i$-torsion-free one has $\cL_i \subseteq \cL(X)$. It follows that $\cL_1 \vee \cL_2 \subseteq \cL(X)$. But by definition $E(X)$ is $\cL(X)$-torsion-free and hence is a fortiori $\cL_1 \vee \cL_2$-torsion-free. Therefore $X$ must be $\cL_1 \vee \cL_2$-torsion-free.
\end{proof}

We now view $\mathbf{L}^{st}(\cC)$ as a category with objects being the elements of $\mathbf{L}^{st}(\cC)$ and where the morphisms $\cL_2 \rightarrow \cL_1$ are given by the inclusions $\cL_1 \subseteq \cL_2$. For $\cL$ and $\cL_1, \ldots, \cL_n$ in $\mathbf{L}^{st}(\cC)$ we call $\{\cL_i\}_{1 \leq i \leq n}$ a covering of $\cL$ if $\cL = \bigcap_i \cL_i$.

\begin{lemma}\label{lem:site}
   With the above notion of coverings the category $\mathbf{L}^{st}(\cC)$ is a Grothendieck site.
\end{lemma}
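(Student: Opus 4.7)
The plan is to verify the three axioms of a Grothendieck pretopology on $\mathbf{L}^{st}(\cC)$: (T1) identity morphisms give coverings; (T2) coverings are stable under composition (transitivity); and (T3) coverings are stable under base change, together with existence of the relevant fibre products inside the category.

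My first step is to identify the fibre products. A morphism $\cL_2 \to \cL_1$ in $\mathbf{L}^{st}(\cC)$ is by definition an inclusion $\cL_1 \subseteq \cL_2$, so the category is simply the poset of stable localizing subcategories ordered by reverse inclusion. In such a poset viewed as a category, the fibre product of two arrows $\cL_i \to \cL$ and $\cL' \to \cL$ (corresponding to $\cL \subseteq \cL_i$ and $\cL \subseteq \cL'$) is the least upper bound in the reverse-inclusion order, i.e.\ the smallest stable localizing subcategory containing both $\cL_i$ and $\cL'$. This is precisely $\cL_i \vee \cL'$, which lies in $\mathbf{L}^{st}(\cC)$ by Prop.\ \ref{prop:vee-stable}, so these fibre products exist inside $\mathbf{L}^{st}(\cC)$.

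Axioms (T1) and (T2) are then immediate. For (T1), $\{\cL \to \cL\}$ is a covering since $\cL = \cL$ (and the category is a poset, so identities are the only isomorphisms). For (T2), given coverings $\cL = \bigcap_i \cL_i$ and $\cL_i = \bigcap_j \cL_{ij}$ for each $i$, we have
\[
\bigcap_{i,j} \cL_{ij} \;=\; \bigcap_i \Bigl(\bigcap_j \cL_{ij}\Bigr) \;=\; \bigcap_i \cL_i \;=\; \cL.
\]

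The only substantive step is (T3). Given a covering $\{\cL_i\}_{i=1}^n$ of $\cL$ and a morphism $\cL' \to \cL$ (i.e.\ $\cL \subseteq \cL'$), the induced family is $\{\cL_i \vee \cL' \to \cL'\}_{i=1}^n$, and I must show it covers $\cL'$, that is, $\bigcap_i (\cL_i \vee \cL') = \cL'$. The distributivity of $\mathbf{L}(\cC)$ established in Prop.\ \ref{prop:distributive}, extended from its binary form to the finite form by a straightforward induction on $n$, gives
\[
\bigcap_{i=1}^n (\cL_i \vee \cL') \;=\; \Bigl(\bigcap_{i=1}^n \cL_i\Bigr) \vee \cL' \;=\; \cL \vee \cL' \;=\; \cL',
\]
the last equality because $\cL \subseteq \cL'$. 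This is the sole place where distributivity is genuinely used, and it is the crux of the lemma; the remaining verifications are purely formal consequences of the lattice structure on $\mathbf{L}^{st}(\cC)$.
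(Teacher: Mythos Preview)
Your proof is correct and follows essentially the same approach as the paper: identify fibre products as $\cL_i \vee \cL'$ (using Prop.~\ref{prop:vee-stable}), observe that composition of coverings is trivial, and reduce base change to the distributivity of $\mathbf{L}(\cC)$ from Prop.~\ref{prop:distributive}. You are simply more explicit in spelling out the three pretopology axioms and the induction extending binary distributivity to the finite form, while the paper compresses these steps into a few lines.
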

\begin{proof}
Since any diagram in $\mathbf{L}^{st}(\cC)$ of the form
\begin{equation*}
  \xymatrix{
    \cL_1 \vee \cL_2 \ar[d] \ar[r] & \cL_2 \ar[d] \\
    \cL_1 \ar[r] & \cL   }
\end{equation*}
is a fiber product diagram the category $\mathbf{L}^{st}(\cC)$ has fiber products. Obviously coverings compose into coverings. That any base change of a covering again is a covering is immediate from the distributivity in Prop.\ \ref{prop:distributive}.
\end{proof}

Obviously $\cL \mapsto Z(\cC/\cL)$ is a presheaf on the site $\mathbf{L}^{st}(\cC)$.

\begin{theorem}\label{thm:central-sheaf}
  $Z_\cC(\cL) := Z(\cC/\cL)$ is a sheaf on $\mathbf{L}^{st}(\cC)$.
\end{theorem}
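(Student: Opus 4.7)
The plan is to recognize this theorem as essentially a reformulation of Proposition \ref{prop:finite-sheaf} in the language of the Grothendieck site constructed in Lemma \ref{lem:site}. In particular, no new categorical work should be required: the substantive step was already the identification $R : \cC/(\cL_0 \cap \cL_1) \xrightarrow{\simeq} \cD$ of Proposition \ref{prop:R-equiv}.

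First I would confirm that $Z_\cC$ is a well-defined presheaf on the site. A morphism $\cL_2 \to \cL_1$ in $\mathbf{L}^{st}(\cC)$ is, by definition, an inclusion $\cL_1 \subseteq \cL_2$, and the discussion immediately following Remark \ref{rem:iso} provides the corresponding restriction ring homomorphism $Z(\cC/\cL_1) \to Z(\cC/\cL_2)$; functoriality in $\cL$ is immediate from the universal property of quotient categories.

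Next I would unwind the fiber products entering the sheaf axiom. In $\mathbf{L}^{st}(\cC)$ a morphism $\cM \to \cL_i$ is an inclusion $\cL_i \subseteq \cM$, so the fiber product $\cL_i \times_\cL \cL_j$ of two arrows $\cL_i \to \cL$ and $\cL_j \to \cL$ is characterized as the smallest element of $\mathbf{L}^{st}(\cC)$ containing both $\cL_i$ and $\cL_j$. By Proposition \ref{prop:vee-stable} this is exactly $\cL_i \vee \cL_j$ (which we already know is stable, so it lives in the site).

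With these identifications in place, the sheaf condition for $Z_\cC$ relative to a covering $\{\cL_i\}_{1 \le i \le n}$ of $\cL$, i.e.\ $\cL = \bigcap_{i=1}^n \cL_i$, becomes precisely the exactness of
\begin{equation*}
0 \to Z_\cC(\cL) \to \prod_{i=1}^n Z_\cC(\cL_i) \rightrightarrows \prod_{i,j} Z_\cC(\cL_i \vee \cL_j),
\end{equation*}
which is the content of Proposition \ref{prop:finite-sheaf}. There is no remaining obstacle at this point; the only delicate matter, namely replacing Gabriel's exact sequence for the recollement $\cD$ by one involving $Z(\cC/(\cL_0 \cap \cL_1))$, was already handled by Proposition \ref{prop:R-equiv} and propagated to arbitrary finite $n$ by the inductive argument given inside Proposition \ref{prop:finite-sheaf}.
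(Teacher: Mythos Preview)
Your proposal is correct and matches the paper's approach exactly: the paper's proof consists of the single sentence ``This is now immediate from Prop.\ \ref{prop:finite-sheaf}.'' Your additional unpacking of why the fiber products in $\mathbf{L}^{st}(\cC)$ are given by $\cL_i \vee \cL_j$ and why $Z_\cC$ is a presheaf is helpful elaboration, but it introduces nothing new beyond what the paper already established in Lemma \ref{lem:site} and the surrounding discussion.
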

\begin{proof}
This is now immediate from Prop.\ \ref{prop:finite-sheaf}.
\end{proof}

\section{The locally noetherian case}\label{sec:loc-noetherian}

The injective spectrum $\mathbf{Sp}(\cD)$ of a Grothendieck category is defined to be the collection of isomorphism classes of indecomposable injective objects of $\cD$. It is a set (cf.\ \cite{Pop} p.\ 331).

Throughout this section we assume that our Grothendieck category $\cC$ is locally noetherian. We will construct an alternative version of the central sheaf $Z_\cC$ which will be a sheaf on the topological space $\mathbf{Sp}(\cC)$ equipped with the so-called stable topology.

\begin{remark}\label{rem:loc-noeth}
  Let $\cL$ be any localizing subcategory of $\cC$; then $\cL$ and $\cC/\cL$ are locally noetherian, and the section functor $\cC/\cL \rightarrow \cC$ commutes with inductive limits.
\end{remark}
\begin{proof}
\cite{Gab} Cor.\ 1 on p.\ 379.
\end{proof}

For any localizing subcategory $\cL$ of a Grothendieck category $\cD$ one defines the subset
\begin{equation*}
  A(\cL) := \{ [E] \in \mathbf{Sp}(\cD) : \Hom_\cC(V,E) = 0\ \text{for any $V \in \ob(\cL)$}\}
\end{equation*}
of $\mathbf{Sp}(\cD)$. In the case of $\cC$ these subsets $A(\cL)$ form the closed subsets of a topology on $\mathbf{Sp}(\cC)$ which is called the Ziegler topology (cf.\ \cite{Her} Thm.\ 3.4). In fact, by \cite{Her} Theorems 2.8 and 3.8, the map
\begin{align}\label{f:loc-classific}
  \text{collection of all localizing} & \xrightarrow{\;\simeq\;} \text{set of all Ziegler-closed} \\
  \text{subcategories of $\cC$} &                    \text{\qquad\ subsets of $\mathbf{Sp}(\cC)$}    \nonumber \\
                          \cL & \longmapsto A(\cL)       \nonumber
\end{align}
is an inclusion reversing bijection. This means that the Ziegler-closed subsets of $\mathbf{Sp}(\cC)$ classify the localizing subcategories of $\cC$. It implies that $\cL$ can be reconstructed from $A(\cL)$ by
\begin{equation*}
  \ob(\cL) = \{ V \in \ob(\cC) : \Hom_\cC(V,E) = 0 \ \text{for all $[E] \in A(\cL)$}\}.
\end{equation*}
It also implies that
\begin{equation*}
  A(\cL_1 \cap \cL_2) = A(\cL_1) \cup A(\cL_2)  \qquad\text{and}\qquad A(\cL_1 \vee \cL_2) = A(\cL_1) \cap A(\cL_2) \ .
\end{equation*}

\begin{lemma}\label{lem:stable-1}
   For any localizing subcategory $\cL$ of $\cC$ the following are equivalent:
\begin{itemize}
  \item[(a)] $\cL$ is stable;
  \item[(b)] any indecomposable injective object of $\cC$ either lies in $\cL$ or has no non-zero subobject lying in $\cL$.
\end{itemize}
\end{lemma}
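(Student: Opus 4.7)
My plan is to prove the two implications separately, using only the standard uniformity of indecomposable injectives for one direction and invoking the locally noetherian hypothesis (via the decomposition of injectives into indecomposables) for the other.

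For (a) $\Rightarrow$ (b), I would take an indecomposable injective $E$ with $t_\cL(E) \neq 0$ and show that $E$ itself lies in $\cL$. The key point is that any indecomposable injective in a Grothendieck category is \emph{uniform}: if two non-zero subobjects $X, Y \subseteq E$ had $X \cap Y = 0$, then an injective hull of $X$ inside $E$ would split off as a non-trivial direct summand (using that $E$ is injective), contradicting indecomposability. Consequently every non-zero subobject of $E$ is essential, so the inclusion $t_\cL(E) \hookrightarrow E$ is an essential extension of an object of $\cL$, and by stability $E \in \cL$. This direction does not use the locally noetherian hypothesis.

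For (b) $\Rightarrow$ (a), let $X \in \cL$ be non-zero and let $E$ be an injective hull of $X$ in $\cC$; I need to show $E \in \cL$. Here I would invoke the locally noetherian hypothesis through Matlis's theorem (Gabriel, \cite{Gab} Thm.~1 on p.~388): every injective object of $\cC$ decomposes as a direct sum $E \cong \bigoplus_{i \in I} E_i$ of indecomposable injectives. Since $X$ is essential in $E$, every non-zero direct summand of $E$ meets $X$ non-trivially, so for each $i$ the subobject $E_i \cap X$ of $E_i$ is non-zero and lies in $\cL$ (being a subobject of $X$). By hypothesis (b), each $E_i$ then lies in $\cL$. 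Since $\cL$ is localizing, hence closed under arbitrary direct sums, we conclude $E = \bigoplus_i E_i \in \cL$, so $\cL$ is stable.

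The only mild obstacle is to make sure that the decomposition statement from locally noetherian theory is applied correctly: it needs to hold in $\cC$ itself (not only in quotients), which is precisely what Gabriel's theorem provides since $\cC$ is locally noetherian. Uniformity of indecomposable injectives is purely formal and needs no finiteness hypothesis.
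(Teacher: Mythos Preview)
Your proof is correct and follows essentially the same argument as the paper: for (a)$\Rightarrow$(b) the paper also observes that an indecomposable injective $E$ with $t_\cL(E)\neq 0$ is an injective hull of $t_\cL(E)$ (i.e.\ uses uniformity implicitly), and for (b)$\Rightarrow$(a) the paper also decomposes the injective hull of $V\in\cL$ as $\bigoplus_i E_i$ via the locally noetherian hypothesis and concludes $E_i\in\cL$ from $E_i\cap V\neq 0$. The only discrepancy is bibliographic: the Matlis decomposition is \cite{Gab} Thm.~2 on p.~388, not Thm.~1.
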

\begin{proof}
We argue similarly as in \cite{Gol} Prop.\ 11.3.

$(a) \Longrightarrow (b)$: Let $E$ be an indecomposable injective object in $\cC$. Suppose that $t_\cL(E) \neq 0$. Then $E$ is an injective hull of $t_\cL(E)$ and hence, by stability, is contained in $\cL$.

$(b) \Longrightarrow (a)$: Since $\cC$ is locally noetherian we may write an injective hull $E(V)$ of an object $V$ lying in $\cL$ as a direct sum $E(V) = \oplus_{i \in I} E_i$ of indecomposable injective objects $E_i$. Since $E_i \cap V \neq 0$ lies in $\cL$ for any $i \in I$ we see that all $E_i$ and hence $E(V)$ lie in $\cL$.
\end{proof}

\begin{corollary}\label{cor:stable}
   Let $\cL$ be a stable localizing subcategory of $\cC$; then
\begin{equation*}
  A(\cL) = \{[E] \in \mathbf{Sp}(\cC) : E \not\in \ob(\cL)\}  \ .
\end{equation*}
\end{corollary}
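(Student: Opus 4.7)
The plan is to deduce both inclusions directly from Lemma \ref{lem:stable-1}, using the stability hypothesis to convert the $\Hom$-vanishing condition defining $A(\cL)$ into the subobject condition (b) of that lemma.

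For the inclusion $A(\cL) \subseteq \{[E] : E \notin \ob(\cL)\}$, I would argue by contraposition: if an indecomposable injective $E$ lies in $\cL$, then $\id_E \in \Hom_\cC(E,E)$ is a non-zero element with $E \in \ob(\cL)$, so $[E] \notin A(\cL)$. Note that this direction does not even require stability or local noetherianity.

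For the reverse inclusion, fix an indecomposable injective $E$ with $E \notin \ob(\cL)$ and let $V \in \ob(\cL)$. Given any $\varphi : V \to E$, its image $\im(\varphi)$ is a quotient of $V$, hence lies in $\cL$ (since $\cL$ is closed under quotients), and is a subobject of $E$. By Lemma \ref{lem:stable-1}, the stability of $\cL$ together with the hypothesis $E \notin \ob(\cL)$ forces $E$ to have no non-zero subobject in $\cL$; therefore $\im(\varphi) = 0$ and $\varphi = 0$. This shows $\Hom_\cC(V,E) = 0$ for all $V \in \ob(\cL)$, so $[E] \in A(\cL)$.

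The only substantive ingredient is Lemma \ref{lem:stable-1}, whose proof in turn uses local noetherianity to decompose injective hulls into indecomposable summands; everything else here is formal. So I do not anticipate any real obstacle — the corollary is essentially a reformulation of part (b) of Lemma \ref{lem:stable-1} in the language of $A(\cL)$.
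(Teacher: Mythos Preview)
Your argument is correct and is exactly the intended one: the paper states this as an immediate corollary of Lemma~\ref{lem:stable-1} without further proof, and your two inclusions spell out precisely how condition~(b) of that lemma translates into the description of $A(\cL)$.
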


The following is a straightforward generalization of \cite{Lou} Prop.\ 4.

\begin{lemma}\label{lem:stable-2}
   For a subset $A \subseteq \mathbf{Sp}(\cC)$ the following are equivalent:
\begin{itemize}
  \item[(a)] $A = A(\cL)$ for a stable localizing subcategory $\cL$ of $\cC$;
  \item[(b)] if $[E] \in \mathbf{Sp}(\cC)$ satisfies $\Hom_\cC(E,E') \neq 0$ for some $[E'] \in A$ then $[E] \in A$.
\end{itemize}
\end{lemma}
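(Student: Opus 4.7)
The plan is to exploit the classification \eqref{f:loc-classific} together with the characterization of stability given by Lemma \ref{lem:stable-1}, so that condition (b) becomes a direct translation of the dichotomy ``$E \in \cL$ or $E$ is $\cL$-torsion-free''.

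For $(a) \Rightarrow (b)$, I would suppose $A = A(\cL)$ with $\cL$ stable, and take $[E], [E'] \in \mathbf{Sp}(\cC)$ with $[E'] \in A$ and a non-zero morphism $f : E \to E'$. Since $[E'] \in A(\cL)$, the object $E'$ is $\cL$-torsion-free, so its non-zero subobject $\im(f)$ is not in $\cL$. But $\im(f)$ is also a quotient of $E$, so $E \notin \ob(\cL)$. By Lemma \ref{lem:stable-1} and the stability of $\cL$, $E$ must then be $\cL$-torsion-free, i.e.\ $[E] \in A(\cL) = A$.

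For $(b) \Rightarrow (a)$, I would define
\begin{equation*}
\cL := \{\, V \in \ob(\cC) : \Hom_\cC(V, E) = 0 \text{ for every } [E] \in A \,\}
\end{equation*}
and verify, in order: (i) $\cL$ is a localizing subcategory; (ii) $A(\cL) = A$; (iii) $\cL$ is stable. Step (i) is routine: closure under subobjects, quotients, small direct sums, and extensions each follow from standard (left-)exactness properties of $\Hom_\cC(-,E)$. For (ii), the inclusion $A \subseteq A(\cL)$ is tautological from the definition of $\cL$. Conversely, if $[E] \in A(\cL)$, then $E \notin \cL$ (otherwise $\Hom_\cC(E,E) = 0$ would force $E = 0$, contradicting indecomposability); unpacking the definition of $\cL$ then produces some $[E'] \in A$ with $\Hom_\cC(E,E') \neq 0$, whence $[E] \in A$ by (b). For (iii), I apply Lemma \ref{lem:stable-1}: given $[E] \in \mathbf{Sp}(\cC)$, if $[E] \in A$ then $[E] \in A(\cL)$ directly says $E$ is $\cL$-torsion-free, while if $[E] \notin A$ then the contrapositive of (b) yields $\Hom_\cC(E,E') = 0$ for every $[E'] \in A$, so $E \in \cL$.

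I do not anticipate a genuine obstacle here; the whole argument rests on the duality between the ``left-orthogonal'' $\cL$ of $A$ and the ``right-orthogonal'' $A(\cL)$ of $\cL$, with Lemma \ref{lem:stable-1} supplying the dictionary between stability of $\cL$ and the closure condition (b) on $A$.
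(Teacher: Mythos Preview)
Your proposal is correct and follows essentially the same route as the paper: both directions use Lemma \ref{lem:stable-1} as the bridge between stability of $\cL$ and the dichotomy for indecomposable injectives, and for $(b)\Rightarrow(a)$ both take $\cL$ to be the subcategory cogenerated by $\{E' : [E']\in A\}$, verify $A(\cL)=A$ via (b), and then read off stability from the dichotomy. The only cosmetic differences are that you spell out why $\cL$ is localizing and why $E\notin\cL$ (via $\Hom_\cC(E,E)\neq 0$), whereas the paper absorbs these into the phrase ``cogenerated by'' and the definition of $A(\cL)$.
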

\begin{proof}
$(a) \Longrightarrow (b)$: Let $[E'] \in A(\cL)$ such that $\Hom_\cC(E,E') \neq 0$. Then $E$ does not lie in $\cL$. Since $\cL$ is stable Lemma \ref{lem:stable-1} applies and tells us that $E$ does not have any non-zero subobject lying in $\cL$. Hence $[E] \in A(\cL) = A$.

$(b) \Longrightarrow (a)$: Let $\cL$ be the localizing subcategory of $\cC$ cogenerated by the $E'$ for $[E'] \in A$. This means that $\cL$ is the full subcategory of those objects $V$ in $\cC$ which satisfy $\Hom_\cC(V,E') = 0$ for any $[E'] \in A$. It is immediate that $A \subseteq A(\cL)$. Consider any $[E] \in A(\cL)$. Then $E$ cannot lie in $\cL$. Hence there must exist an $[E'] \in A$ such that $\Hom_\cC(E,E') \neq 0$. It follows from (b) that $[E] \in A$. This shows that $A = A(\cL)$. To establish that $\cL$ is stable we use Lemma \ref{lem:stable-1}. We have just seen that the $E$ which do not lie in $\cL$ must have $[E] \in A(\cL)$. By the very definition of $A(\cL)$ such $E$ do not have a non-zero subobject lying in $\cL$.
\end{proof}

A subset $A \subseteq \mathbf{Sp}(\cC)$ will be called stable if it is of the form $A = A(\cL)$ for some stable localizing subcategory $\cL$ of $\cC$. It is clear, for example from Lemma \ref{lem:stable-2}, that arbitrary intersections and unions of stable subsets are stable again. Therefore the stable subsets are the open subsets for a topology which we call the stable topology of $\mathbf{Sp}(\cC)$.

For any stable subset $A \subseteq \mathbf{Sp}(\cC)$ let $\cL_A$ denote the stable localizing subcategory of $\cC$ such that $A(\cL_A) = A$. The map $A \mapsto \cL_A$ is inclusion reversing. Hence, for $A_1 \subseteq A_2$, we have a quotient functor $\cC/\cL_{A_2} \rightarrow \cC/\cL_{A_1}$. This means that
\begin{equation}\label{f:presheaf}
  A \ \text{stable} \longmapsto \cZ_\cC(A) := Z(\cC/\cL_A)
\end{equation}
is a presheaf of commutative rings on $\mathbf{Sp}(\cC)$ for the stable topology. Our goal in this section is to prove the following theorem.

\begin{theorem}\label{thm:sheaf}
  $\cZ_\cC$ is a sheaf on $\mathbf{Sp}(\cC)$ for the stable topology.
\end{theorem}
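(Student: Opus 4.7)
The plan is to combine the finite sheaf condition (Prop.~\ref{prop:finite-sheaf}) with a limit argument that exploits local noetherianity. Given a stable cover $A = \bigcup_{i \in I} A_i$, set $\cL := \cL_A$ and $\cM_F := \bigcap_{j \in F} \cL_{A_j}$ for finite $F \subseteq I$; each $\cM_F$ is stable, $\{\cM_F\}$ is cofiltered under reverse inclusion, and $\cL = \bigcap_F \cM_F$. The sheaf condition for $\{A_i\}$ is equivalent (via Prop.~\ref{prop:finite-sheaf}, assembling compatible families over finite sub-indexings) to the natural map
\[
Z(\cC/\cL) \longrightarrow \varprojlim_F Z(\cC/\cM_F)
\]
being an isomorphism.

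Injectivity is immediate: if $z \in Z(\cC/\cL)$ maps to $0$ in every $Z(\cC/\cM_F)$, then for each $X$ in $\cC/\cL$ the image of $z_X$ lies in $\cM_F/\cL$ for every $F$. An object $Y$ of $\cC/\cL$ lying in every $\cM_F/\cL$ has $S_\cL(Y) \in \bigcap_F \cM_F = \cL$ by Remark~\ref{rem:iso}.iii, hence $Y = T_\cL S_\cL(Y) = 0$ by Lemma~\ref{lem:quotGroth}(2); thus $z_X = 0$ for all $X$.

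For surjectivity, fix $(z_F) \in \varprojlim_F Z(\cC/\cM_F)$. Since $\cC/\cL$ is locally noetherian (Remark~\ref{rem:loc-noeth}), a natural endomorphism of $\id_{\cC/\cL}$ is determined by its values on noetherian objects, so it suffices to produce $z_N \in \End_{\cC/\cL}(T_\cL N)$ for each noetherian $N$ in $\cC$ compatibly; after replacing $N$ by $N/t_\cL(N)$ one may assume $N$ is $\cL$-torsion-free. The main input is the \emph{finite} decomposition of the injective hull: by Matlis--Gabriel for locally noetherian categories $E(N) = E_1 \oplus \cdots \oplus E_k$ with each $E_i$ indecomposable injective, and noetherianity of $N$ together with essentiality of $N \subseteq E(N)$ forces the decomposition to be finite. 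Because $N$ is $\cL$-torsion-free every $E_i \notin \cL$; since $A(\cL) = \bigcup_F A(\cM_F)$ each $E_i$ lies outside some $\cM_{F_i}$, and for $F_0 := \bigcup_{i=1}^k F_i$ all the $E_i$ lie outside $\cM_F$ (so $N$ is also $\cM_F$-torsion-free) for every $F \supseteq F_0$.

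Now comes the core computation. Using Prop.~\ref{prop: Gab375Cor2} and the adjunction $T_\cM \dashv S_\cM$ one checks that, for any stable $\cM \supseteq \cL$ in which $N$ is $\cM$-torsion-free, $S_\cM T_\cM N$ is an essential extension of $N$ and therefore sits canonically inside $E(N)$ with cokernel $(S_\cM T_\cM N)/N \in \cM$. As subobjects of $E(N)$ one has $S_\cL T_\cL N = \bigcap_{F \supseteq F_0} S_{\cM_F} T_{\cM_F} N$: the cokernel over $N$ of the intersection embeds in every $\cM_F$-cokernel and therefore lies in $\bigcap_F \cM_F = \cL$, giving one inclusion, and the reverse is immediate from $\cL \subseteq \cM_F$. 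Applying $\Hom_\cC(N, -)$, which preserves intersections of subobjects, together with the adjunction isomorphism $\End_{\cC/\cM}(T_\cM N) = \Hom_\cC(N, S_\cM T_\cM N)$, yields
\[
\End_{\cC/\cL}(T_\cL N) \;=\; \bigcap_{F \supseteq F_0} \End_{\cC/\cM_F}(T_{\cM_F} N) \;=\; \varprojlim_F \End_{\cC/\cM_F}(T_{\cM_F} N),
\]
the last equality because the transition maps in the inverse system are, as subgroups of the common ambient $\Hom_\cC(N, E(N))$, mere inclusions, and $\{F \supseteq F_0\}$ is cofinal in the indexing poset. Hence the compatible data $((z_F)_{T_{\cM_F} N})_F$ defines a unique $z_N \in \End_{\cC/\cL}(T_\cL N)$; naturality in $N$ is inherited from that of each $z_F$. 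The chief technical obstacle is the finite Matlis--Gabriel decomposition $E(N) = \bigoplus E_i$ and the bookkeeping needed to keep the nested chain $N \subseteq S_\cL T_\cL N \subseteq S_{\cM_F} T_{\cM_F} N \subseteq E(N)$ canonical throughout; once this is secured the remaining verifications are formal.
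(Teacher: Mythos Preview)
Your argument is correct and takes a genuinely different route from the paper's. The paper does not pass to a limit over finite sub-covers at all; instead it introduces an auxiliary presheaf $\cF$ with $\cF(A) \subseteq \prod_{x \in A} Z(\End_\cC(E_x))$ consisting of tuples $(z_x)$ satisfying $z_y v = v z_x$ for every $v \in \Hom_\cC(E_x,E_y)$, and observes that the sheaf property for $\cF$ is essentially immediate from the characterisation of stable subsets in Lemma~\ref{lem:stable-2}. All the work then goes into building an isomorphism $\cZ_\cC \cong \cF$: a Bernstein-type identification $Z(\cC/\cL_A) \cong Z(\End_{\cC/\cL_A}(E^\oplus_{\cC/\cL_A}))$ via good cogenerators (Thm.~\ref{thm:BerCent}), an isomorphism $\End_\cC(E_A) \cong \End_{\cC/\cL_A}(E^\oplus_{\cC/\cL_A})$ (Cor.~\ref{cor:Qiso}), and a matrix-centre computation identifying $Z(\End_\cC(\oplus_x E_x))$ with $\cF(A)$ (Lemma~\ref{lem:GenCentre}). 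Your approach instead recycles the finite sheaf condition already proved in Prop.~\ref{prop:finite-sheaf}; the noetherian hypothesis enters exactly where you say, through the \emph{finite} Matlis decomposition of $E(N)$, which produces the cofinal $F_0$ allowing all the localisations $S_{\cM_F}T_{\cM_F}N$ to sit compatibly inside a single $E(N)$. The paper's method has the bonus of an explicit pointwise description of the sections of $\cZ_\cC$; yours is more self-contained and avoids the cogenerator machinery, but the steps you flag as bookkeeping (canonicity of the chain $N \subseteq S_\cL T_\cL N \subseteq S_{\cM_F}T_{\cM_F}N \subseteq E(N)$, naturality with respect to morphisms of $\cC/\cL$ rather than of $\cC$, and the extension from noetherian objects to all objects via the Ind-description of a locally noetherian category) do carry real content and would deserve to be written out in a final version.
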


We first need several preparations.

\subsection{Center via injective cogenerators}

We will write an element in $Z(\cC)$ often as $z = (z_M)_M$ with $z_M = \ev_M(z)$ denoting the endomorphism of the object $M$ in $\cC$ defined by $z$ and being called the evaluation of $Z$ in $M$.

Recall (cf.\ \cite{Ste} \S IV.6) that an injective object $E$ in $\cC$ is a cogenerator of $\cC$ if and only if any non-zero object $M$ in $\cC$ has a non-zero homomorphism $M \rightarrow E$. Moreover, since $\cC$ has arbitrary products (\cite{Ste} Cor.\ X.4.4), there is then a monomorphism $M \hookrightarrow \prod_{i \in I} E$ for some index set $I$.

For any $x \in \mathbf{Sp}(\cC)$ we fix a representative $E_x$ in the isomorphism class $x = [E_x]$. We introduce the following objects
\begin{equation*}
  E_\cC^\oplus := \bigoplus_{x \in \mathbf{Sp}(\cC)} E_x  \qquad\text{and}\qquad  E_\cC^\pi := \prod_{x \in \mathbf{Sp}(\cC)} E_x \ .
\end{equation*}
Both are injective objects, the former since $\cC$ is locally noetherian (\cite{Gab} Prop.\ 6 on p.\ 387) and the latter for formal reasons.

\begin{lemma}
  $E_\cC^\oplus$ and $E_\cC^\pi$ are cogenerators of the category $\cC$.
\end{lemma}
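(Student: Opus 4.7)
The plan is to invoke the stated criterion: an injective object $E$ of $\cC$ is a cogenerator if and only if every non-zero object $M$ of $\cC$ admits a non-zero morphism $M \rightarrow E$. Since each $E_x$ is a direct summand of both $E_\cC^\oplus$ and $E_\cC^\pi$ (the former by construction of the coproduct, the latter via the canonical projection), any non-zero morphism $M \rightarrow E_x$ composes with the corresponding inclusion into $E_\cC^\oplus$ or $E_\cC^\pi$ to yield a non-zero morphism into those objects. Hence it suffices to prove that every non-zero object $M$ of $\cC$ admits a non-zero morphism into \emph{some} indecomposable injective $E_x$.

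To this end, fix a non-zero $M$ in $\cC$ and choose an injective hull $E(M)$ of $M$. Under our standing assumption that $\cC$ is locally noetherian, a theorem of Gabriel (\cite{Gab}) ensures that every injective object of $\cC$ decomposes as a direct sum of indecomposable injectives; in particular $E(M) \cong \bigoplus_{j \in J} E_{x_j}$ for some indexed family $(x_j)_{j \in J}$ of elements of $\mathbf{Sp}(\cC)$. Let $\iota : M \hookrightarrow \bigoplus_{j \in J} E_{x_j}$ denote the resulting monomorphism.

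It remains to locate an index $j_0 \in J$ for which the composition $f_{j_0} : M \rightarrow E_{x_{j_0}}$ of $\iota$ with the canonical morphism $\bigoplus_{j \in J} E_{x_j} \rightarrow \prod_{j \in J} E_{x_j}$ followed by projection to the $j_0$-th factor is non-zero. If every $f_j$ vanished, then the composite $M \rightarrow \prod_{j \in J} E_{x_j}$ would have zero projection onto each factor and would therefore itself be zero; but the canonical morphism $\bigoplus_{j \in J} E_{x_j} \rightarrow \prod_{j \in J} E_{x_j}$ is a monomorphism in any Grothendieck category, so this would force $\iota = 0$, contradicting $M \neq 0$. Hence some $f_{j_0}$ is non-zero, producing the required non-zero morphism from $M$ into the indecomposable injective $E_{x_{j_0}}$. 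The main conceptual input here is Gabriel's decomposition theorem for injectives in a locally noetherian Grothendieck category; everything else is formal manipulation of the universal properties of coproducts and products in $\cC$.
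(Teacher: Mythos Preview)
Your argument is correct and follows essentially the same route as the paper: both reduce to Matlis/Gabriel decomposition of the injective hull $E(M)$ into indecomposables and then observe that some coordinate projection must be non-zero. The only difference is cosmetic---you spell out why some $f_{j_0}$ is non-zero via the monomorphism $\bigoplus \hookrightarrow \prod$, whereas the paper simply asserts the existence of such an $i_0$.
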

\begin{proof}
Consider any object $0 \neq M \in \ob(\cC)$. By Matlis' theorem (\cite{Gab} Thm.\ 2 on p.\ 388) its injective hull $E(M)$ decomposes as a direct sum of indecomposable injective objects, i.e., we have $E(M) \cong \oplus_i E_i \hookrightarrow \prod_i E_i$ with $x_i := [E_i] \in \mathbf{Sp}(\cC)$. Hence we find an $i_0$ such that the composite homomorphism $M \xrightarrow{\subseteq} E(M) \hookrightarrow \oplus_i E_i \xrightarrow{\pr_{i_0}} E_{i_0} \cong E_{x_{i_0}} \hookrightarrow E_\cC^\oplus \hookrightarrow E_\cC^\pi$ is non-zero.
\end{proof}

\begin{definition}
  An injective cogenerator $E$ of $\cC$ is called a good cogenerator if it has a subobject (and hence a direct factor) which is isomorphic to $E_\cC^\oplus$.
\end{definition}

Obviously $E_\cC^\oplus$ and $E_\cC^\pi$ are good cogenerators. The reason for this definition is the following fact.

\begin{lemma}\label{lem:good-co}
  Let $E$ be a good cogenerator of $\cC$; then, for any object $M$ in $\cC$, there is an index set $I$ and a monomorphism $M \hookrightarrow \oplus_{i \in I} E$.
\end{lemma}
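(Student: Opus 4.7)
The plan is to combine Matlis' decomposition theorem for injective objects in a locally noetherian Grothendieck category with the defining property of a good cogenerator, then embed $M$ into its injective hull.

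First I would fix $M \in \ob(\cC)$ and take an injective hull $M \hookrightarrow E(M)$. Since $\cC$ is locally noetherian, Matlis' theorem (\cite{Gab} Thm.\ 2 on p.\ 388), as invoked already in the proof of the preceding lemma, yields a decomposition
\begin{equation*}
  E(M) \;\cong\; \bigoplus_{j \in J} E_{x_j}
\end{equation*}
into indecomposable injective objects, where each $x_j = [E_{x_j}] \in \mathbf{Sp}(\cC)$.

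Next I would exploit the hypothesis that $E$ is a good cogenerator: by definition $E_\cC^\oplus = \bigoplus_{x \in \mathbf{Sp}(\cC)} E_x$ is isomorphic to a subobject of $E$ which is simultaneously a direct factor, so there exists a split monomorphism $E_\cC^\oplus \hookrightarrow E$. In particular, for each $j \in J$ the projection onto the $x_j$-summand composed with this inclusion gives a monomorphism $\iota_j : E_{x_j} \hookrightarrow E$.

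Finally I would assemble these into a monomorphism $\bigoplus_{j\in J}\iota_j : \bigoplus_{j \in J} E_{x_j} \hookrightarrow \bigoplus_{j \in J} E$. This is a monomorphism because direct sums are exact in any Grothendieck category. Composing with $M \hookrightarrow E(M) \cong \bigoplus_{j \in J} E_{x_j}$ and taking $I := J$ yields the desired monomorphism $M \hookrightarrow \bigoplus_{i \in I} E$. There is no real obstacle here: once Matlis' theorem is invoked and the definition of \emph{good} cogenerator is unpacked, the lemma is essentially formal, the only points worth checking being that a direct factor gives a split monomorphism into $E$ and that the direct sum of monomorphisms is a monomorphism in $\cC$.
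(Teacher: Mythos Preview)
Your argument is correct and follows essentially the same route as the paper: Matlis' decomposition of $E(M)$, the embedding of each indecomposable summand into $E$ via the direct-factor $E_\cC^\oplus \hookrightarrow E$, and exactness of direct sums. One small wording slip: in your third paragraph you want the canonical \emph{inclusion} of the $x_j$-summand into $E_\cC^\oplus$, not the projection onto it.
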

\begin{proof}
By Matlis' theorem an injective hull $E(M)$ of $M$ is of the form $E(M) \cong \oplus_{x \in \mathbf{Sp}(\cC)} \oplus_{i \in I_x} E_x$ for appropriate index sets $I_x$. By embedding all $I_x$ into a common index set $I$ the right hand side is contained in $\oplus_{i \in I} E_\cC^\oplus$. We obtain the sequence of monomorphisms $M \hookrightarrow E(M) \hookrightarrow \oplus_{i \in I} E_\cC^\oplus \hookrightarrow \oplus_{i \in I} E$, where the last one exists by our assumption that $E$ is good.
\end{proof}

Our goal in this section is to prove the following result.

\begin{theorem}\label{thm:BerCent}
  The evaluation map $\ev_E : Z(\cC) \xrightarrow{\cong} Z(\End_\cC(E))$, for any good cogenerator $E$ of $\cC$, is an isomorphism.
\end{theorem}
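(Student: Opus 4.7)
The plan is to establish injectivity and surjectivity of $\ev_E$ separately. A recurring tool is the following \emph{detection principle}, valid because $E$ is an injective cogenerator: for morphisms $g_1, g_2 : A \to B$ in $\cC$, one has $g_1 = g_2$ provided $f g_1 = f g_2$ for every $f : B \to E$ (otherwise the image of $g_1 - g_2$ would admit a nonzero map to $E$ by the cogenerator property, which extends to $B$ by injectivity of $E$, contradicting $f(g_1 - g_2) = 0$). As a first application: for $z \in Z(\cC)$ and any $\alpha \in \End_\cC(E)$, naturality of $z$ applied to $\alpha$ gives $z_E \alpha = \alpha z_E$, so $\ev_E$ is a well-defined ring homomorphism into $Z(\End_\cC(E))$. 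Moreover, if $z_E = 0$ then $f z_M = z_E f = 0$ for every $f : M \to E$, and the detection principle forces $z_M = 0$ for all $M$, which is injectivity.

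Surjectivity rests on the following key identity. Fix $\phi \in Z(\End_\cC(E))$, and for each set $J$ let $\phi^{(J)}$ denote the unique endomorphism of $\bigoplus_{j \in J} E$ with $\phi^{(J)} \iota_j = \iota_j \phi$ for all $j \in J$. I claim $\phi^{(J)} k = k \phi$ for every morphism $k : E \to \bigoplus_{j \in J} E$. By the detection principle, it suffices to show $f \phi^{(J)} k = f k \phi$ for every $f : \bigoplus_{j \in J} E \to E$. Since each $f \iota_j$ lies in $\End_\cC(E)$, centrality of $\phi$ yields $f \phi^{(J)} \iota_j = f \iota_j \phi = \phi f \iota_j$ for every $j$, and hence $f \phi^{(J)} = \phi f$ by the coproduct universal property. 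Composing with $k$ and using centrality again, now on $fk \in \End_\cC(E)$, gives $f \phi^{(J)} k = \phi (fk) = (fk) \phi = fk\phi$, as required.

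Given this, surjectivity is completed as follows. Use Lemma \ref{lem:good-co} to fix a monomorphism $\iota_M : M \hookrightarrow \bigoplus_{i \in I} E$, embed the cokernel $(\bigoplus_{i \in I} E)/\iota_M(M)$ into some $\bigoplus_{j \in J} E$, and let $\pi : \bigoplus_I E \to \bigoplus_J E$ be the composition, so $\pi \iota_M = 0$. The coproduct universal property of $\bigoplus_I E$ reduces the identity $\pi \phi^{(I)} = \phi^{(J)} \pi$ to checking it on each $\iota_i$, which is precisely the key identity applied to $\pi \iota_i : E \to \bigoplus_J E$. Hence $\pi \phi^{(I)} \iota_M = \phi^{(J)} \pi \iota_M = 0$, so $\phi^{(I)} \iota_M$ factors as $\iota_M z_M$ for a unique $z_M \in \End_\cC(M)$. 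For any $f : M \to E$, extend $f$ to $\tilde f : \bigoplus_I E \to E$ by injectivity of $E$; since each $\tilde f \iota_i \in \End_\cC(E)$, the same coproduct-plus-centrality argument gives $\phi \tilde f = \tilde f \phi^{(I)}$, from which $f z_M = \tilde f \iota_M z_M = \tilde f \phi^{(I)} \iota_M = \phi \tilde f \iota_M = \phi f$. This characterization of $z_M$ is independent of $\iota_M$, and implies naturality: for any $g : M \to N$, both $z_N g$ and $g z_M$ postcompose with any $f : N \to E$ to give $\phi f g$, so they coincide by the detection principle. Taking $I = \{\ast\}$ with $\iota_E = \id_E$ gives $z_E = \phi$, completing the proof. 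The heart of the argument is the key identity, and its proof is also where the \emph{goodness} of $E$ enters: the coproduct universal property of $\bigoplus_I E$ lets us reduce relations to the inclusions $\iota_i$, a reduction that has no analogue for products.
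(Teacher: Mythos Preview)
Your proof is correct and follows the same overall architecture as the paper: embed $M$ into a coproduct $E^{(I)}$ via Lemma~\ref{lem:good-co}, act diagonally by $\phi$, and show this action restricts to $M$. The execution differs in a useful way. The paper verifies the commutation $z_{E^{(J)}}\varphi = \varphi z_{E^{(I)}}$ (Lemma~\ref{lem:HomXfXg}) by passing to matrix entries via the injective map $\mu$ of \eqref{f:mu}, and then establishes well-definedness and naturality of $z_M$ through two separate diagram chases. You instead deploy the detection principle (faithfulness of $\Hom_\cC(-,E)$) uniformly and isolate the characterization $f z_M = \phi f$ for all $f:M\to E$, from which both well-definedness and naturality are immediate; this is a genuine streamlining of the same idea. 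One small expository correction: goodness of $E$ is not used in the key identity itself (that argument works for any injective cogenerator), but rather at the embedding step where Lemma~\ref{lem:good-co} furnishes $M \hookrightarrow E^{(I)}$ and $\coker(\iota_M) \hookrightarrow E^{(J)}$ into \emph{coproducts} rather than products --- your closing sentence slightly conflates these two places.
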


For the proof we will construct a map in the opposite direction, using the definition of $Z(\cC)$. We begin with the following observation. Let $\{X_i: i \in I\}$ and $\{Y_j: j \in J\}$ be two collections of objects in $\cC$. Set $X := \oplus_{i \in I} X_i$ and $Y := \oplus_{j \in J} Y_j$. Let $\iota_i : X_i \to X$ and $\pi_j : Y \to Y_j$ be the canonical inclusions and projections. Then there is the natural composed map
\begin{equation}\label{f:mu}
   \mu : \Hom_\cC(X,Y) = \prod_i \Hom_\cC(X_i, \oplus_j Y_j) \hookrightarrow \prod_i \Hom_\cC(X_i, \prod_j Y_j) = \prod_{i,j} \Hom_\cC(X_i,Y_j)
\end{equation}
that sends $\varphi \in \Hom_\cC(X,Y)$ to its \emph{matrix} $\mu(\varphi) := (\varphi_{ij})$, where $\varphi_{ij} := \pi_j  \varphi \iota_i$ for all $i,j$. It is visibly injective. \footnote{In Grothendieck categories we have the ``usual'' map $\oplus_j Y_j \rightarrow \prod_j Y_j$ and it is a monomorphism; see Schubert H., Kategorien I, Satz 14.6.8.}

Suppose now that $E$ is a good cogenerator, and let $z \in Z(\End_\cC(E))$. To define an element of $Z(\cC)$ that corresponds to this $z$, we must construct elements $z_M \in \End_\cC(M)$ for all objects $M$ in $\cC$ that commute with all homomorphisms in $\cC$. Obviously, if $M = \oplus_{i \in I} E$ we define $z_M := \oplus_i \, z$. To simplify notation we write in the following $E^{(I)} := \oplus_{i \in I} E$ for any index set $I$.

\begin{lemma}\label{lem:HomXfXg}
Let $I$ and $J$ be two index sets. Then for all $\varphi \in \Hom_\cC(E^{(I)},E^{(J)})$, the following square is commutative:
\begin{equation*}
  \xymatrix{
  E^{(I)} \ar[d]_{z_{E^{(I)}}}\ar[rr]^\varphi && E^{(J)} \ar[d]^{z_{E^{(J)}}} \\
  E^{(I)} \ar[rr]_\varphi  && E^{(J)}
  }
\end{equation*}
\end{lemma}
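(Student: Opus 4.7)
The plan is to use the matrix map $\mu$ from \eqref{f:mu}, which the text notes is injective, to reduce the commutativity of the square to the identity $\varphi_{ij} \circ z = z \circ \varphi_{ij}$ in $\End_\cC(E)$, which holds by the assumption $z \in Z(\End_\cC(E))$.

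First I would record the definition of $z_{E^{(I)}} = \oplus_i z$ precisely. Writing $\iota_i : E \to E^{(I)}$ and $\pi_j : E^{(J)} \to E$ for the canonical inclusions and (biproduct) projections (so that $\pi_k \circ \iota_j = \delta_{jk} \cdot \id_E$), the defining property of $\oplus_i z$ is
\[
z_{E^{(I)}} \circ \iota_i = \iota_i \circ z \qquad\text{for every } i \in I.
\]
A short computation using $\pi_k \iota_j = \delta_{jk}$ then shows the dual identity
\[
\pi_j \circ z_{E^{(J)}} = z \circ \pi_j \qquad\text{for every } j \in J,
\]
since both sides have the same composition with every $\iota_k : E \to E^{(J)}$.

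Next, I would apply the injective matrix map $\mu$ of \eqref{f:mu} to both morphisms $\varphi \circ z_{E^{(I)}}$ and $z_{E^{(J)}} \circ \varphi$ in $\Hom_\cC(E^{(I)}, E^{(J)})$ (taking all $X_i = E$ and all $Y_j = E$). Using the two identities above and the definition $\varphi_{ij} := \pi_j \varphi \iota_i \in \End_\cC(E)$, the matrix entries compute as
\[
\bigl(\varphi \circ z_{E^{(I)}}\bigr)_{ij} \;=\; \pi_j \varphi\, z_{E^{(I)}} \iota_i \;=\; \pi_j \varphi \iota_i\, z \;=\; \varphi_{ij} \circ z,
\]
\[
\bigl(z_{E^{(J)}} \circ \varphi\bigr)_{ij} \;=\; \pi_j\, z_{E^{(J)}} \varphi \iota_i \;=\; z\, \pi_j \varphi \iota_i \;=\; z \circ \varphi_{ij}.
\]

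Finally, since $\varphi_{ij} \in \End_\cC(E)$ and $z \in Z(\End_\cC(E))$, we have $\varphi_{ij} \circ z = z \circ \varphi_{ij}$ for all $(i,j)$, so the matrices $\mu(\varphi \circ z_{E^{(I)}})$ and $\mu(z_{E^{(J)}} \circ \varphi)$ coincide. Injectivity of $\mu$ then yields $\varphi \circ z_{E^{(I)}} = z_{E^{(J)}} \circ \varphi$, which is exactly the commutativity of the square. The only conceptually nontrivial point is the availability of the injective matrix map $\mu$ for arbitrary (possibly infinite) index sets $I, J$, but this has already been established in the excerpt together with the footnote that $\oplus_j E \hookrightarrow \prod_j E$ is a monomorphism in any Grothendieck category.
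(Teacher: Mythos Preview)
Your proof is correct and follows essentially the same route as the paper: reduce the equality $z_{E^{(J)}}\varphi = \varphi z_{E^{(I)}}$ to an equality of matrix entries via the injective map $\mu$, then use the identities $z_{E^{(I)}}\iota_i = \iota_i z$ and $\pi_j z_{E^{(J)}} = z\pi_j$ together with $z \in Z(\End_\cC(E))$. The only difference is that you spell out the derivation of $\pi_j z_{E^{(J)}} = z\pi_j$ from the coproduct universal property, whereas the paper simply asserts both identities as coming from the definition of $z_{E^{(J)}} = \oplus_j z$.
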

\begin{proof}
For $i \in I, j\in J$ and let $\iota_i : E  \hookrightarrow E^{(I)}$ and $\pi_j : E^{(J)} \twoheadrightarrow E$ be the inclusion into the $i$th summand and the projection onto the $j$th summand, respectively. By the definitions of $z_{E^{(I)}}$ and $z_{E^{(J)}}$, we have $\pi_j z_{E^{(J)}}  = z \pi_j$ and $\iota_i z = z_{E^{(I)}} \iota_i$. Let $v := \pi_j \varphi \iota_i \in \End_\cC(E)$; then $zv = vz$ because $z \in Z(\End_\cC(E))$. Therefore
\begin{equation*}
  \pi_j z_{E^{(J)}} \varphi \iota_i = z \pi_j \varphi \iota_i = z v = v z
   =  \pi_j \varphi \iota_i z = \pi_j \varphi z_{E^{(I)}} \iota_i \ .
\end{equation*}
Hence $z_{E^{(J)}} \varphi = \varphi z_{E^{(I)}}$ by the injectivity of $\mu$.
\end{proof}

Now, given an arbitrary object $M$ in $\cC$, we can find, by our definition of a good cogenerator, an exact sequence of the form
\begin{equation}\label{eq:copres}
  0 \to M \xrightarrow{\;\eta\;} E^{(I)} \to E^{(J)} \ ,
\end{equation}
and we can define $z_M : M \to M$ to be the unique homomorphism in $\cC$ which makes the following diagram commutative:
\begin{equation*}
  \xymatrix{
  0 \ar[r] & M \ar[r] \ar[d]_{z_M} & E^{(I)} \ar[r]\ar[d]_{z_{E^{(I)}}} & E^{(J)} \ar[d]_{z_{E^{(J)}}} \\
  0 \ar[r] & M \ar[r] & E^{(I)} \ar[r] & E^{(J)}
  }
\end{equation*}

\begin{lemma}
  $z_M$  does not depend on the choice of the exact sequence \eqref{eq:copres}.
\end{lemma}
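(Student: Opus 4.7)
The plan is to compare any two resolutions \eqref{eq:copres} of $M$ directly, using the injectivity of the terms $E^{(I)}$ and Lemma \ref{lem:HomXfXg}. Concretely, suppose
\[
0 \to M \xrightarrow{\;\eta\;} E^{(I)} \to E^{(J)} \qquad\text{and}\qquad 0 \to M \xrightarrow{\;\eta'\;} E^{(I')} \to E^{(J')}
\]
are two exact sequences of the given form, and let $z_M, z_M' \in \End_\cC(M)$ be the two homomorphisms produced by the recipe. I want to show $z_M = z_M'$.

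The key step is to use that $E^{(I')}$ is injective (being a direct sum of injectives in the locally noetherian category $\cC$, by \cite{Gab} Prop.\ 6 on p.\ 387). Since $\eta : M \hookrightarrow E^{(I)}$ is a monomorphism, injectivity of $E^{(I')}$ produces a morphism $\varphi : E^{(I)} \to E^{(I')}$ with $\varphi \circ \eta = \eta'$. Now apply Lemma \ref{lem:HomXfXg} to this $\varphi$: the square
\[
  \xymatrix{
  E^{(I)} \ar[d]_{z_{E^{(I)}}}\ar[rr]^\varphi && E^{(I')} \ar[d]^{z_{E^{(I')}}} \\
  E^{(I)} \ar[rr]_\varphi  && E^{(I')}
  }
\]
commutes. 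Combining this with the two defining squares for $z_M$ and $z_M'$ gives
\[
\eta' \circ z_M \;=\; \varphi \circ \eta \circ z_M \;=\; \varphi \circ z_{E^{(I)}} \circ \eta \;=\; z_{E^{(I')}} \circ \varphi \circ \eta \;=\; z_{E^{(I')}} \circ \eta' \;=\; \eta' \circ z_M' \, .
\]
Since $\eta'$ is a monomorphism, we conclude $z_M = z_M'$.

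There is no real obstacle: the only subtlety is that we need the comparison map $\varphi$ to exist, which is exactly why $E$ is required to be injective (hence $E^{(I')}$ is injective). Once $\varphi$ is in hand, the equality follows from a straightforward diagram chase using Lemma \ref{lem:HomXfXg} and the monicity of $\eta'$. Note that the second part of the exact sequence \eqref{eq:copres} is not used in this independence argument; it was used only in the original construction of $z_M$ to guarantee that $z_{E^{(I)}} \circ \eta$ factors through the kernel of $E^{(I)} \to E^{(J)}$, namely through $\eta(M)$.
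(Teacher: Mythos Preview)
Your proof is correct and is essentially identical to the paper's own argument: both lift $\eta'$ along $\eta$ using injectivity of $E^{(I')}$ to obtain a comparison map (your $\varphi$, the paper's $\sigma$), then apply Lemma \ref{lem:HomXfXg} and the same chain of equalities $\eta' z_M = \varphi \eta z_M = \varphi z_{E^{(I)}} \eta = z_{E^{(I')}} \varphi \eta = z_{E^{(I')}} \eta' = \eta' z_M'$ before cancelling the monomorphism $\eta'$. Your closing observation that the tail $E^{(I)} \to E^{(J)}$ plays no role in the independence step is accurate and a nice clarification.
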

\begin{proof}
Suppose that $0 \to M \xrightarrow{\;\eta'\;} E^{(I')} \to E^{(J')}$ is another exact sequence which gives rise to $z'_M : M \to M$; then since $E^{(I')}$ is injective, we can find a homomorphism $\sigma : E^{(I)} \to E^{(I')}$ which makes
\begin{equation*}
  \xymatrix{
  0 \ar[r] & M \ar[r]^\eta \ar[dr]_{\eta'} & E^{(I)} \ar@{..>}[d]^\sigma \\
  & & E^{(I')}
  }
\end{equation*}
commutative. Now consider the following diagram:
\begin{equation*}
  \xymatrix{
   E^{(I)} \ar[rrrr]^{z_{E^{(I)}}} \ar[dddd]_\sigma &&&& E^{(I)} \ar[dddd]^\sigma \\
    & M \ar[ul]^{\eta} \ar@{=}[dd] \ar[rr]^{z_M} && M \ar[ur]_\eta\ar@{=}[dd] & \\
     & & 0\ar[ur]\ar[ul]\ar[dr]\ar[dl] & & \\ & M \ar[rr]_{z'_M} \ar[dl]_{\eta'}& & M \ar[dr]^{\eta'}& \\
      E^{(I')} \ar[rrrr]_{z_{E^{(I')}}} &&&& E^{(I')} .
       }
\end{equation*}
The outer square is commutative by Lemma \ref{lem:HomXfXg}. The two trapezia on the sides commute by definition of $\sigma$. The two trapezia on the top and bottom commute by the definition of $z_M$ and $z'_M$, respectively. Chasing this diagram, we find that
\begin{equation*}
  \eta' z_M = \sigma \eta z_M = \sigma z_{E^{(I)}} \eta = z_{E^{(I')}} \sigma \eta = z_{E^{(I')}} \eta' = \eta' z'_M \ .
\end{equation*}
Since $\eta'$ is a monomorphism, we conclude that $z_M = z'_M$ as required.
\end{proof}

\begin{lemma}\label{lem:CentInCat}
Let $\theta : M \to M'$ be a homomorphism in $\cC$. Then $z_{M'} \theta = \theta z_M$.
\end{lemma}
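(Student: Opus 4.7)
The plan is to mimic the strategy already used in the independence-of-presentation lemma just above: lift $\theta$ to a map between chosen copresentations by injective envelopes formed from $E$, then use Lemma \ref{lem:HomXfXg} to slide $z$ across the lift, and finally use a monomorphism to cancel.

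More concretely, I would first fix exact sequences
\begin{equation*}
0 \to M \xrightarrow{\eta} E^{(I)} \to E^{(J)}  \qquad\text{and}\qquad  0 \to M' \xrightarrow{\eta'} E^{(I')} \to E^{(J')}
\end{equation*}
of the form \eqref{eq:copres}, which exist because $E$ is a good cogenerator (Lemma \ref{lem:good-co}); these are the sequences used to define $z_M$ and $z_{M'}$. Because $E^{(I')}$ is injective and $\eta$ is a monomorphism, the composite $\eta' \theta : M \to E^{(I')}$ extends along $\eta$ to a morphism $\tilde\theta : E^{(I)} \to E^{(I')}$ with $\tilde\theta \, \eta = \eta' \, \theta$.

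Next I would apply Lemma \ref{lem:HomXfXg} to $\tilde\theta$, which gives $z_{E^{(I')}} \, \tilde\theta = \tilde\theta \, z_{E^{(I)}}$. Combining this identity with the two defining squares for $z_M$ and $z_{M'}$, one chases:
\begin{equation*}
\eta'\, z_{M'}\, \theta = z_{E^{(I')}}\, \eta'\, \theta = z_{E^{(I')}}\, \tilde\theta\, \eta = \tilde\theta\, z_{E^{(I)}}\, \eta = \tilde\theta\, \eta\, z_M = \eta'\, \theta\, z_M .
\end{equation*}
Since $\eta'$ is a monomorphism, we may cancel it on the left and conclude $z_{M'}\, \theta = \theta\, z_M$, as required.

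No step here looks genuinely hard; the only thing to check is that the lift $\tilde\theta$ exists, which is immediate from the injectivity of a direct sum of copies of $E$ (recall $E$ is injective and $\cC$ is locally noetherian, so arbitrary direct sums of injectives are again injective by \cite{Gab} Prop.\ 6 on p.\ 387). Everything else is formal diagram chasing using the already established Lemma \ref{lem:HomXfXg}.
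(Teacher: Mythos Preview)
Your proof is correct and essentially identical to the paper's: both lift $\theta$ to a map $E^{(I)} \to E^{(I')}$ using injectivity, apply Lemma \ref{lem:HomXfXg} to commute $z$ past the lift, and cancel the monomorphism $\eta'$. The only cosmetic difference is that the paper calls the lift $\sigma$ rather than $\tilde\theta$ and packages the chase inside a single large diagram.
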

\begin{proof}
Choose exact sequences $0 \to M \xrightarrow{\;\eta\;} E^{(I)} \to E^{(J)}$ and $0 \to M' \xrightarrow{\;\eta'\;} E^{(I')} \to E^{(J')}$. Using the injectivity of $E^{(I')}$ together with Lemma \ref{lem:HomXfXg}, we can find a similar commutative diagram
\begin{equation*}
  \xymatrix{
    E^{(I)} \ar[rrrr]^{z_{E^{(I)}}} \ar[dddd]_\sigma &&&& E^{(I)} \ar[dddd]^\sigma \\
    & M \ar[ul]^{\eta} \ar[dd]_\theta \ar[rr]^{z_M} && M \ar[ur]_\eta\ar[dd]^\theta & \\
    & & 0\ar[ur]\ar[ul]\ar[dr]\ar[dl] & & \\ & M' \ar[rr]_{z_{M'}} \ar[dl]_{\eta'}& & M' \ar[dr]^{\eta'}& \\
    E^{(I')} \ar[rrrr]_{z_{E^{(I')}}} &&&& E^{(I')}.
     }
\end{equation*}
Chasing this diagram, we similarly find that
\begin{equation*}
  \eta' \theta z_M = \sigma \eta z_M = \sigma z_{E^{(I)}} \eta = z_{E^{(I')}} \sigma \eta = z_{E^{(I')}} \eta'\theta  = \eta' z_{M'} \theta \ .
\end{equation*}
Because $\eta'$ is a monomorphism, $\theta z_M = z_{M'} \theta$ as required.
\end{proof}

\begin{proof}[Proof of Theorem \ref{thm:BerCent}]
Let $z \in Z(\End_\cC(E))$ and define $\psi(z) := (z_M)_{M \in \ob(\cC)}$ as constructed above. Then Lemma \ref{lem:CentInCat} shows that $\psi(z) \in Z(\cC)$.  By construction we have $\ev_E (\psi(z)) = z$. Therefore $\ev_E$ is surjective.

For the injectivity let now $z \in Z(\cC)$ be such that $z_E = 0$. For any homomorphism $f : U \rightarrow E$ we have $f \circ z_U = z_E \circ f = 0$. But the cogenerator property means that the functor $\Hom_\cC(-,E)$ is faithful. It follows that $z_U = 0$.
\end{proof}

\subsection{The sheaf property}

We will proceed by comparing the presheaf $\cZ_\cC$ with another presheaf defined as follows. Recall that we have fixed representatives of the isomorphism classes $x = [E_x] \in \mathbf{Sp}(\cC)$. For every subset $A$ of $\mathbf{Sp}(\cC)$, we define
\begin{equation*}
  \cF(A) := \{ z = (z_x)_x \in \prod_{x \in A} Z(\End_\cC(E_x)) : z_y v = v z_x \ \text{for all $x, y \in A$ and $v \in \Hom_\cC(E_x, E_y)$}\} .
\end{equation*}
Evidently, $\cF$ forms a presheaf on $\mathbf{Sp}(\cC)$ for the discrete topology.

\begin{lemma}\label{lem:Fsheaf}
$\cF$ is a sheaf for the stable topology of $\mathbf{Sp}(\cC)$.
\end{lemma}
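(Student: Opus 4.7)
The plan is to verify the two sheaf axioms (separation and gluing) for an arbitrary covering $A = \bigcup_{i \in I} A_i$ of a stable open subset $A \subseteq \mathbf{Sp}(\cC)$ by stable open subsets $A_i$. Since an element of $\cF(A)$ is nothing more than a tuple $(z_x)_{x \in A}$ satisfying a point-wise commutation constraint, and since every $x \in A$ lies in some $A_i$, separation is essentially tautological: two sections of $\cF(A)$ that agree on each $A_i$ have the same entry at every $x \in A$ and hence coincide.

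For gluing, given compatible local sections $z^{(i)} = (z^{(i)}_x)_{x \in A_i} \in \cF(A_i)$ with $z^{(i)}_x = z^{(j)}_x$ whenever $x \in A_i \cap A_j$, the obvious candidate for a global section is $z = (z_x)_{x \in A}$ defined by $z_x := z^{(i)}_x$ for any $i$ with $x \in A_i$; well-definedness is precisely the overlap compatibility. The only non-routine step is to check the defining condition of $\cF(A)$, namely $z_y v = v z_x$ for all $x, y \in A$ and every $v \in \Hom_\cC(E_x, E_y)$. When $v = 0$ this is immediate, so one may assume $v \neq 0$.

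This is where stability enters, and it is the crux of the argument. Pick $j \in I$ with $y \in A_j$. The key input is Lemma \ref{lem:stable-2}, which characterizes stable subsets as exactly those closed under ``pulling back'' along non-zero morphisms between indecomposable injectives: the non-zero morphism $v : E_x \to E_y$ with $[E_y] \in A_j$ forces $[E_x] \in A_j$. Thus $x$ and $y$ both lie in the single stable member $A_j$ of the cover, and the identity $z_y v = v z_x$ then follows directly from $z^{(j)} \in \cF(A_j)$ together with the definition of $z$. I do not anticipate a real obstacle; the argument is a clean reduction of the global sheaf condition to the local ones, driven entirely by the point-wise characterization of stability given by Lemma \ref{lem:stable-2}.
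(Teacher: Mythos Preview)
Your proposal is correct and follows essentially the same argument as the paper's proof: both define the glued section pointwise, reduce to checking the commutation relation for a nonzero $v \in \Hom_\cC(E_x,E_y)$, and then invoke Lemma~\ref{lem:stable-2} to place both $x$ and $y$ in a single stable member of the cover. The only minor addition in your write-up is the explicit (and indeed tautological) verification of the separation axiom, which the paper leaves implicit.
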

\begin{proof}
Suppose that $\{A(i)\}_{i \in I}$ is a covering of some stable subset $A$ of $\mathbf{Sp}(\cC)$ by stable subsets $A(i)$, and let $z(i) \in \cF(A(i))$ be given for each $i \in I$, such that $z(i)_{|A(i) \cap A(j)} = z(j)_{|A(i) \cap A(j)}$ for each $i,j \in I$. Then for each $x \in A$, we may unambiguously define $z_x := z(i)_x \in Z(\End_\cC(E_x))$ for any index $i \in I$ such that $x \in A(i)$; this gives us a vector $z \in \prod_{x \in A} Z(\End_\cC(E_x))$ such that $z_{|A(i)} = z(i)$ for all $i \in I$. We must show that $z \in \cF(A)$. To this end, let $x, y \in A$ and let $v \in \Hom_\cC(E_x, E_y)$; we must show that $z_y v = v z_x$.  If $v = 0$ there is nothing to do, so we may assume that $v\neq 0$. Now, $y \in A(i)$ for some $i \in I$; since $A(i)$ is stable, also $x \in A(i)$ by Lemma \ref{lem:stable-2}. But then $z_y v = v z_x$ since $x,y \in A(i)$ and $z_{|A(i)} = z(i) \in \cF(A(i))$.
\end{proof}

For any subset $A \subseteq \mathbf{Sp}(\cC)$ let
\begin{equation*}
  \cL_A := \ \text{localizing subcategory of $\cC$ cogenerated by the injective object $E_A := \oplus_{x \in A} E_x$}.
\end{equation*}
This means (cf.\ \cite{Ste} proof of Prop.\ VI.3.7) that
\begin{equation*}
  \cL_A \ \text{is the full subcategory of all $M \in \ob(\cC)$ such that $\Hom_\cC(M,E_A) = 0$}.
\end{equation*}

\begin{remark}\label{rem:L}
   Suppose that $A = A(\cL)$ for some localizing subcategory $\cL$ of $\cC$; then $\cL_A = \cL$. For this we recall from the beginning of this section that
\begin{equation*}
  A(\cL) = \ \{x \in \mathbf{Sp}(\cC) : \Hom_\cC(M,E_x) = 0 \ \text{for any $M \in \ob(\cL)$}\}
\end{equation*}
and
\begin{align*}
  \ob(\cL) & = \{ M \in \ob(\cC) : \Hom_\cC(M,E_x) = 0 \ \text{for all $x \in A(\cL)$}\}   \\
         & = \{ M \in \ob(\cC) : \Hom_\cC(M, \prod_{x \in A(\cL)} E_x) = 0 \} .
\end{align*}
Obviously, if $\Hom_\cC(M, \prod_{x \in A(\cL)} E_x) = 0$ then $\Hom_\cC(M, \oplus_{x \in A(\cL)} E_x) = 0$ as well. Vice versa, suppose that $\Hom_\cC(M, \oplus_{x \in A(\cL)} E_x) = 0$. Assuming that $\Hom_\cC(M, \prod_{x \in A(\cL)} E_x) \neq 0$ we find an $y \in A(\cL)$ such that $\Hom_\cC(M, E_y) \neq 0$, which is a contradiction. We conclude that
\begin{equation*}
   \ob(\cL) = \{ M \in \ob(\cC) : \Hom_\cC(M, \oplus_{x \in A(\cL)} E_x) = 0 \}  = \ob(\cL_{A(\cL)}) \ .
\end{equation*}
\end{remark}

Also recall from Lemma \ref{lem:stable-2} that $\cL_A$ is stable if and only if $A$ is stable.

For any localizing subcategory $\cL$ of $\cC$ we let $q_\cL : \cC \rightarrow \cC/\cL$ denote the quotient functor and $s_\cL$ its right adjoint section functor. Recall that $s_\cL$ is fully faithful.

\begin{lemma}\label{lem:Lclosed}
Let $\cL$ be a stable localizing subcategory of $\cC$. Then the quotient functor $q_{\cL}$ restricts to an equivalence of categories between the full subcategory of all injective objects of $\cC$ which have no non-zero subobject contained in $\cL$ and the full subcategory of all injective objects of $\cC/\cL$.
\end{lemma}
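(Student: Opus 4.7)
The plan is to show that the quotient functor $q_\cL$ and the section functor $s_\cL$ restrict to mutually quasi-inverse equivalences between the two full subcategories. I will verify in turn that each of these restrictions is well-defined, and then check that the adjunction unit and counit restrict to natural isomorphisms.

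First, I would check that $q_\cL$ sends an injective $\cL$-torsion-free object of $\cC$ to an injective object of $\cC/\cL$: this is immediate from Lemma \ref{lem:stable-T-inj}, since $\cL$ is assumed to be stable. Second, I would check that $s_\cL$ sends an injective object of $\cC/\cL$ to an injective $\cL$-torsion-free object of $\cC$. Injectivity of $s_\cL(X)$ is part of Lemma \ref{lem:quotGroth}(3). For the $\cL$-torsion-free property, if $V \hookrightarrow s_\cL(X)$ is a subobject with $V \in \cL$, then $\Hom_\cC(V, s_\cL(X)) = \Hom_{\cC/\cL}(q_\cL(V), X) = \Hom_{\cC/\cL}(0, X) = 0$ by the adjunction together with the fact that $q_\cL$ kills $\cL$, so the inclusion $V \hookrightarrow s_\cL(X)$ is zero and hence $V = 0$.

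Third, the natural isomorphism $q_\cL \circ s_\cL \cong \id_{\cC/\cL}$ provided by Lemma \ref{lem:quotGroth}(2) supplies the counit of the desired equivalence. Fourth, and this is the only step where stability is used in an essential way beyond Lemma \ref{lem:stable-T-inj}, I would show that for $Y$ injective and $\cL$-torsion-free in $\cC$ the unit map $\eta_Y : Y \to s_\cL q_\cL(Y)$ is an isomorphism. By Prop.\ \ref{prop: Gab375Cor2}(2) there is an isomorphism $Y \cong t_\cL(Y) \oplus s_\cL q_\cL(Y)$; since $Y$ is $\cL$-torsion-free, $t_\cL(Y) = 0$, so it remains to identify this isomorphism with $\eta_Y$. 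The kernel of $\eta_Y$ always lies in $\cL$, hence is contained in $t_\cL(Y) = 0$, so $\eta_Y$ is a monomorphism; and applying $q_\cL$ turns it into the counit isomorphism $q_\cL(Y) \xrightarrow{\cong} q_\cL s_\cL q_\cL(Y)$, so the cokernel of $\eta_Y$ lies in $\cL$. As $s_\cL q_\cL(Y)$ is $\cL$-torsion-free by the second step, this cokernel must vanish, giving $\eta_Y$ an isomorphism.

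There is really no serious obstacle here: every ingredient has already been assembled in Lemma \ref{lem:quotGroth}, Lemma \ref{lem:stable-T-inj}, and Prop.\ \ref{prop: Gab375Cor2}. The only mildly delicate point is verifying that the isomorphism provided by Prop.\ \ref{prop: Gab375Cor2}(2) agrees with the canonical unit map, which is why I would prefer to argue directly, via kernel/cokernel considerations, that $\eta_Y$ itself is an isomorphism when $Y$ is injective and $\cL$-torsion-free.
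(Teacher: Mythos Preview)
Your overall strategy is sound and close in spirit to the paper's, but the very last step contains a slip. You argue that the cokernel $C$ of $\eta_Y : Y \hookrightarrow s_\cL q_\cL(Y)$ lies in $\cL$, and then conclude that $C = 0$ because $s_\cL q_\cL(Y)$ is $\cL$-torsion-free. This does not follow: $C$ is a \emph{quotient} of $s_\cL q_\cL(Y)$, and quotients of $\cL$-torsion-free objects can perfectly well lie in $\cL$. The repair is immediate and uses the one hypothesis you have not yet exploited at this point, namely the injectivity of $Y$: since $\eta_Y$ is a monomorphism and $Y$ is injective, the short exact sequence $0 \to Y \xrightarrow{\eta_Y} s_\cL q_\cL(Y) \to C \to 0$ splits, so $C$ is isomorphic to a direct summand (hence a subobject) of the $\cL$-torsion-free object $s_\cL q_\cL(Y)$; together with $C \in \cL$ this forces $C = 0$.

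For comparison, the paper avoids this direct unit/counit computation by invoking the notion of an $\cL$-\emph{closed} object: it cites \cite{Gab} Lemma~1 on p.~370 to observe that an injective object has no non-zero subobject in $\cL$ if and only if it is $\cL$-closed, and then uses the standard fact (\cite{Gab} Prop.~3(a) and Cor.\ on p.~371) that $q_\cL$ restricts to an equivalence between the $\cL$-closed objects of $\cC$ and all of $\cC/\cL$. This is exactly your argument packaged differently: the splitting step above is precisely what shows that an $\cL$-torsion-free injective is $\cL$-closed. Your hands-on approach has the advantage of being self-contained within the paper, whereas the paper's version is shorter but relies on an additional external reference.
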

\begin{proof}
By \cite{Gab} Lemma 1 on p.\ 370 an injective object of $\cC$ has no non-zero subobject contained in $\cL$ if and only if it is $\cL$-closed. We therefore consider the full subcategory $\cD$ of all $\cL$-closed objects of $\cC$. Then the restriction of $q_{\cL}$ to $\cD$ is an equivalence of categories with quasi-inverse $s_{\cL}$, by \cite{Gab} Prop.\ 3(a) and Cor.\ on p. 371. So it is enough to show that $q_\cL$ sends the $\cL$-closed injectives in $\cC$ to injectives in $\cC/\cL$, and that $s_{\cL}$ sends injectives in $\cC/\cL$ to the $\cL$-closed injectives in $\cC$.

Let $I$ be an $\cL$-closed injective object in $\cC$. Then because $\cL$ is stable, $q_{\cL} (I)$ is an injective object in $\cC/\cL$ by \cite{Gab} Cor.\ 3 on p.\ 375. Conversely, let $I$ be an injective object in $\cC/\cL$; then $s_{\cL} (I)$ is an injective object in $\cC$ since $q_{\cL}$ is an exact left adjoint to $s_{\cL}$, and furthermore $s_\cL (I)$ is $\cL$-closed by \cite{Gab} Lemma 2 on p.\ 371.
\end{proof}

The injective objects $E_x$, for $x \in A(\cL)$, by definition, and also the injective object $E_{A(\cL)}$, by Remark \ref{rem:L}, have no non-zero subobject contained in $\cL$. We now assume that $\cL$ is stable. It follows from \cite{Gab} last paragraph on p.\ 383 (using the stability of $\cL$) that the $[q_\cL (E_x)]$, for $x \in A(\cL)$, are precisely the elements of $\mathbf{Sp}(\cC/\cL)$, so that we choose these $q_\cL (E_x)$ as representatives. The functor $q_{\cL}$ is a left adjoint and therefore commutes with arbitrary direct sums; hence $q_{\cL}(E_{A(\cL)}) = \oplus_{x \in A(\cL)} \, q_{\cL} (E_x) = E_{\cC/\cL}^\oplus$. The Lemma \ref{lem:Lclosed} then implies that
\begin{equation*}
  E_{A(\cL)} \cong s_\cL(q_{\cL}(E_{A(\cL)})) = s_\cL(E_{\cC/\cL}^\oplus) \ .
\end{equation*}
But the functor $s_\cL$ is fully faithful. Therefore we obtain the following consequence.

\begin{corollary}\label{cor:Qiso}
Let $\cL$ be a stable localizing subcategory of $\cC$.  Then $q_{\cL}(E_{A(\cL)}) = E_{\cC/\cL}^\oplus$, and the functor $q_\cL$ induces an isomorphism of rings
\begin{equation*}
  \End_{\cC}( E_{A(\cL)}) \xrightarrow{\;\cong\;} \End_{\cC/\cL} ( E_{\cC/\cL}^\oplus) \ .
\end{equation*}
\end{corollary}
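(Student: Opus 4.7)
The plan is to string together the observations made in the paragraph immediately preceding the statement, which already supply the essential input.

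First, I would identify the object $q_\cL(E_{A(\cL)})$. By definition, every $E_x$ with $x \in A(\cL)$ has no non-zero subobject lying in $\cL$, and the same holds for $E_{A(\cL)} = \oplus_{x \in A(\cL)} E_x$ by Remark \ref{rem:L}. Under the stability hypothesis on $\cL$, Gabriel's description of $\mathbf{Sp}(\cC/\cL)$ (\cite{Gab}, last paragraph on p.\ 383) tells us that $[q_\cL(E_x)]$ for $x \in A(\cL)$ are precisely the isomorphism classes of indecomposable injective objects of $\cC/\cL$, so these may be chosen as the fixed representatives of $\mathbf{Sp}(\cC/\cL)$. Since $q_\cL$ is a left adjoint, it commutes with arbitrary direct sums, giving
\begin{equation*}
q_\cL(E_{A(\cL)}) \;=\; \bigoplus_{x \in A(\cL)} q_\cL(E_x) \;=\; E^{\oplus}_{\cC/\cL},
\end{equation*}
which is the first assertion.

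Next, I would deduce the isomorphism of endomorphism rings by invoking Lemma \ref{lem:Lclosed}. Because $E_{A(\cL)}$ is injective and has no non-zero subobject in $\cL$, it is $\cL$-closed (by \cite{Gab} Lemma 1 on p.\ 370, cited in the proof of Lemma \ref{lem:Lclosed}); thus it lies in the full subcategory on which $q_\cL$ restricts to an equivalence of categories with quasi-inverse $s_\cL$. This equivalence sends $E_{A(\cL)}$ to $E^{\oplus}_{\cC/\cL}$ and, being an equivalence, induces a bijection on $\Hom$'s. In particular, the ring homomorphism induced by $q_\cL$,
\begin{equation*}
\End_{\cC}(E_{A(\cL)}) \longrightarrow \End_{\cC/\cL}(E^{\oplus}_{\cC/\cL}),
\end{equation*}
is an isomorphism, as required.

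Neither step presents a real obstacle: once the good choice of representatives in $\mathbf{Sp}(\cC/\cL)$ is made and one has noted that $E_{A(\cL)}$ is $\cL$-closed, everything follows formally from Lemma \ref{lem:Lclosed} and the fact that left adjoints preserve coproducts. The only subtlety worth checking carefully is that the identification $q_\cL(E_{A(\cL)}) = E^{\oplus}_{\cC/\cL}$ is indeed compatible with the chosen representatives, which is why the paragraph before the corollary is careful to fix $q_\cL(E_x)$ as the representative of its isomorphism class.
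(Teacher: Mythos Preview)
Your proposal is correct and follows essentially the same route as the paper: the paragraph preceding the corollary already contains the proof, and you have reproduced its logic faithfully. The only cosmetic difference is that the paper phrases the final step via the full faithfulness of the section functor $s_\cL$ (using $E_{A(\cL)} \cong s_\cL(E_{\cC/\cL}^\oplus)$), whereas you invoke directly that $q_\cL$ restricted to $\cL$-closed injectives is an equivalence; these are two formulations of the same fact from Lemma \ref{lem:Lclosed}.
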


Note that for the same reason the functor $q_\cL$ also induces isomorphisms of rings
\begin{equation}\label{f:Qiso}
  \End_{\cC}( E_x) \xrightarrow{\;\cong\;} \End_{\cC/\cL} (q_\cL (E_x))  \qquad\text{for any $x \in A(\cL)$}.
\end{equation}

At this point we need a generalization of the well-known calculation of the center of a matrix ring $M_n(R)$ over some associative ring $R$: central elements in $M_n(R)$ are necessarily scalar matrices with entries in $Z(R)$.

\begin{lemma}\label{lem:GenCentre}
Let  $\{X_i: i \in I\}$ be a collection of objects in $\cC$, and let $X := \oplus_{i \in I} X_i$. Then
\begin{multline*}
  Z(\End_\cC(X)) =  \\
   \{ (z_i)_i \in \prod_{i \in I} Z(\End_\cC(X_i)) : z_j v = v z_i \ \text{for all $i, j \in I$ and $v \in \Hom_\cC(X_i,X_j)$} \}.
\end{multline*}
\end{lemma}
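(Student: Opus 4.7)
The plan is to exploit the injective matrix map
\[
\mu : \End_\cC(X) \hookrightarrow \prod_{i,j \in I} \Hom_\cC(X_i,X_j), \qquad \varphi \longmapsto (\varphi_{ij})_{i,j}, \quad \varphi_{ij} := \pi_j \varphi \iota_i
\]
from equation (f:mu) and perform a matrix-style calculation analogous to the classical computation $Z(M_n(R)) = Z(R) \cdot I_n$. Since $\mu$ is injective, an element of $\End_\cC(X)$ is determined by its matrix entries, and two morphisms agree iff all their entries agree.

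For the inclusion $\subseteq$, let $z \in Z(\End_\cC(X))$. I would test the centrality relation $z\varphi = \varphi z$ against three families of endomorphisms built from the summands. First, taking $\varphi = \iota_i\pi_i$ and comparing $(z\varphi)_{kj}$ with $(\varphi z)_{kj}$ yields $z_{ij} = 0$ whenever $i \neq j$, so $z$ is ``diagonal''. Second, taking $\varphi = \iota_i v \pi_i$ for an arbitrary $v \in \End_\cC(X_i)$ gives $z_{ii} v = v z_{ii}$, so each $z_i := z_{ii}$ lies in $Z(\End_\cC(X_i))$. Third, taking $\varphi = \iota_q v \pi_p$ for $v \in \Hom_\cC(X_p,X_q)$ yields the intertwining identity $z_q v = v z_p$.

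For the reverse inclusion $\supseteq$, start with a family $(z_i)_i$ satisfying the intertwining condition and use the universal property of the direct sum $X = \oplus_i X_i$ to produce the unique $z \in \End_\cC(X)$ with $z \iota_i = \iota_i z_i$ for every $i$, equivalently with matrix entries $z_{ij} = \delta_{ij} z_i$. A short check using the universal property shows the dual relation $\pi_j z = z_j \pi_j$. Then for any $\varphi \in \End_\cC(X)$,
\[
(z\varphi)_{ij} = \pi_j z \varphi \iota_i = z_j \pi_j \varphi \iota_i = z_j \varphi_{ij}, \qquad (\varphi z)_{ij} = \pi_j \varphi z \iota_i = \pi_j \varphi \iota_i z_i = \varphi_{ij} z_i,
\]
and these are equal by the intertwining hypothesis applied to $\varphi_{ij} \in \Hom_\cC(X_i,X_j)$. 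Injectivity of $\mu$ then upgrades this to $z\varphi = \varphi z$, so $z \in Z(\End_\cC(X))$. Finally, it is immediate that the two constructions are mutually inverse (one uses injectivity of $\mu$ to verify that the ``diagonalization'' of a central $z$ reproduces $z$).

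The only subtle point is that, unlike in the case of finite direct sums, $\mu$ is in general not surjective, so one cannot freely identify $\End_\cC(X)$ with a matrix ring. This is what makes the argument pass through the three specific test endomorphisms $\iota_i\pi_i$, $\iota_i v \pi_i$ and $\iota_q v \pi_p$ — which do lie in $\End_\cC(X)$ — rather than manipulating an abstract ``matrix'' for $z$. The reconstruction step is harmless because a ``diagonal'' family $(z_i)$ always assembles into a genuine morphism of $X$ via the universal property of the direct sum, independently of any convergence issues.
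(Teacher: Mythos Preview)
Your proof is correct and follows essentially the same approach as the paper. The only cosmetic difference is that the paper tests centrality against the single family $\iota_j v \pi_i$ for $v \in \Hom_\cC(X_i,X_j)$ and extracts one master identity $\delta_{i\ell}\,\alpha_{jk} v = \delta_{kj}\, v\,\alpha_{\ell i}$, from which diagonality, centrality of the diagonal entries, and the intertwining relation are all read off by specialising the indices; you instead break this into three separate families (the first two of which are special cases of your third), but the content and the use of the injectivity of $\mu$ are identical, and your reverse inclusion matches the paper's verbatim.
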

\begin{proof}
Recall the injective map $\mu$ defined in \eqref{f:mu}. Suppose that $\alpha \in Z(\End_\cC(X))$. Take some $v \in \Hom_\cC(X_i,X_j)$ and take any $k, \ell \in I$. Then $\pi_k \alpha (\iota_j v \pi_i) \iota_\ell = \pi_k (\iota_j v \pi_i) \alpha \iota_\ell$; since $\pi_i \iota_\ell = \delta_{i\ell} \id_{X_\ell}$ we deduce that
\begin{equation}\label{ijkl}
   \delta_{i\ell} \alpha_{jk } v = \delta_{kj} v \alpha_{\ell i} \quad \mbox{for all}\quad i,j,k,\ell \in I, v \in \Hom_\cC(X_i,X_j) \ .
   \end{equation}
Suppose that $j \neq k$. Take $\ell := i := j$ and $v := \id_{X_j}$ in (\ref{ijkl}) to deduce that $\alpha_{jk} = 0$ whenever $j\neq k$; thus the matrix $\mu(\alpha)$ of any $\alpha \in Z(\End_\cC(X))$ is diagonal. Fixing $i,j \in I$ and taking $\ell :=i$ and $k :=j$ in (\ref{ijkl}) shows that $\alpha_{jj} v = v \alpha_{ii}$ holds for all $v \in \Hom_\cC(X_i,X_j)$. Taking $i = j$ in this last equation shows that $\alpha_{ii} \in Z(\End_\cC(X_i))$ for all $i \in I$. This shows that the map
\begin{equation*}
  Z(\End_\cC(X)) \rightarrow \{ (z_i)_i \in \prod_{i \in I} Z(\End_\cC(X_i)) : z_j v = v z_i \ \text{for all $v \in \Hom_\cC(X_i,X_j)$} \}
\end{equation*}
that sends $\alpha \in Z(\End_\cC(X))$ to the vector $(\alpha_{ii})$ is well-defined; it is furthermore injective, because $\mu(\alpha)$ is diagonal and because $\mu$ is injective.

To show that the map is surjective, take any $(z_i)_i \in \prod_{i \in I} Z(\End_\cC(X_i))$ satisfying the given condition and define $\alpha := \oplus_i z_i$, an element of $\End_\cC(X)$. We must show that $\alpha$ is central. Note that $\pi_j \alpha = z_j \pi_j$ and $\alpha \iota_i = \iota_i z_i$ for all $i,j \in I$, by the definition of $\alpha$. Now if $\beta \in \End_\cC(X)$ and $i,j \in I$, then applying the condition on the $z_i$'s with $v := \pi_j \beta \iota_i \in \Hom_\cC(X_i,X_j)$, we have
\begin{equation*}
  \pi_j \alpha \beta \iota_i = z_j \pi_j \beta \iota_i = z_j v = v z_i = \pi_j \beta\iota_i z_i = \pi_j \beta\alpha\iota_i \ .
\end{equation*}
Hence $\mu(\alpha \beta) = \mu(\beta\alpha)$ for all $\beta \in \End_\cC(X)$. Since $\mu$ is injective, we conclude that $\alpha$ is central in $\End_\cC(X)$ as claimed.
\end{proof}

We let $\alpha_A : Z(\End_\cC(E_A)) \xrightarrow{\cong} \cF(A)$ be the isomorphism given by the above Lemma \ref{lem:GenCentre}.

\begin{proposition}\label{prop:FtoZmap}
Let $A \subseteq B$ be two stable subsets of $\mathbf{Sp}(\cC)$. Then there is a commutative diagram of commutative rings
\begin{equation}\label{eq:FtoZ}
\xymatrix{
\cF(B) \ar[d]_{\res} & Z( \End_{\cC} (E_B) ) \ar[l]_-{\alpha_B} \ar[r]^-{q_{\cL_B}} \ar[d]_\gamma & Z(\End_{\cC/\cL_B} (E_{\cC/\cL_B}^\oplus)) \ar[d]_{\overline{Q}}&& \cZ_\cC(B) \ar[ll]_-{\ev_{E_{\cC/\cL_B}^\oplus}} \ar[d]_{\res} \\
\cF(A) & Z( \End_{\cC} (E_A) ) \ar[l]^-{\alpha_A}  \ar[r]_-{q_{\cL_A}}& Z(\End_{\cC/\cL_A} (E_{\cC/\cL_A}^\oplus)) && \cZ_\cC(A)\ar[ll]^-{\ev_{E_{\cC/\cL_A}^\oplus}}.
}
\end{equation}
\end{proposition}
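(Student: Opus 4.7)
The plan is to define the vertical maps $\gamma$ and $\overline{Q}$ explicitly and then to verify the three squares separately, each commuting for a different reason. For $\gamma$, I decompose $E_B = E_A \oplus E_{B\setminus A}$ with $E_{B\setminus A} := \oplus_{x\in B\setminus A} E_x$, and let $\iota_A : E_A \hookrightarrow E_B$ and $\pi_A : E_B \twoheadrightarrow E_A$ be the canonical inclusion and projection. I set $\gamma(z) := \pi_A z \iota_A$. By Lemma \ref{lem:GenCentre}, any $z \in Z(\End_\cC(E_B))$ is diagonal, i.e. $z = \oplus_{x\in B} z_x$ with $\alpha_B(z) = (z_x)_{x\in B}$; consequently $\gamma(z) = \oplus_{x\in A} z_x$, which shows at once that $\gamma(z)$ lies in $Z(\End_\cC(E_A))$ (again by Lemma \ref{lem:GenCentre}), that $\gamma$ is a ring homomorphism, and that $\alpha_A \circ \gamma = \res \circ \alpha_B$. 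This is commutativity of the left square.

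For $\overline{Q}$, I observe that $A \subseteq B$ together with the inclusion-reversing nature of $A \mapsto \cL_A$ forces $\cL_B \subseteq \cL_A$, so by Remark \ref{rem:iso} there is a quotient functor $P : \cC/\cL_B \to \cC/\cL_A$ satisfying $P \circ q_{\cL_B} = q_{\cL_A}$. The key observation, and the step I expect to be the main obstacle, is to identify $P(E^\oplus_{\cC/\cL_B})$ with $E^\oplus_{\cC/\cL_A}$: for any $x \in B\setminus A$, stability of $\cL_A$ combined with Corollary \ref{cor:stable} forces $E_x \in \ob(\cL_A)$, whence $q_{\cL_A}(E_x) = 0$, and therefore $P(E^\oplus_{\cC/\cL_B}) = \oplus_{x\in B} q_{\cL_A}(E_x) = \oplus_{x\in A} q_{\cL_A}(E_x) = E^\oplus_{\cC/\cL_A}$. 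I then define $\overline{Q}$ to be the map induced by $P$ on endomorphisms of these objects. Writing $z = \oplus_{x\in B} z_x$ as before, we obtain $\overline{Q}(q_{\cL_B}(z)) = P(q_{\cL_B}(z)) = q_{\cL_A}(z) = \oplus_{x\in A} q_{\cL_A}(z_x) = q_{\cL_A}(\gamma(z))$, which is the middle square.

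For the right square, I use that the map $\res : \cZ_\cC(B) \to \cZ_\cC(A)$ is by definition the ring homomorphism on centers induced by $P$ (cf. the beginning of \S 2); equivalently, $P(w_M) = (\res w)_{P(M)}$ for every $w \in Z(\cC/\cL_B)$ and every object $M$ of $\cC/\cL_B$. Specialising to $M = E^\oplus_{\cC/\cL_B}$ and using $P(E^\oplus_{\cC/\cL_B}) = E^\oplus_{\cC/\cL_A}$ immediately gives $\overline{Q}(\ev_{E^\oplus_{\cC/\cL_B}}(w)) = \ev_{E^\oplus_{\cC/\cL_A}}(\res w)$. This commutativity of the right square also confirms, via Theorem \ref{thm:BerCent}, that $\overline{Q}$ takes central elements to central elements, since the right-hand side lies in $Z(\End_{\cC/\cL_A}(E^\oplus_{\cC/\cL_A}))$ and $\ev_{E^\oplus_{\cC/\cL_B}}$ is surjective onto $Z(\End_{\cC/\cL_B}(E^\oplus_{\cC/\cL_B}))$.
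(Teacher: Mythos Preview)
Your argument is correct and follows essentially the same route as the paper: define $\gamma$ via the inclusion/projection of $E_A$ in $E_B$, identify the quotient functor $P$ with $\overline{Q}\circ q_{\cL_B}=q_{\cL_A}$, and use stability of $\cL_A$ to kill $q_{\cL_A}(E_x)$ for $x\in B\setminus A$ so that $P(E^{\oplus}_{\cC/\cL_B})=E^{\oplus}_{\cC/\cL_A}$. The only differences are cosmetic: you separate the functor $P$ from the induced map $\overline{Q}$ on endomorphism rings (the paper overloads the symbol $\overline{Q}$ for both), and you add a few words explaining why $\gamma$ is a ring map and why $\overline{Q}$ preserves centers, which the paper leaves implicit.
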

\begin{proof}
Note that $B = A(\cL_B)$ and $A = A(\cL_A)$ by Remark \ref{rem:L}. We let $\iota : E_A \hookrightarrow E_B$ and $\pi : E_B \twoheadrightarrow E_B$ be the natural inclusion and projection maps, and define $\gamma(\varphi) := \pi \circ \varphi \circ \iota$. Then the first square commutes by the definitions of $\alpha_B$ and $\alpha_A$.  Let $\overline{Q} : \cC/\cL_B \to \cC/\cL_A$ be the quotient functor, so that $\overline{Q} \circ q_{\cL_B} = q_{\cL_A}$. Then
\begin{equation*}
  \overline{Q} (E_{\cC/\cL_B}^\oplus) = \overline{Q} q_{\cL_B} (E_B) = \bigoplus_{x \in B} q_{\cL_A} (E_x) = \bigoplus\limits_{y \in A} q_{\cL_A} (E_y) = q_{\cL_A} (E_A) = E_{\cC/\cL_A}^\oplus \ .
\end{equation*}
Here the outer equalities come from Cor.\ \ref{cor:Qiso}. The middle equality holds true because $q_{\cL_A} (E_x) = 0$ for any $x \in B \setminus A$: such an $x$ does not lie in $A = A(\cL_A)$ as noted at the beginning of the proof, so the indecomposable injective $E_x$ lies in the stable subcategory $\cL_A$ by Lemma \ref{lem:stable-1} and hence $q_{\cL_A} (E_x) = 0$. We now see that the middle square commutes. Finally, the square on the right commutes because $\overline{Q} (E_{\cC/\cL_B}^\oplus) = E_{\cC/\cL_A}^\oplus$ as we saw above.
\end{proof}

\begin{proof}[Proof of Theorem \ref{thm:sheaf}]
Let $A$ be a stable subset of $\mathbf{Sp}(\cC)$. By Cor.\ \ref{cor:Qiso}, the morphism $q_{\cL_A}$ appearing in the diagram $(\ref{eq:FtoZ})$ is an isomorphism.  Therefore the composite map
\begin{equation*}
  \psi_A := \alpha_A \circ q^{-1}_{\cL_A} \circ \ev_{E_{\cC/\cL_A}^\oplus} :\cZ_\cC(A) \xrightarrow{\;\cong\;} \cF(A)
\end{equation*}
is an isomorphism of commutative rings by Lemma \ref{lem:GenCentre} and Thm.\ \ref{thm:BerCent}. Prop.\ \ref{prop:FtoZmap} now tells us that these maps $\psi_A$ commute with the restriction maps in the presheaves $\cZ_\cC$ and $\cF$ and therefore combine to give an isomorphism of presheaves $\psi : \cZ_\cC \xrightarrow{\;\cong\;} \cF$. However $\cF$ is a sheaf by Lemma \ref{lem:Fsheaf}, so $\cZ_\cC$ must be a sheaf as well.
\end{proof}

\section{Other cases}\label{sec:other}

There are two more cases where $\mathbf{Sp}(\cC)$ can be used for an alternative interpretation of the central sheaf $Z_\cC$.

\subsection{The locally finitely presented case}\label{subsec:loc-fp}

Throughout this section we make the weaker assumption that $\cC$ is locally finitely presented. This means that the Grothendieck category has a set of finitely presented generators. Then any object of $\cC$ is a filtered colimit of finitely presented objects.

Recall from Remark \ref{rem:loc-noeth} that in the locally noetherian case all section functors commute with filtered colimits. This is no longer the case for a general locally finitely presented $\cC$. Therefore one restricts attention to the localizing subcategories $\cL$ which are of finite type. By definition this means that the corresponding section functor commutes filtered colimits whose transition maps are monomorphisms.

\begin{remark}\label{rem:loc-fp}
   For any localizing subcategory $\cL$ of finite type in $\cC$, the quotient category $\cC/\cL$ is, in general, only locally finitely generated.
\end{remark}
\begin{proof}
\cite{Ga0} Thm.\ 5.8 (compare also Prop.\ 5.9).
\end{proof}

\begin{proposition}\phantomsection\label{prop:locfp-classific}
\begin{itemize}
  \item[i.] There is a topology on $\mathbf{Sp}(\cC)$ such that
\begin{align*}
  \text{collection of all localizing}\ \quad & \xrightarrow{\;\simeq\;} \text{set of all closed} \\
  \text{subcategories of finite type of $\cC$} &       \qquad             \text{subsets of $\mathbf{Sp}(\cC)$}  \\
                          \cL & \longmapsto A(\cL)       \nonumber
\end{align*}
is an inclusion reversing bijection.
  \item[ii.] For any localizing subcategories of finite type $\cL_1$, $\cL_2$, and $\{\cL_i\}_{i \in I}$ of $\cC$ we have:
\begin{itemize}
  \item[a)] $\cL_1 \cap \cL_2$ and $\bigvee_{i \in I} \cL_i$ are of finite type;
  \item[b)] $A(\cL_1 \cap \cL_2) = A(\cL_1) \cup A(\cL_2)$ and $A(\bigvee_{i \in I} \cL_i) = \bigcap_{i \in I} A(\cL_i)$.
\end{itemize}
\end{itemize}
\end{proposition}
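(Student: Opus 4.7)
The plan is to reduce part (i) to the assertions of part (ii), and then to address (ii) through a direct analysis of torsion functors and the lattice structure. The main technical obstacle is the stability of the finite-type condition under arbitrary suprema.

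I would first dispatch (ii)(b), which in fact holds with no finite-type hypothesis. Both equalities have one direction by monotonicity of $\cL \mapsto A(\cL)$. For the reverse of $A(\bigvee_i \cL_i) \supseteq \bigcap_i A(\cL_i)$, observe that for each $[E] \in \mathbf{Sp}(\cC)$ the class $\{M \in \ob(\cC) : \Hom_\cC(M,E) = 0\}$ is closed under subobjects (by injectivity of $E$), quotients, extensions, and direct sums, hence is a localizing subcategory of $\cC$; if it contains each $\cL_i$ then it contains $\bigvee_i \cL_i$. For the reverse of $A(\cL_1 \cap \cL_2) \subseteq A(\cL_1) \cup A(\cL_2)$, I would use the uniformity of indecomposable injectives: if $[E] \notin A(\cL_1) \cup A(\cL_2)$, then non-zero maps from some $M_i \in \cL_i$ to $E$ produce non-zero subobjects $N_i \subseteq E$ with $N_i \in \cL_i$, and uniformity forces $N_1 \cap N_2 \neq 0$, which lies in $\cL_1 \cap \cL_2$ and gives a non-zero homomorphism $N_1 \cap N_2 \hookrightarrow E$, contradicting $[E] \in A(\cL_1 \cap \cL_2)$.

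Next I would turn to (ii)(a). For $\cL_1 \cap \cL_2$ the argument is short: $t_{\cL_1 \cap \cL_2}(M)$ coincides with the intersection of subobjects $t_{\cL_1}(M) \cap t_{\cL_2}(M)$, and the intersection of subobjects in a Grothendieck category commutes with directed colimits of monomorphisms. Since each $t_{\cL_i}$ commutes with such colimits by the finite-type hypothesis, so does $t_{\cL_1 \cap \cL_2}$, yielding the finite-type property for $\cL_1 \cap \cL_2$. For $\bigvee_i \cL_i$ the natural route is via the characterization (due in various forms to Krause, Herzog, and Garkusha--Prest) that a localizing subcategory of a locally finitely presented Grothendieck category is of finite type if and only if it is generated, as a localizing subcategory, by its finitely presented objects. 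Granting this, $\bigvee_i \cL_i$ is generated by the union of the finitely presented objects of the $\cL_i$, and hence is of finite type.

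Part (i) is then a formal consequence. The subsets $A(\cL)$ of $\mathbf{Sp}(\cC)$, with $\cL$ ranging over finite-type localizing subcategories, contain $\emptyset = A(\cC)$ and $\mathbf{Sp}(\cC) = A(0)$; by (ii)(a) combined with (ii)(b) they are closed under finite unions and arbitrary intersections, and hence form the closed sets of a topology on $\mathbf{Sp}(\cC)$. The inclusion-reversing bijection $\cL \mapsto A(\cL)$ rests on the reconstruction principle $\ob(\cL) = \{M \in \ob(\cC) : \Hom_\cC(M,E) = 0 \text{ for all } [E] \in A(\cL)\}$, valid for finite-type $\cL$, which is itself a consequence of the finitely-presented-generation characterization invoked above together with the existence of enough indecomposable injectives in the Gabriel quotient $\cC/\cL$ (which is locally finitely generated by Remark~\ref{rem:loc-fp}). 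The principal obstacle I anticipate is precisely this characterization of finite type in terms of finitely presented generators: it forms the technical core of the theory and requires machinery beyond what the excerpt has developed.
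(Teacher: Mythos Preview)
The paper's own proof is a bare citation: parts (i) and (ii)(a) are attributed to \cite{Gar} Thm.\ 11 and its proof, and (ii)(b) is then said to follow immediately (from the order-reversing bijection of (i) together with the closure properties of (ii)(a)). Your proposal unpacks more than the paper does and is sound where it is self-contained: the direct verification of (ii)(b) via uniformity of indecomposable injectives and the observation that $\{M : \Hom_\cC(M,E)=0\}$ is localizing is correct and needs no finite-type hypothesis; the torsion-functor argument $t_{\cL_1\cap\cL_2} = t_{\cL_1}\cap t_{\cL_2}$ combined with AB5 cleanly gives closure of finite-type under binary intersection. The logical order is inverted relative to the paper --- you go (ii)(b) then (ii)(a) then (i), whereas the paper cites (i) and (ii)(a) and reads off (ii)(b) --- but both routes bottom out at the same external ingredient, namely the characterisation of finite-type localizing subcategories of a locally finitely presented category as those generated by finitely presented objects of $\cC$. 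You correctly flag this as the crux: it is needed both for closure under arbitrary joins in (ii)(a) and for the injectivity of $\cL\mapsto A(\cL)$ in (i), and it is precisely the content of the result in \cite{Gar} that the paper invokes. So your route is not genuinely more elementary; it is a more explicit repackaging of the same citation.
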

\begin{proof}
For i. and ii.a) see \cite{Gar} Thm.\ 11 and its proof. ii.b) then follows immediately.
\end{proof}

\begin{corollary}\label{cor:fpft-stable}
 Finite unions and intersections of stable closed subsets of $\mathbf{Sp}(\cC)$ are stable.
\end{corollary}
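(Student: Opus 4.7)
The plan is to straightforwardly combine the classification result in Prop.~\ref{prop:locfp-classific} with the stability results already established for $\cap$ and $\vee$ in Section~\ref{sec:stability}. By definition a stable closed subset of $\mathbf{Sp}(\cC)$ is of the form $A(\cL)$ for some localizing subcategory $\cL$ that is simultaneously stable and of finite type.

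So suppose $A_1 = A(\cL_1)$ and $A_2 = A(\cL_2)$ with each $\cL_i$ stable and of finite type. First I would invoke Prop.~\ref{prop:locfp-classific}.ii.b) to rewrite
\begin{equation*}
A_1 \cup A_2 = A(\cL_1 \cap \cL_2) \qquad\text{and}\qquad A_1 \cap A_2 = A(\cL_1 \vee \cL_2),
\end{equation*}
and Prop.~\ref{prop:locfp-classific}.ii.a) to see that both $\cL_1 \cap \cL_2$ and $\cL_1 \vee \cL_2$ remain of finite type, hence both right-hand sides are closed subsets of $\mathbf{Sp}(\cC)$.

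It then only remains to check that both $\cL_1 \cap \cL_2$ and $\cL_1 \vee \cL_2$ are stable. The intersection is immediate from the definition: an essential extension of an object in $\cL_1 \cap \cL_2$ is an essential extension of an object in each $\cL_i$ and therefore lies in each $\cL_i$, hence in $\cL_1 \cap \cL_2$. For the join, I would simply quote Prop.~\ref{prop:vee-stable}, which says precisely that $\cL_1 \vee \cL_2$ is stable whenever $\cL_1$ and $\cL_2$ are. Combining these observations yields that $A_1 \cup A_2$ and $A_1 \cap A_2$ are of the form $A(\cL)$ for $\cL$ stable and of finite type, which is the assertion. No real obstacle arises; the entire argument is essentially bookkeeping applying previously proved results to the locally finitely presented setting.
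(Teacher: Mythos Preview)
Your proposal is correct and follows essentially the same approach as the paper, which simply cites Prop.~\ref{prop:locfp-classific}.ii together with Prop.~\ref{prop:vee-stable}; you have merely unpacked the one-line reference into its constituent steps (including the observation, already noted at the start of \S\ref{sec:stability}, that intersections of stable localizing subcategories are trivially stable).
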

\begin{proof}
The above Prop.\ \ref{prop:locfp-classific}.ii together with Prop.\ \ref{prop:vee-stable}.
\end{proof}

\begin{corollary}
The presheaf $A \mapsto Z(\cC/\cL_A)$ on $\mathbf{Sp}(\cC)$ has the sheaf property for finite coverings of stable closed subsets by stable closed subsets.
\end{corollary}
\begin{proof}
This follows now directly from Prop.\ \ref{prop:finite-sheaf}.
\end{proof}

That in the locally noetherian case we could prove the sheaf property for infinite coverings as well relied very much on the fact that in this case any injective object is a direct sum of indecomposables. This fails for any $\cC$ of a more general kind. But one of the basic reasons that Prop.\ \ref{prop:locfp-classific} works is that a locally finitely generated category is cogenerated by its indecomposable injective objects in the sense of \cite{Kra} Lemma 3.1.

We also mention that the more restrictive case where $\cC$ is locally coherent was treated in \cite{Her} and \cite{Kra}. In this situation the section functor of a localizing subcategory of finite type even commutes with filtered colimits and the corresponding quotient category again is locally coherent (\cite{Ga0} Thm.\ 5.14).

\subsubsection{A basic case}\label{ssubsec:ModG}

An important class of Grothendieck categories where Prop.\ \ref{prop:locfp-classific} and its corollaries apply is the following. Let $G$ be a locally profinite group and $k$ be a field. We let $\Mod(G)$ denote the category of smooth $G$-representations in $k$-vector spaces. (Recall that a $G$-representation $V$ is called smooth if the stabilizer of any vector in $V$ is open in $G$.) This obviously is an (AB5) abelian category.

\begin{lemma}
  Let $\mathfrak{U}$ be a fundamental system of compact open subgroups of $G$. Then the representations $k[G/U]$, for $U \in \mathfrak{U}$, form a set of generators of $\Mod(G)$.
\end{lemma}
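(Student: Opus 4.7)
The plan is to use the standard criterion that a family $\{X_i\}_{i \in I}$ is a set of generators of a Grothendieck category $\cA$ if and only if every object admits an epimorphism from a small coproduct of objects in the family, or equivalently, for every proper subobject $W \subsetneq V$ there exists $i \in I$ and a morphism $X_i \to V$ whose image is not contained in $W$. First I would verify, using the fact that $k[G/U]$ is the free $k$-module on the coset space $G/U$ with $G$ acting by left translation, that for any smooth $G$-representation $V$ there is a natural adjunction-type bijection
\begin{equation*}
\Hom_G(k[G/U], V) \xrightarrow{\;\cong\;} V^U, \qquad \varphi \longmapsto \varphi(eU),
\end{equation*}
since a $G$-equivariant map out of $k[G/U]$ is determined by the image of the coset of the identity, and that image can be any $U$-fixed vector.

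Next I would combine this with smoothness: if $V \in \Mod(G)$ and $v \in V$, then by definition the stabilizer $\Stab_G(v)$ is open in $G$, so it contains a compact open subgroup, and therefore contains some $U \in \mathfrak{U}$ by the hypothesis that $\mathfrak{U}$ is a fundamental system. Hence $v \in V^U$, and by the bijection above there is a $G$-equivariant map $\varphi_v : k[G/U] \to V$ with $\varphi_v(eU) = v$. Given a proper subobject $W \subsetneq V$, pick $v \in V \setminus W$; the corresponding $\varphi_v$ has image containing $v$, so its image is not contained in $W$. This is the generator criterion.

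Equivalently, and more concretely, for each $v \in V$ choose $U(v) \in \mathfrak{U}$ as above; then the collection of maps $\varphi_v : k[G/U(v)] \to V$ assembles into a morphism
\begin{equation*}
\bigoplus_{v \in V} k[G/U(v)] \longrightarrow V
\end{equation*}
which is surjective because its image contains every $v \in V$. Since the indexing family $\mathfrak{U}$ is a \emph{set}, the family $\{k[G/U]\}_{U \in \mathfrak{U}}$ is indeed a small generating set. There is no real obstacle here; the only point that requires care is invoking the definition of smoothness (each vector has an open stabilizer) together with the fundamental-system property to replace an arbitrary open stabilizer by one from $\mathfrak{U}$, and checking that $k[G/U]$ really represents the $U$-invariants functor on $\Mod(G)$.
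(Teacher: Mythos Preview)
Your proof is correct and follows essentially the same approach as the paper: both hinge on the identification $\Hom_{\Mod(G)}(k[G/U],V) \cong V^U$ together with smoothness to find, for any given vector, some $U \in \mathfrak{U}$ fixing it. The only cosmetic difference is the generator criterion chosen: the paper verifies that the functors $\Hom(k[G/U],-)$ are jointly faithful by showing that any non-zero map $\alpha : V \to V'$ induces a non-zero map on $U$-invariants for suitable $U$, whereas you verify the equivalent condition that every proper subobject is avoided by some map from a $k[G/U]$ (equivalently, every object is a quotient of a coproduct of such).
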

\begin{proof}
Let $\alpha : V \rightarrow V'$ be a non-zero map and choose a $v \in V$ such that $\alpha(v) \neq 0$ as well as a $U \in \mathfrak{U}$ which fixes $v$. Then the map $\alpha^U : V^U \rightarrow V'^U$ on $U$-fixed vectors is non-zero. It remains to note that $\Hom_{\Mod(G)}(k[G/U],-) = (-)^U$.
\end{proof}

It follows that $\Mod(G)$ is a Grothendieck category. It clearly is locally of finite type.

\begin{lemma}
  Let $U \subseteq G$ be a compact open subgroup and $M$ be a finite dimensional representation in $\Mod(U)$. Then the compact induction $\ind_U^G(M)$ is finitely presented in $\Mod(G)$.
\end{lemma}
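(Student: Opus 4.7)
The plan is to use Frobenius reciprocity to reduce the statement to showing that $M$ itself is finitely presented in $\Mod(U)$, and then exploit the smoothness and finite-dimensionality of $M$ to produce a finite presentation of $M$ coming from a finite quotient of $U$.

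First I would invoke Frobenius reciprocity. Since $U$ is open in $G$, the compact induction $\ind_U^G$ is left adjoint to the restriction functor $(-)|_U \colon \Mod(G) \to \Mod(U)$, so for any $V$ in $\Mod(G)$ there is a natural isomorphism
\begin{equation*}
\Hom_{\Mod(G)}(\ind_U^G(M), V) \cong \Hom_{\Mod(U)}(M, V|_U) \ .
\end{equation*}
The restriction functor commutes with filtered colimits: in both $\Mod(G)$ and $\Mod(U)$ filtered colimits are computed on the underlying $k$-vector spaces (smoothness passes to filtered colimits), and restriction is the identity on underlying vector spaces. Therefore it suffices to show that $M$ is finitely presented in $\Mod(U)$.

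For this, since $M$ is a finite-dimensional smooth $U$-representation, there is an open subgroup $U_0 \leq U$ stabilizing a $k$-basis of $M$, hence acting trivially on all of $M$. Because $U$ is compact, $U_0$ has finite index in $U$, so $U' := \bigcap_{u \in U} u U_0 u^{-1}$ is a finite intersection of open subgroups, hence open and normal in $U$, and still acts trivially on $M$. Thus the $U$-action on $M$ factors through the finite quotient $U/U'$, and $M$ is a finite-dimensional module over the finite-dimensional $k$-algebra $k[U/U']$. In particular there is a presentation
\begin{equation*}
k[U/U']^m \longrightarrow k[U/U']^n \longrightarrow M \longrightarrow 0
\end{equation*}
of $M$ as a $k[U/U']$-module, which remains exact when viewed in $\Mod(U)$ (with $U'$ acting trivially on $k[U/U']$). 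The generator $k[U/U']$ is finitely presented in $\Mod(U)$ because $\Hom_{\Mod(U)}(k[U/U'], V) = V^{U'}$ and $U'$-invariants commute with filtered colimits of $k$-vector spaces. Hence $M$ is a cokernel of a morphism between finitely presented objects of the Grothendieck category $\Mod(U)$, and so is itself finitely presented.

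The only technical points that need care are (a) that restriction $\Mod(G) \to \Mod(U)$ commutes with filtered colimits, and (b) that the class of finitely presented objects in a Grothendieck category is closed under cokernels of morphisms between them; both are standard, the latter following from a short diagram chase using left exactness of $\Hom$ and exactness of filtered colimits.
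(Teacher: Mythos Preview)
Your argument is correct. Both you and the paper begin with Frobenius reciprocity $\Hom_{\Mod(G)}(\ind_U^G M, V) \cong \Hom_{\Mod(U)}(M, V)$, and both ultimately rest on the fact that $(-)^{U'}$ commutes with filtered colimits for a compact open subgroup $U'$. The difference is organisational: the paper proceeds directly, rewriting $\Hom_{\Mod(U)}(M,V) = (\Hom_k(M,k) \otimes_k V)^U$ using the finite-dimensionality of $M$, and then observes that both $M^\ast \otimes_k (-)$ and $(-)^U$ commute with filtered colimits, so the composite does. You instead reduce to showing that $M$ is finitely presented in $\Mod(U)$ and exhibit an explicit presentation by copies of $k[U/U']$ for a normal open $U'$ acting trivially on $M$. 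Your route is a little longer but perhaps more transparent, as it makes visible why finite-dimensionality is needed (to force the action through a finite quotient) and produces an actual presentation; the paper's route is slicker but packages the same ingredients into a single functorial identity.
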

\begin{proof}
 We have
\begin{equation*}
  \Hom_{\Mod(G)}(\ind_U^G(M),V) = \Hom_{\Mod(U)}(M,V) = (\Hom_k(M,k) \otimes_k V)^U \ .
\end{equation*}
Now use \cite{Pop} \textbf{Thm.}\ 3.5.10 observing that the functor $(-)^U$ of $U$-invariants commutes with filtered colimits.
\end{proof}

\begin{proposition}
  The category $\Mod(G)$ is locally finitely presented.
\end{proposition}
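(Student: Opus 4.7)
The plan is to simply combine the two preceding lemmas. Recall that a Grothendieck category is locally finitely presented if and only if it admits a set of finitely presented generators, so it suffices to exhibit such a set.

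First I would fix a fundamental system $\mathfrak{U}$ of compact open subgroups of $G$ and observe that for any $U \in \mathfrak{U}$, we have $k[G/U] \cong \ind_U^G(k)$, where $k$ denotes the trivial one-dimensional representation of $U$. Since $k$ is a finite-dimensional representation of the compact open subgroup $U$, the previous lemma tells us that $\ind_U^G(k) = k[G/U]$ is finitely presented in $\Mod(G)$.

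Next I would invoke the first of the two preceding lemmas, which asserts that the collection $\{k[G/U]\}_{U \in \mathfrak{U}}$ is a set of generators of $\Mod(G)$. Combining these two observations yields a set of finitely presented generators, so $\Mod(G)$ is locally finitely presented, as required.

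There is no serious obstacle here: the content is entirely packaged in the two preceding lemmas, and the only point worth flagging is the identification $k[G/U] = \ind_U^G(k)$ needed to apply the finite presentation statement to the given generators.
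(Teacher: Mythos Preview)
Your proof is correct and matches the paper's approach exactly: the paper simply says the result is immediate from the two preceding lemmas, and you have correctly spelled out the one detail that links them, namely the identification $k[G/U] \cong \ind_U^G(k)$.
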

\begin{proof}
  This is immediate from the above two lemmas.
\end{proof}

\begin{remark}
  In \cite{Sho} it is shown that for $G = SL_2(F)$ with $F/\mathbb{Q}_p$ any finite extension and $k$ a finite field of characteristic $p$ the category $\Mod(G)$ is locally coherent. Recently this has been extended to $G = GL_2(F)$ in \cite{Tim}.
\end{remark}

\subsection{The locally coirreducible cases}\label{subsec:loc-ci}

Throughout this section we assume that $\cC$ is locally coirreducible. For the definition we refer to \cite{Pop} p.\ 330. Any locally noetherian $\cC$ and, more generally, any $\cC$ which has a Krull dimension in the sense of \cite{Gab} p.\ 383 is locally coirreducible by \cite{Pop} Thm.\ 5.5.5. In contrast, a locally coherent category need not be locally coirreducible.

\begin{lemma}\label{rem:loc-coirr}
   For any localizing subcategory $\cL$ of $\cC$ the categories $\cL$ and $\cC/\cL$ are locally coirreducible as well.
\end{lemma}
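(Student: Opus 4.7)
Recall that an object $U \neq 0$ is coirreducible (= uniform) when any two non-zero subobjects of $U$ have non-zero intersection, and that $\cC$ being locally coirreducible means every non-zero object of $\cC$ contains a non-zero coirreducible subobject. The plan is to check this property separately for $\cL$ and $\cC/\cL$, exploiting in each case that the relevant subobject lattices can be related in a controlled way to subobject lattices in $\cC$.

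The case of $\cL$ is essentially formal. Since $\cL$ is a localizing subcategory, it is closed under subobjects in $\cC$, so for any object $X \in \cL$ the subobject lattice computed inside $\cL$ coincides with the one computed inside $\cC$. Consequently, the property of being coirreducible is the same whether tested in $\cL$ or in $\cC$, and any non-zero object $X \in \cL$ inherits from $\cC$ a non-zero coirreducible subobject, which automatically lies in $\cL$.

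The case of $\cC/\cL$ is the substantive step. Let $X$ be a non-zero object of $\cC/\cL$. Write $X = T(Y)$ for some $Y \in \cC$, and replace $Y$ by $Y/t_\cL(Y)$, which leaves $T(Y)$ unchanged and makes $Y$ $\cL$-torsion-free (in particular non-zero). By hypothesis, $Y$ contains a non-zero coirreducible subobject $U$. As a subobject of the $\cL$-torsion-free object $Y$, the object $U$ is itself $\cL$-torsion-free; since $U \neq 0$, this forces $U \notin \cL$, so $T(U)$ is a non-zero subobject of $T(Y) = X$. I claim $T(U)$ is coirreducible in $\cC/\cL$. Any subobject of $T(U)$ is of the form $T(V)$ for some subobject $V \subseteq U$ in $\cC$ (standard consequence of the quotient construction, e.g.\ \cite{Gab} p.\ 371--375). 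Given two non-zero subobjects $A = T(V_1)$, $B = T(V_2)$ of $T(U)$, the $V_i$ are non-zero subobjects of $U$, hence $V_1 \cap V_2 \neq 0$ by coirreducibility of $U$. Moreover $V_1 \cap V_2 \subseteq U$ is $\cL$-torsion-free, so $V_1 \cap V_2 \notin \cL$, and using the exactness of $T$ we conclude
\begin{equation*}
A \cap B = T(V_1) \cap T(V_2) = T(V_1 \cap V_2) \neq 0.
\end{equation*}
Thus $T(U)$ is a non-zero coirreducible subobject of $X$, proving that $\cC/\cL$ is locally coirreducible.

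The only delicate point is the lifting of subobjects of $T(U)$ to subobjects of $U$ and the compatibility of intersection with the quotient functor $T$; both are standard from Gabriel's quotient construction once $U$ is arranged to be $\cL$-torsion-free, which is why the reduction to $Y/t_\cL(Y)$ at the start of the $\cC/\cL$ argument is essential.
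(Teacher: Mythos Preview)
Your argument is correct. The paper does not give a proof at all but simply cites \cite{Pop} Prop.\ 5.3.6, so you are supplying the details that the paper defers to the literature. One small imprecision: the displayed equality $T(V_1) \cap T(V_2) = T(V_1 \cap V_2)$ need not hold for arbitrary lifts $V_1, V_2 \subseteq U$; what is always true is the inclusion $T(V_1 \cap V_2) \subseteq T(V_1) \cap T(V_2)$, and since you have already shown $T(V_1 \cap V_2) \neq 0$ this inclusion suffices for the conclusion $A \cap B \neq 0$. (Alternatively, choose the lifts $V_i$ to be $\cL$-saturated in $U$, i.e.\ with $U/V_i$ torsion-free for $\cL$; then the equality does hold because $V \mapsto T(V)$ is a lattice isomorphism from $\cL$-saturated subobjects of $U$ onto subobjects of $T(U)$.)
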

\begin{proof}
\cite{Pop} Prop.\ 5.3.6.
\end{proof}

For the convenience of the reader we first recall the following elementary concept. Let $S$ be any set. A \textit{topological closure operator} on $S$ is a selfmap $A \mapsto \overline{A}$ on the power set $\mathfrak{P}(S)$ satisfying:
\begin{enumerate}
  \item $\overline{\emptyset} = \emptyset$,
  \item $A \subseteq \overline{A}$ (in particular, $\overline{X} = X$),
  \item $\overline{\overline{A}} = \overline{A}$,
  \item $\overline{A \cup B} = \overline{A} \cup \overline{B}$.
\end{enumerate}
Note that $(4)$ implies
\begin{enumerate}
  \item[(5)] $A \subseteq B \Longrightarrow \overline{A} \subseteq \overline{B}$.
\end{enumerate}
A subset $A \subset S$ is called \textit{closed} if $\overline{A} = A$.

Consider an arbitrary family $\{A_i\}_{i \in I}$ of closed subsets and put $A := \bigcap_{i \in I} A_i$. Then
\begin{equation*}
  A \subseteq \overline{A} \subseteq \bigcap_{i \in I} \overline{A_i} = \bigcap_{i \in I} A_i = A \ .
\end{equation*}
It follows that $A$ is closed. This shows that there is a unique topology on $S$ whose closed subsets are the closed subsets in the above sense.

For any localizing subcategory $\cL$ of $\cC$ we earlier defined the subset $A(\cL) \subseteq \mathbf{Sp}(\cC)$. Vice versa, for any subset $A \subseteq \mathbf{Sp}(\cC)$, we define
\begin{equation*}
  \cL_A := \ \text{localizing subcategory of all $Y$ such that $\Hom_\cC(Y,E) = 0$ for any $[E] \in A$}.
\end{equation*}

\begin{proposition}\phantomsection\label{prop:lc-closed}
\begin{itemize}
  \item[i.] $\cL = \cL_{A(\cL)}$ for any localizing subcategory $\cL$;
  \item[ii.] the map $A \mapsto A(\cL_A)$ is a topological closure operator on $\mathbf{Sp}(\cC)$.
\end{itemize}
\end{proposition}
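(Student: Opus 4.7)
The plan is to prove (i) first and then deduce (ii) from it together with the observation that any indecomposable injective object of $\cC$ is \emph{coirreducible} (any two non-zero subobjects meet non-trivially, because such an $E$ is the injective hull of each of its non-zero subobjects).

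For (i), the inclusion $\cL \subseteq \cL_{A(\cL)}$ is immediate from unwinding the definitions of $A(\cL)$ and $\cL_A$. For the converse I would argue contrapositively: given $Y \notin \cL$, produce some $[E] \in A(\cL)$ with $\Hom_\cC(Y,E) \neq 0$. Since $Y \notin \cL$, the quotient $Y' := Y/t_\cL(Y)$ is a non-zero $\cL$-torsion-free object. Local coirreducibility of $\cC$ (applied to $Y'$) furnishes a non-zero coirreducible subobject $X' \subseteq Y'$, whose injective hull $E := E(X')$ is therefore an indecomposable injective. I would then observe that $E$ is $\cL$-torsion-free: any non-zero subobject of $E$ meets the essential subobject $X'$ in a non-zero object, which then lies in $\cL$ only if it is zero, since $X' \subseteq Y'$ is $\cL$-torsion-free. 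Hence $[E] \in A(\cL)$. Finally, injectivity of $E$ extends $X' \hookrightarrow E$ to a morphism $Y' \to E$ whose restriction to $X'$ is the original inclusion, hence non-zero; precomposing with the epimorphism $Y \twoheadrightarrow Y'$ gives the required non-zero map $Y \to E$.

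For (ii) I would dispatch the axioms in turn. The conditions $\overline{\emptyset} = \emptyset$ and $A \subseteq \overline{A}$ are formal: $\cL_\emptyset = \cC$ (vacuous condition) and $A(\cC) = \emptyset$ because $\id_E \neq 0$ for any indecomposable injective $E$; the second axiom follows by directly inspecting the definitions. Idempotency $\overline{\overline A} = \overline A$ is the equality $A(\cL_{A(\cL_A)}) = A(\cL_A)$, which is exactly (i) applied to the localizing subcategory $\cL_A$.

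The only substantive axiom is $\overline{A \cup B} = \overline A \cup \overline B$. First I would note that $\cL_{A\cup B} = \cL_A \cap \cL_B$ directly from the definition, so the claim is $A(\cL_A \cap \cL_B) = A(\cL_A) \cup A(\cL_B)$. The inclusion $\supseteq$ is monotonicity of $A(-)$. For $\subseteq$ I would translate: $[E] \in A(\cL')$ iff $t_{\cL'}(E) = 0$, so I must show that an indecomposable injective $E$ that is neither $\cL_A$-torsion-free nor $\cL_B$-torsion-free fails to be $\cL_A \cap \cL_B$-torsion-free. Picking non-zero subobjects $V_A \subseteq E$ in $\cL_A$ and $V_B \subseteq E$ in $\cL_B$, the coirreducibility of $E$ forces $V_A \cap V_B \neq 0$; this intersection lies in $\cL_A \cap \cL_B$, giving $t_{\cL_A \cap \cL_B}(E) \neq 0$ as required. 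The main obstacle is (i): unlike in the locally noetherian setting, we cannot decompose an arbitrary injective into indecomposables, so the extraction of a single suitable $[E] \in A(\cL)$ hinges entirely on local coirreducibility of $\cC$ as the substitute mechanism.
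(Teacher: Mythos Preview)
Your proof is correct and more self-contained than the paper's, which largely delegates to Popescu's book. For (i) the paper first observes the easy equality $A(\cL_{A(\cL)}) = A(\cL)$ and then invokes \cite{Pop} Cor.\ 5.3.8 to conclude that two localizing subcategories with the same associated subset of $\mathbf{Sp}(\cC)$ must coincide; for (ii) it simply cites \cite{Pop} Cor.\ 5.3.9. You instead prove the reverse inclusion $\cL_{A(\cL)} \subseteq \cL$ directly, extracting from any $Y \notin \cL$ an indecomposable injective $E$ in $A(\cL)$ that receives a non-zero map from $Y$; this is precisely the content behind the cited result, and your use of local coirreducibility to produce a coirreducible subobject of $Y/t_\cL(Y)$ is exactly the mechanism that makes it work. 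Your treatment of the union axiom in (ii), via the coirreducibility of indecomposable injectives, is again the standard argument that \cite{Pop} packages. The paper's route is shorter on the page but opaque about where the hypothesis on $\cC$ is actually used; yours makes that explicit.
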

\begin{proof}
i. We obviously have
\begin{equation*}
  \cL \subseteq \cL_{A(\cL)}  \qquad\text{and}\qquad   A(\cL) \subseteq A(\cL_{A(\cL)}) \subseteq A(\cL) \ ,
\end{equation*}
hence
\begin{equation}\label{f:AL}
  A(\cL_{A(\cL)}) = A(\cL) \ .
\end{equation}
It follows from \cite{Gab} Cor.\ 2 on p.\ 375 that the section functor $\cC/\cL \rightarrow \cC$ induces a bijection $\mathbf{Sp}(\cC/\cL) \xrightarrow{\simeq} A(\cL)$. (Note that the above even holds for a general Grothendieck category.) Therefore \cite{Pop} Cor.\ 5.3.8 says that \eqref{f:AL} implies that $\cL = \cL_{A(\cL)}$.

ii. \cite{Pop} Cor.\ 5.3.9.
\end{proof}

We call a subset of $\mathbf{Sp}(\cC)$ closed if it is closed w.r.t.\ the closure operator in the above Prop.\ \ref{prop:lc-closed}. Then part i. of this proposition says that all subsets of the form $A(\cL)$ are closed and part ii. says that any closed subset $A = \overline{A} = A(\cL_A)$ comes from a localizing subcategory. It follows that
\begin{align*}
  \text{collection of all localizing subcategories of $\cC$} & \xrightarrow{\;\simeq\;} \text{set of all closed subsets of $\mathbf{Sp}(\cC)$} \\
  \cL & \longmapsto A(\cL)
\end{align*}
is a bijection. As in section \ref{subsec:loc-fp} we deduce

\begin{corollary}
The presheaf $A \mapsto Z(\cC/\cL_A)$ on $\mathbf{Sp}(\cC)$ has the sheaf property for finite coverings of stable closed subsets by stable closed subsets.
\end{corollary}

\end{document}